\normalfont\fontsize{14}{5}\bfseries}{\thesection}{1em}{}
\title{Corks, covers, and complex curves}
\author{Kyle Hayden} \address{Columbia University, New York, NY 10027} 
\email{hayden@math.columbia.edu}
\newcommand{\hl}[1]{#1}
\theoremstyle{plain}
\newtheorem{thm}{Theorem}[section]   \newtheorem{lem}[thm]{Lemma}
              \newtheorem{prop}[thm]{Proposition}
\newtheorem*{ques*}{Question}
\newtheorem*{thmcusp*}{Theorem B}
\newtheorem*{thm-plain}{Theorem}
\newtheorem{mainthm}{Theorem}[section]
\theoremstyle{definition}
\newtheorem{defn}[thm]{Definition}    
\newtheorem{rem}[thm]{Remark}       \newtheorem*{rem*}{Remark}             
\theoremstyle{remark}
\newtheorem{ex-main}[thm]{Example}
\newcommand{\cc}{\mathbb{C}}
\newcommand{\rr}{\mathbb{R}}
\newcommand{\zz}{\mathbb{Z}}
\newcommand{\dd}{\mathbb{D}}
\renewcommand{\S}{\textsection}
\newcommand{\kerncomma}{\kern-0.14em, \kern0.05em}
\begin{document}

\begin{abstract}
\vspace{-.125in}

We show that $\cc^2$ contains pairs of properly embedded, smooth complex curves that are isotopic through homeomorphisms but not diffeomorphisms of $\cc^2$. The construction is based on realizing corks as branched covers of holomorphic disks in the 4-ball. These disks can also be described using exotic factorizations of quasipositive braids.
\end{abstract}

\maketitle

\titleformat{\section}{\large\bfseries}{}{0pt}{\center \thesection.  }

\titlespacing{\section}{0pt}{*4}{*1.5}

\titleformat{\subsection}[runin]{\bfseries}{}{0pt}{\thesubsection \ \  }
\titlespacing{\subsection}{0pt}{*1}{*1.65}

\vspace{-.65in}

\section{Introduction}\label{sec:intro}
  
A pair of smooth surfaces in a 4-manifold are \emph{exotically knotted} if they are isotopic through  ambient homeomorphisms but not ambient diffeomorphisms.  
While many 4-manifolds contain a wealth of exotically knotted surfaces, examples in the simplest 4-manifolds  are more elusive. 
In particular, it remains a major open problem to determine   whether  $\rr^4$ contains exotically knotted pairs  of closed, orientable surfaces. In contrast, $\rr^4$ is known to contain exotic pairs of closed, nonorientable surfaces  \cite{fkv}. 

As with many difficult problems in 4-manifold topology, additional tools are available when we shift to the noncompact and relative settings. 
 In unpublished work from 1985, Freedman constructed a smooth, properly embedded copy of $\rr^2$ in $\rr^4$ that is topologically but not smoothly isotopic to the standard plane $\rr^2 \! \times \! 0$.  Gompf expanded on this example in \cite{gompf:menagerie} and provides an extensive investigation of exotic planes in $\rr^4$ in forthcoming work \cite{gompf:planes}. In the relative setting, there has been recent work exhibiting exotically knotted pairs of properly embedded, orientable surfaces with boundary in the 4-ball; see \cite{jmz:exotic,hayden:disks,hkkmps}. 

This paper sheds light on two
seemingly disparate aspects of this knotting problem: \linebreak the existence of exotically knotted pairs of  complex curves in $\cc^2$, and the development of techniques in the relative and noncompact settings that are better suited for adaptation to the closed setting.

\subsection{Exotically knotted complex curves in $\boldsymbol{\mathbf{C}^2}$.}  
Much remains unknown about the topology of  complex plane curves, especially non-algebraic curves. Nevertheless, the complex setting is generally known to impose significant rigidity. For example, in a closed, simply connected K\"ahler surface,  any two smooth, closed complex curves that are  homologous (which includes those that are topologically isotopic)  are smoothly isotopic through complex curves \cite[\S1]{fs:symplectic}.

Even in the noncompact setting of $\cc^2$,  an \emph{algebraic} curve  $V$ (i.e., the zero-locus of a polynomial) is largely governed by the curve's  ``link at infinity''\kerncomma an iterated torus link defined by $L_\infty=V \cap S^3_R$ for $R \gg 1$.  
Neumann showed that if $V$ is a generic algebraic curve or if $L_\infty$ is connected (i.e., a knot), then $V$ is isotopic to a standard Seifert surface for $L_\infty$ that has been extended cylindrically  out to infinity  in $\cc^2$ \cite{neumann}. This, for example, implies that there can be no exotically knotted algebraic curves in $\cc^2$ with the topological type of a once-punctured surface. (Moreover, we expect that there are no exotic pairs of algebraic curves in $\cc^2$ whatsoever.)

Despite the rigidity exhibited in the algebraic and closed settings, we show that the smooth and topological categories do \emph{not} coincide for general  complex curves in $\cc^2$.

\begin{mainthm}\label{thm:complex}
There are infinitely many pairs of proper holomorphic curves in $\cc^2$ that are isotopic through ambient
 homeomorphisms but not ambient diffeomorphisms. \end{mainthm}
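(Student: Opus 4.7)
My plan is to carry out the strategy suggested by the abstract: first construct exotic holomorphic disks in $B^4$ via cork branched covers, then promote these disks to proper holomorphic curves in $\cc^2$ while preserving the exotic distinction.

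First I would produce the exotic disks in $B^4$. Start with a cork $(W,\tau)$ embedded exotically in some 4-manifold, so that the boundary involution $\tau\colon\partial W \to \partial W$ extends topologically but not smoothly across $W$. Realize $W$ as the double branched cover of $B^4$ along a properly embedded holomorphic disk $D_0$ whose boundary is (the closure of) a quasipositive braid; equivalently, view $D_0$ as the slice disk assembled from one factorization of that braid into positive band generators. A second factorization, related to the first by the cork twist, yields a second disk $D_1 \subset B^4$ with the same boundary. Topologically, $D_0$ and $D_1$ are isotopic rel boundary since $\tau$ extends topologically over $W$, descending to an ambient homeomorphism of $B^4$ carrying $D_0$ to $D_1$; smoothly, any such rel-boundary isotopy would pull back under the branched cover to a diffeomorphism of $W$ realizing $\tau$, contradicting the cork hypothesis.

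Next I would extend the pair $(D_0,D_1)$ to a pair of proper holomorphic curves $(V_0,V_1)$ in $\cc^2$. The natural construction, consistent with the quasipositive braid perspective, is to cap each $D_i$ off at infinity by a standard holomorphic end modeled on the braid closure; concretely, attach a holomorphic half-cylinder built from the braid monodromy data, or more symmetrically identify $B^4$ with a compact Stein domain inside a larger holomorphic model and attach the straight conical extension of $\partial D_i$. The point is that $\partial D_i$ and its extension to infinity is determined by boundary data common to both disks, so the two extensions agree outside a compact set.

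Finally I would verify that $(V_0,V_1)$ remain exotically knotted in $\cc^2$. For topological isotopy, the homeomorphism of $B^4$ exchanging $D_0$ and $D_1$ fixes the boundary and so extends by the identity across the holomorphic end, giving an ambient homeomorphism of $\cc^2$ carrying $V_0$ to $V_1$. For smooth non-isotopy, take double branched covers of $(\cc^2,V_i)$: an ambient diffeomorphism of $\cc^2$ carrying $V_0$ to $V_1$ would lift to a diffeomorphism of the branched covers taking one cork region to the other, which is obstructed by the original cork-twist phenomenon (detected by Seiberg--Witten or Heegaard Floer invariants of the compact cork pieces). This last step is the main obstacle: one must argue that the exotic information localized in the corks survives passage to the noncompact branched covers; the cleanest route is to compactify just enough to apply the compact cork obstruction, using that $V_0$ and $V_1$ coincide outside a compact set. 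To obtain infinitely many pairs, I would run the construction through infinitely many inequivalent cork-disk inputs (for example, varying the branch-set knot type or connect-summing with a family of $V_i$ distinguished by a diffeomorphism invariant).
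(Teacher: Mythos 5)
Your overall scaffolding (corks as branched covers of braided holomorphic disks, extend into $\cc^2$, lift diffeomorphisms to branched covers) matches the paper's outline, but two of your key steps have genuine gaps that the paper works hard to avoid.

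\textbf{The disk obstruction does not survive to the open setting.} You propose to distinguish the eventual curves in $\cc^2$ by lifting an ambient diffeomorphism to the branched double covers and invoking ``the original cork-twist phenomenon.'' But a cork twist is a \emph{rel boundary} obstruction: it says $\tau$ does not extend over $W$ as a diffeomorphism fixing $\partial W$. Once you pass to proper curves in $\cc^2$ there is no boundary left, so a hypothetical ambient diffeomorphism of $\cc^2$ carrying $V_0$ to $V_1$ is under no obligation to respect $\partial W$ in the covers, and the cork argument gives you nothing. Relatedly, for \emph{disks} the double branched cover of $B^4$ is a rational homology ball, so it carries no essential second homology and no adjunction-type invariant has anything to grab onto. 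This is precisely why the paper does not construct exotic complex lines at all (it explicitly excludes the plane among realized topological types, leaving Gompf's question open): it first attaches bands to $D, D'$ to form annuli/tori, giving branched covers that are Stein domains with $b_2 > 0$, and then applies the Lisca--Mati\'c adjunction inequality to show one cover contains a smooth $-2$--sphere while the other does not. That is an absolute diffeomorphism invariant of the open branched covers and hence persists under passage to $\cc^2$. Your plan as written would apply only to the disk case, where this mechanism is unavailable.

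\textbf{``Capping off at infinity'' does not produce a holomorphic curve.} You suggest attaching a holomorphic half-cylinder or conical extension so the two curves agree outside a compact set. There is no such elementary extension: a proper holomorphic curve in $\cc^2$ is constrained globally, you cannot smoothly glue an end onto a compact piece and remain holomorphic, and the open 4-ball is not biholomorphic to $\cc^2$. The paper's route (following Baader--Kutzschebauch--Wold) is to use a Fatou--Bieberbach domain $\Omega \subsetneq \cc^2$ with $\Omega \cong \cc^2$ biholomorphically, chosen via Globevnik's theorem so that its frontier is a $C^1$-small perturbation of a bidisk boundary. One then intersects genuine algebraic curves $V_0, V_1$ (whose pieces in the bidisk are the braided surfaces, via Rudolph) with $\Omega$; the curves do \emph{not} agree outside a compact set, and the topological ambient isotopy of $(\Omega, V_t \cap \Omega)$ must be produced delicately via the isotopy extension theorem on the $C^1$-manifold $\bar\Omega \cap \{|w| \le 1\}$, using that the whole family $F_t$ stays transverse to the frontier. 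Without the Fatou--Bieberbach/Globevnik input, the re-embedding step in your proposal does not close.

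Your point that one must ``compactify just enough to apply the compact cork obstruction'' is exactly the right worry, and the resolution is the two ideas above: replace the cork rel-boundary obstruction by the adjunction inequality on the Stein branched covers (which forces you off the disk case), and replace ad hoc capping by Fatou--Bieberbach domains.
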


These examples realize all possible topological types (with finite topology) except the plane. (Note that any proper holomorphic curve in $\cc^2$ must be an orientable, open surface with no closed components.)   
Theorem~\ref{thm:complex} gives a positive answer to \cite[Question~1.4]{hayden:disks}, which asks whether $\cc^2$ contains exotically knotted pairs of complex curves. It also offers evidence towards a positive answer to \cite[Question~6.13]{gompf:planes}, which concerns the special case of exotic complex lines. 

\subsection{From corks to complex curves.} Our underlying construction is based on a connection between corks and knotted disks in the 4-ball.  A  \emph{cork} is a compact, contractible 4-manifold whose boundary is equipped with a diffeomorphism (its \emph{boundary twist}) that fails to extend to a diffeomorphism of the 4-manifold's interior. These objects lie at the heart of exotic phenomena in 4-manifold topology. For example, any pair of smooth, closed, simply connected 4-manifolds that are homeomorphic are related by removing a cork and regluing it using its boundary twist \cite{CFHS,matveyev}. 

We show that many corks arise as double branched covers of slice disks in the 4-ball. Surprisingly, in many cases, these slice disks can be realized as compact pieces of complex curves in $B^4 \subset \cc^2$. Examples include the famous Mazur cork \cite{akbulut:cork} and positron cork \cite{akbulut-matveyev:decomp}, depicted in Figures~\ref{fig:positron}-\ref{fig:mazur} alongside their associated slice disks.

\begin{mainthm}\label{thm:cork}
There are infinitely many corks $W$ that each arise as the double branched cover of a holomorphically embedded slice disk $D$ in $B^4 \subset \cc^2$. Moreover, the boundary twist on $\partial W$ is the lift of an involution of the slice knot $(S^3,K)$ with $K=\partial D$ that fails to extend to a diffeomorphism of the pair $(B^4,D)$.
\end{mainthm}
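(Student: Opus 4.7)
The plan is to propagate a $\zz/2$-symmetry from a symmetric handle diagram of the cork, through the double branched quotient, all the way down to a holomorphic branch curve in $B^4$. First I would exhibit an infinite family of corks $W_n$ admitting a handle decomposition with one 1-handle and one 2-handle drawn symmetrically with respect to an involution $\iota_n$ of the 0-handle $B^4$ whose fixed set is a properly embedded 2-disk and which exchanges the two feet of the 1-handle. The Mazur cork and positron cork are the first two members of such a family (see Figures~\ref{fig:positron}-\ref{fig:mazur}), and one can generate more by modifying the symmetric clasp patterns in the attaching region while preserving the $\zz/2$-symmetry.

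The involution $\iota_n$ on the 0-handle then lifts to an involution $\tilde{\iota}_n$ of $W_n$ because it preserves the attaching data of the 1-handle and the 2-handle setwise. The fixed set of $\tilde{\iota}_n$ is a properly embedded 2-disk $\tilde{D}_n \subset W_n$. Taking the quotient $W_n/\tilde{\iota}_n$, a handle-by-handle calculation identifies each symmetric handle of $W_n$ with a single handle of the quotient: the 0-handle with 2-disk fixed set quotients to a 0-handle, and the symmetric $(1,2)$-handle pair quotients to a cancelling $(1,2)$-handle pair. Hence $W_n/\tilde{\iota}_n \cong B^4$, the image of $\tilde{D}_n$ is a smooth slice disk $D_n \subset B^4$ with boundary a knot $K_n \subset S^3$, and $\tau_n = \tilde{\iota}_n|_{\partial W_n}$ descends to an involution $\sigma_n$ of $(S^3, K_n)$.

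The main technical obstacle is promoting $D_n$ from a smooth slice disk to a piece of a holomorphic curve in $\cc^2$. For this I would translate the handle picture of $D_n$ into a braided surface in the sense of Rudolph, exhibit an explicit quasipositive factorization of the resulting braid word, and invoke Rudolph's theorem that a quasipositive braided surface is smoothly isotopic rel $\partial$ to the intersection of an algebraic curve with $B^4$. Verifying quasipositivity case by case is direct for the Mazur and positron examples, but for an infinite family one must show that a broad class of symmetric clasp modifications preserves quasipositivity; this is the step I expect to require the most care, since generic ribbon disks are not quasipositive and testing quasipositivity of a factorization is subtle.

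Finally, the nonextension statement is automatic from the cork property: if $\sigma_n$ extended to a diffeomorphism of $(B^4, D_n)$, the covering $\zz/2$-action would lift to an extension of $\tau_n$ over $W_n$, contradicting the defining property of the cork. Pairwise distinctness of the $W_n$, which is needed to get infinitely many corks rather than finitely many, can be detected on the boundary via Heegaard Floer correction terms $d(\partial W_n)$ or via classical invariants of the associated slice knots $K_n$.
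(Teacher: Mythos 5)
Your plan runs the construction in the opposite direction from the paper, and the comparison is instructive. You start with a symmetric handle picture of the cork $W_n$, extend the rotational involution $\iota_n$ over $W_n$, and pass to the quotient $W_n/\tilde\iota_n$; the paper instead starts with the slice disk $D_n$ in $B^4$, draws a handle diagram for its exterior, and performs explicit Kirby calculus to identify the double branched cover $\Sigma(B^4,D_n)$ with $W_n$ (Figures~\ref{fig:Dn-exterior}--\ref{fig:dbc-family}). The two are logically equivalent in principle, but your ``handle-by-handle'' quotient claim is doing real work that you elide: it is not automatic that the quotient of a 4-dimensional $1$-handle (resp.\ $2$-handle) under an involution with $2$-dimensional fixed set is again a $1$- (resp.\ $2$-) handle, nor that the resulting pair cancels geometrically to give $B^4$. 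The paper's bottom-up Kirby calculus is essentially the substitute for verifying exactly this, and you would need an argument of comparable substance. You also need to argue that the cork-twisting boundary involution $\tau_n$ is actually the relevant lift of $\sigma_n$ (there are two lifts, differing by the deck transformation); the paper does this by tracking the curves $\gamma,\gamma'$ and their preimages $\tilde\gamma,\tilde\gamma'$ through the Kirby moves and identifying them with meridians of the $1$- and $2$-handle curves.

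On the holomorphicity step, your proposed route --- exhibiting explicit quasipositive factorizations of $\partial D_n$ for all $n$ --- is substantially harder than necessary and is not what the paper does. The paper gives explicit braid words only for $n=0$ (Theorem~\ref{thm:braid}); for $n\geq 1$ it uses a symplectic band move (following \cite{hayden:disks}) to exhibit $D_n$ as a symplectic disk bounded by the transverse knot $K_n$, then appeals to Boileau--Orevkov \cite{bo:qp} together with Rudolph to upgrade to a piece of an algebraic curve. That softer argument gives quasipositivity ``for free'' without searching for factorizations, and you should use it; producing explicit factorizations for the whole family would be a separate, nontrivial project. Your nonextension argument (lift a hypothetical extension of $\sigma_n$ to contradict the cork property) matches the paper's reasoning in spirit and is fine modulo the two-lift ambiguity already noted. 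Your observation that one must also distinguish the $W_n$ pairwise to get ``infinitely many corks'' is a fair point that the paper treats as implicit; the paper leans on \cite{dhm:corks} and on the fact that the $\partial W_n$ are pairwise distinct, so you would want to supply that via $d$-invariants, the Casson invariant, or the Alexander polynomials of the $K_n$ as you suggest.
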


\begin{figure}[b]\center
\def\svgwidth{\linewidth}
\begingroup%
  \makeatletter%
  \providecommand\color[2][]{%
    \errmessage{(Inkscape) Color is used for the text in Inkscape, but the package 'color.sty' is not loaded}%
    \renewcommand\color[2][]{}%
  }%
  \providecommand\transparent[1]{%
    \errmessage{(Inkscape) Transparency is used (non-zero) for the text in Inkscape, but the package 'transparent.sty' is not loaded}%
    \renewcommand\transparent[1]{}%
  }%
  \providecommand\rotatebox[2]{#2}%
  \newcommand*\fsize{\dimexpr\f@size pt\relax}%
  \newcommand*\lineheight[1]{\fontsize{\fsize}{#1\fsize}\selectfont}%
  \ifx\svgwidth\undefined%
    \setlength{\unitlength}{1314.06000092bp}%
    \ifx\svgscale\undefined%
      \relax%
    \else%
      \setlength{\unitlength}{\unitlength * \real{\svgscale}}%
    \fi%
  \else%
    \setlength{\unitlength}{\svgwidth}%
  \fi%
  \global\let\svgwidth\undefined%
  \global\let\svgscale\undefined%
  \makeatother%
  \begin{picture}(1,0.25939198)%
    \lineheight{1}%
    \setlength\tabcolsep{0pt}%
    \put(0.19811594,0.19890722){\color[rgb]{0.30196078,0.30196078,0.30196078}\makebox(0,0)[lt]{\lineheight{1.25}\smash{\begin{tabular}[t]{l}$0$\end{tabular}}}}%
    \put(0,0){\includegraphics[width=\unitlength,page=1]{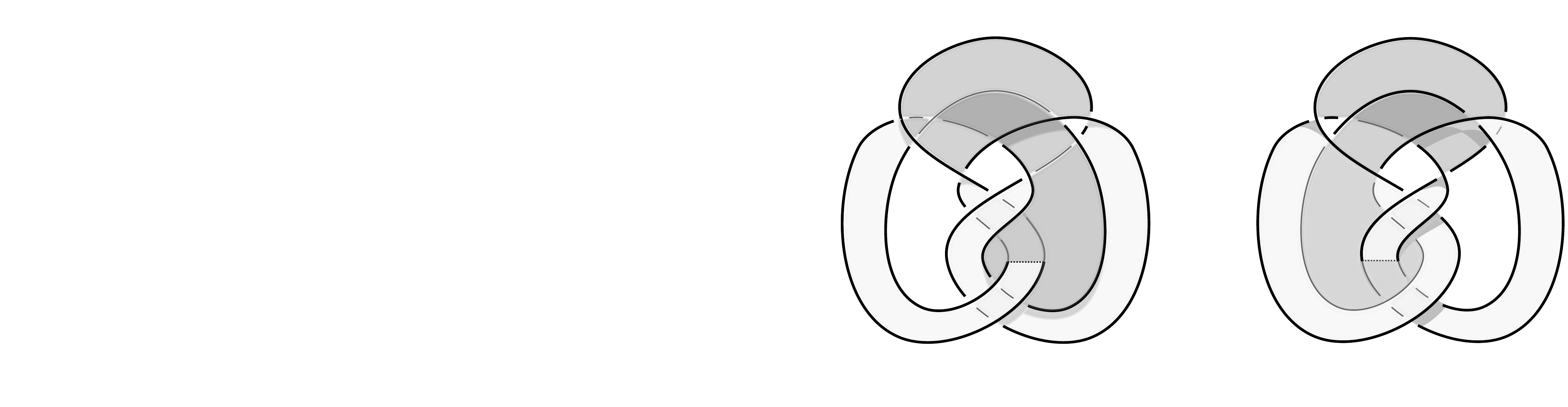}}%
    \put(0.42035656,0.04539086){\color[rgb]{0.4,0.4,0.4}\makebox(0,0)[lt]{\lineheight{1.25}\smash{\begin{tabular}[t]{l}$0$\end{tabular}}}}%
    \put(0,0){\includegraphics[width=\unitlength,page=2]{positron-intro.pdf}}%
    \put(0.24626862,0.1260671){\makebox(0,0)[lt]{\lineheight{1.25}\smash{\begin{tabular}[t]{l}$=$\end{tabular}}}}%
    \put(0.6221175,0.00226906){\makebox(0,0)[lt]{\lineheight{1.25}\smash{\begin{tabular}[t]{l}$D$\end{tabular}}}}%
    \put(0.88466228,0.00226906){\makebox(0,0)[lt]{\lineheight{1.25}\smash{\begin{tabular}[t]{l}$D'$\end{tabular}}}}%
  \end{picture}%
\endgroup%

\caption{The positron cork on the left can be realized as the double branched cover of $B^4$ over each of the disks $D$ and $D'$ on the right.}\label{fig:positron}
\end{figure}

\begin{figure}\center
\def\svgwidth{\linewidth}
\begingroup%
  \makeatletter%
  \providecommand\color[2][]{%
    \errmessage{(Inkscape) Color is used for the text in Inkscape, but the package 'color.sty' is not loaded}%
    \renewcommand\color[2][]{}%
  }%
  \providecommand\transparent[1]{%
    \errmessage{(Inkscape) Transparency is used (non-zero) for the text in Inkscape, but the package 'transparent.sty' is not loaded}%
    \renewcommand\transparent[1]{}%
  }%
  \providecommand\rotatebox[2]{#2}%
  \newcommand*\fsize{\dimexpr\f@size pt\relax}%
  \newcommand*\lineheight[1]{\fontsize{\fsize}{#1\fsize}\selectfont}%
  \ifx\svgwidth\undefined%
    \setlength{\unitlength}{1319.78059063bp}%
    \ifx\svgscale\undefined%
      \relax%
    \else%
      \setlength{\unitlength}{\unitlength * \real{\svgscale}}%
    \fi%
  \else%
    \setlength{\unitlength}{\svgwidth}%
  \fi%
  \global\let\svgwidth\undefined%
  \global\let\svgscale\undefined%
  \makeatother%
  \begin{picture}(1,0.24282994)%
    \lineheight{1}%
    \setlength\tabcolsep{0pt}%
    \put(0.18032287,0.19639491){\color[rgb]{0.30196078,0.30196078,0.30196078}\makebox(0,0)[lt]{\lineheight{1.25}\smash{\begin{tabular}[t]{l}$0$\end{tabular}}}}%
    \put(0,0){\includegraphics[width=\unitlength,page=1]{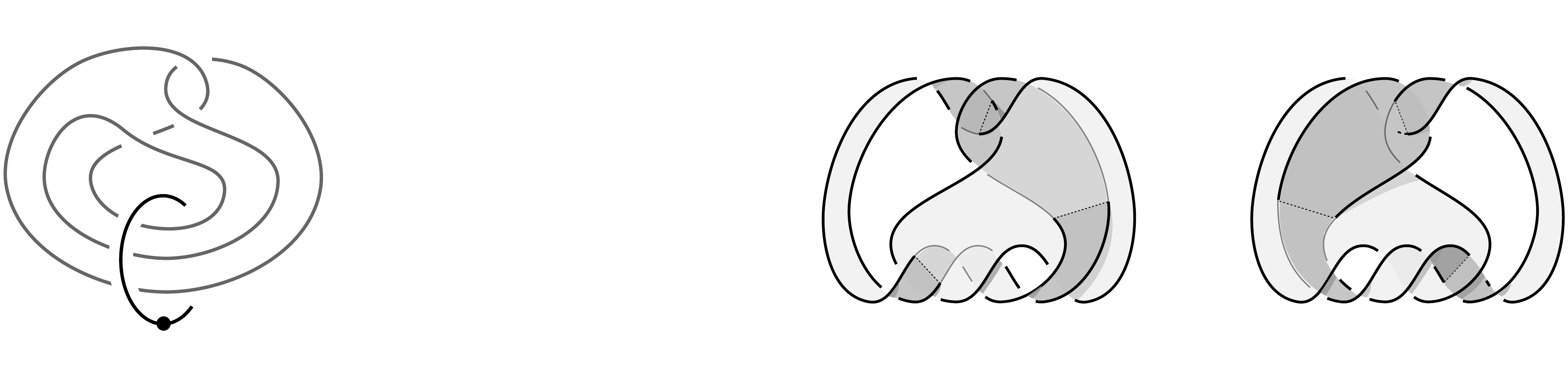}}%
    \put(0.4112859,0.06046396){\color[rgb]{0.30196078,0.30196078,0.30196078}\makebox(0,0)[lt]{\lineheight{1.25}\smash{\begin{tabular}[t]{l}$0$\end{tabular}}}}%
    \put(0,0){\includegraphics[width=\unitlength,page=2]{mazur-intro.pdf}}%
    \put(0.23688993,0.11456186){\makebox(0,0)[lt]{\lineheight{1.25}\smash{\begin{tabular}[t]{l}$=$\end{tabular}}}}%
  \end{picture}%
\endgroup%

\vspace{-.075in}

\caption{The Mazur cork is the double branched cover of $B^4$ over each disk on the right.}\label{fig:mazur}
\end{figure}

To connect this to the study of knotted surfaces,  observe that this construction naturally gives rise to \emph{pairs} of disks: by extending the involution of $(S^3,K)$ to $B^4$ and considering the image of $D$, we obtain a second disk $D'$ bounded by $K$. The fact that the cork's boundary twist cannot extend smoothly over the cork implies that the resulting pairs $(B^4,D)$ and $(B^4,D')$ are not diffeomorphic rel boundary; see \S\ref{sec:invertible}.  However, we show that there are cases in which $D$ and $D'$ are isotopic through homeomorphisms of $B^4$ rel boundary. These form the building blocks for our larger exotic surfaces. 

Though important in their own right, the noncompact and relative settings may also serve as inroads to the subtler closed setting. With this in mind, we aim to develop techniques that could have traction with closed surfaces. Existing techniques fall short on two measures:  On one hand, additional obstructive tools are often available in the relative setting of 4-manifolds with boundary. For example, the techniques previously used to distinguish exotic, properly embedded surfaces in $B^4$ do not obstruct the surfaces' interiors from becoming isotopic in the open 4-ball. On the other hand, the exotic phenomena detected in noncompact settings is often constructed via infinite processes that are difficult to adapt to the compact setting. For example,  previously known exotic orientable surfaces in $\rr^4$ (e.g., \cite{gompf:menagerie,gompf:planes}) require infinitely many critical points and cannot arise as the interior of a compact surface in $B^4$. In contrast, by carefully embedding the disks from Theorem~\ref{thm:cork} into larger surfaces, we prove:

\begin{mainthm}\label{thm:ball}
Any compact, connected, orientable surface with boundary, other than the disk, admits pairs of smooth, proper embeddings into the closed 4-ball that are exotically knotted and whose interiors remain exotically knotted in the open 4-ball.
\end{mainthm}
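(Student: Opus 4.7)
The plan is to augment the exotic disk pairs $(D, D')$ from Theorem~\ref{thm:cork} with standard compact surface topology, attached in a controlled way, so as to realize every topological type of compact, connected, orientable surface with boundary other than the disk. Fix $(D, D')$ as in Theorem~\ref{thm:cork}, with common boundary $K = \partial D = \partial D' \subset S^3$. For a target type $F_{g,n}$ with $(g,n) \ne (0,1)$, let $\Sigma \hookrightarrow B^4$ be a smooth, properly embedded, standardly positioned orientable surface of genus $g$ with $n$ boundary components, chosen so that one boundary circle of $\Sigma$ is a small unknot $U \subset S^3$ contained in a 3-ball disjoint from $K$. Joining $\Sigma$ to $D$ (resp.\ $D'$) by a standard band $\beta$ running from $U$ to $K$ produces smooth proper embeddings $F := D \cup_\beta \Sigma$ and $F' := D' \cup_\beta \Sigma$ of $F_{g,n}$ into $B^4$, agreeing outside a marked patch and carrying the exotic disk data inside it.

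Topological ambient isotopy between $F$ and $F'$ follows from Theorem~\ref{thm:cork}. The extra piece $\Sigma \cup \beta$ can be topologically isotoped into an arbitrarily small neighborhood of a point on $\partial D$, and the topological ambient isotopy carrying $D$ to $D'$ can be arranged to fix that neighborhood pointwise. The composite isotopy carries $F$ to $F'$.

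Smooth non-isotopy in $B^4$ is obtained by a double branched cover argument adapted from Section~\ref{sec:invertible}. The double branched cover of $B^4$ over $F$ decomposes, up to diffeomorphism, as a boundary-connected sum $W \natural N$, where $W$ is the cork of Theorem~\ref{thm:cork} and $N$ is a standard 4-manifold obtained as the double branched cover over $\Sigma \cup \beta$. A hypothetical smooth ambient diffeomorphism $\phi : (B^4, F) \to (B^4, F')$ lifts to a self-diffeomorphism of $W \natural N$ whose boundary action decomposes as the cork twist on $\partial W$ and a standard piece on $\partial N$. A uniqueness argument for the cork summand, using that $N$ is assembled from $B^4$ by standard handle attachments, should promote this to a self-diffeomorphism of $W$ realizing the cork twist on $\partial W$, contradicting Theorem~\ref{thm:cork}.

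The main obstacle is showing that the interiors remain exotic in $\operatorname{int}(B^4)$. A smooth ambient isotopy of $\operatorname{int}(B^4)$ need not extend to $\partial B^4$, so the boundary-anchored obstruction above may dissolve at infinity. The plan is to exploit properness: any ambient isotopy between proper embeddings in $\operatorname{int}(B^4)$ is itself proper, and lifts to a proper ambient isotopy of the double branched cover of $\operatorname{int}(B^4)$ over $\operatorname{int}(F)$. In this noncompact 4-manifold, $W$ sits as a compact submanifold, and a compact-exhaustion argument should confine the isotopy enough near $W$ to produce a self-diffeomorphism of a neighborhood of $W$ realizing the cork twist. I expect this final step to be the main technical difficulty, since nearly all known obstructions to smooth isotopy of surfaces in $B^4$ rely on boundary data or on closure to a closed 4-manifold, neither available here. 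Carrying it out likely requires a cork-local invariant --- a Heegaard Floer or Bauer--Furuta-type obstruction --- that is stable under noncompact stabilization of the ambient 4-manifold.
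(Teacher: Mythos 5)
There is a genuine gap at the heart of your obstruction, and it is the one you flag at the end but do not resolve. With $\Sigma$ standard and disjoint from $K$ and $\beta$ a trivial band, your $F$ and $F'$ are boundary sums of $D$ (resp.\ $D'$) with one fixed standard piece, so the double branched covers $\Sigma(B^4,F)$ and $\Sigma(B^4,F')$ are the \emph{same} smooth 4-manifold $W \natural N$ (recall $\Sigma(B^4,D) \cong \Sigma(B^4,D') \cong W$, since $D'$ is the image of $D$ under an ambient involution of $B^4$). Any obstruction must then live in relative data---where the branch locus sits, or how a hypothetical diffeomorphism restricts to $\partial W$---and that is precisely what need not survive restriction to the open ball. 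You correctly observe this and propose a ``cork-local invariant'' to repair it, but no such tool is supplied, and none is currently available; this is exactly why previously known obstructions to exotic disks in $B^4$ do not descend to the open 4-ball, and your outline does not overcome it.

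The paper's route is genuinely different and avoids the cork twist entirely. Rather than attaching standard topology far from $K$, it attaches a band $c$ \emph{inside} the diagram of $K$ (Figure~\ref{fig:building-blocks}), chosen so that $A' = D' \cup c$ contains an embedded Hopf annulus, whose lift gives a smooth $(-2)$-sphere in $\Sigma(B^4,A')$ (Proposition~\ref{prop:spheres}); meanwhile $\Sigma(B^4,A)$ admits a Stein structure (Lemma~\ref{lem:DBCs}, Figure~\ref{fig:DBCs}), and the Lisca--Mati\'c adjunction inequality (Theorem~\ref{thm:lisca-matic-2}) rules out any smooth $(-2)$-sphere there (Proposition~\ref{prop:no-spheres}). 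The branched covers are thus \emph{smoothly inequivalent 4-manifolds}, distinguished by an absolute, closed-surface invariant that automatically passes to the open interiors; this is the move your proposal is missing. The remaining topological types are obtained by boundary-summing with the accessory surfaces $A_0, T_0$, whose branched covers are also Stein, so the adjunction obstruction persists throughout (proof of Theorem~\ref{thm:ball}).
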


To distinguish these surfaces, we compare their associated double branched covers of $B^4$,   and we use an adjunction inequality \cite{lisca-matic} to distinguish the \emph{interiors} of these 4-manifolds;  this in turn distinguishes the surfaces' interiors in the open 4-ball. (This cannot distinguish  slice disks, whose double branched covers are $\mathbb{Q}$-homology balls.)

After establishing Theorems~\ref{thm:cork} and \ref{thm:ball}, there are two key steps remaining in the proof of Theorem~\ref{thm:complex}: We first arrange the larger surfaces in $B^4$ from Theorem~\ref{thm:ball} to arise as compact pieces of complex curves, and we then carefully re-embed these surfaces' interiors as proper complex curves in $\cc^2$. We note that this last step is nontrivial because the open 4-ball is not biholomorphically equivalent to $\cc^2$. Instead, we achieve this re-embedding by situating our initial complex curves inside of \emph{Fatou-Bieberbach domains} (cf \cite{bkw}); see  \S\ref{sec:plane} for details. This re-embedding may change the surfaces' isotopy types, but the more robust obstruction used to establish Theorem~\ref{thm:ball} persists, distinguishing these curves' images in $\cc^2$.

To construct compact pieces of complex curves, we use Rudolph's technology of quasipositive braids and braided surfaces \cite{rudolph:qp-alg,rudolph:braided-surface}.  
Recall that an element of the braid group is \emph{quasipositive} if it is a  product of conjugates of the positive Artin generators, i.e., it can be written as
$$\prod \omega_i \sigma_{j_i} \omega_i^{-1},$$
where each $\omega_i$ is a word in the braid group and each $\sigma_{j_i}$ is a positive Artin generator.  As illustrated in Figure~\ref{fig:braid-and-surface}, each quasipositive braidword determines a \emph{positively braided surface}; see \cite[\S2]{rudolph:braided-surface} and \S\ref{sec:braid} below.   Rudolph showed that such a surface can be realized  as a compact piece of an algebraic curve. Crucially, this surface depends not only on the braid but on the quasipositive factorization itself, allowing for subtle control of the resulting surface (c.f., \cite{bkw,baykur-vhm,hayden:cross,oba:surfaces}). We show that the disks from Figure~\ref{fig:positron} arise from  ``exotic'' braid factorizations.

\begin{figure}\center
\includegraphics[width=\linewidth]{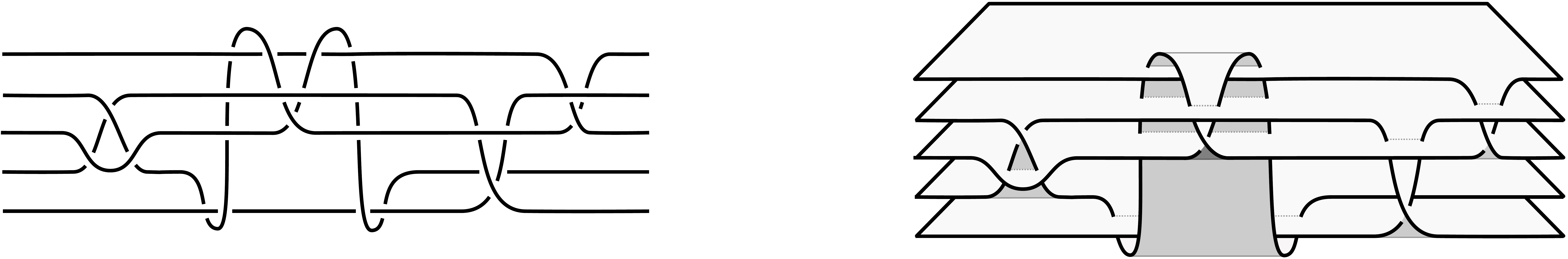}
\caption{The quasipositive braid $\beta$ and the associated positively braided surface.}
\label{fig:braid-and-surface}
\end{figure}

\begin{mainthm}\label{thm:factor}
Let $D$ and $D'$ denote the holomorphic disks in $B^4 \subset \cc^2$ 
obtained as braided surfaces from the following quasipositive words in the 5-stranded braid group:
\begin{align*}
\beta&=(\sigma_{2}\sigma_{3}\sigma_{2}^{-1})(\sigma_1^{-2} \sigma_2 \sigma_3 \sigma_4^2 \sigma_3^{-1} \sigma_2 \sigma_3 \sigma_4^{-2}  \sigma_3^{-1} \sigma_2^{-1} \sigma_1^2)(\sigma_3^{-1} \sigma_2 \sigma_1 \sigma_2^{-1} \sigma_3)(\sigma_4^{-1} \sigma_3 \sigma_4)\\
\beta'&=\sigma_{2}(w \sigma_{2}^{-1}\sigma_{1}\sigma_{2}w^{-1})(w\sigma_{2}^{-1}\sigma_{3}\sigma_{1}\sigma_{2}\sigma_{1}^{-1}\sigma_{3}^{-1}\sigma_{2}w^{-1}) (w\sigma_{3}^2\sigma_{4}\sigma_{3}^{-2}w^{-1}),
\end{align*}
where $w = \sigma_3 \sigma_4^{-1} \sigma_1^{-1} \sigma_3^{-2} \sigma_2^{-1}   \sigma_1^{-1} \sigma_3^{-1}$. There is a braid isotopy from $\beta$ to $\beta'$  extending to a topological isotopy from $D$ to $D'$ but not to a smooth isotopy from $D$ to $D'$.
\end{mainthm}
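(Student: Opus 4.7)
The plan is to verify in turn: (a) that $\beta$ and $\beta'$ represent the same element of the 5-strand braid group; (b) that their associated braided surfaces coincide with the positron disks $D$ and $D'$ of Figure~\ref{fig:positron}; and (c) that the isotopy conclusions then follow from Theorem~\ref{thm:cork} together with the topological isotopy argument outlined after its statement.

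For (a), I would reduce $\beta$ to $\beta'$ via an explicit sequence of commutation relations $\sigma_i \sigma_j = \sigma_j \sigma_i$ (for $|i-j|\geq 2$) and braid relations $\sigma_i\sigma_{i+1}\sigma_i = \sigma_{i+1}\sigma_i\sigma_{i+1}$. Since both words have modest length and share the substantial conjugating word $w$, the reduction can be carried out by hand or verified symbolically. The resulting path in the braid group induces an ambient isotopy of the closure $K = \widehat{\beta} = \widehat{\beta'}$ in $S^3 = \partial B^4$.

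For (b), I would use Rudolph's banded-disk description of positively braided surfaces: each conjugate factor $\omega_i \sigma_{j_i} \omega_i^{-1}$ contributes a band attached to five parallel trivial disks, with attaching region prescribed by the action of $\omega_i$ on the strands $j_i$ and $j_i+1$. Drawing the four bands for each factorization and reducing the resulting ribbon disk presentations by standard handle moves (band slides, handle cancellations) should transform them into the two diagrams of $D$ and $D'$ in Figure~\ref{fig:positron}, identifying them as the positron cork's branched-cover disks.

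For (c), once (b) is in place, Theorem~\ref{thm:cork} applies directly. The double branched cover of $B^4$ over $D$ (equivalently, $D'$) is the positron cork $W$, and the cork twist fails to extend smoothly over $W$. Any smooth ambient isotopy extending the braid isotopy would yield, via a lifting argument on the double branched covers, a smooth extension of the cork twist over $W$, a contradiction. The topological isotopy is provided by the discussion after Theorem~\ref{thm:cork}: because $W$ is topologically a $4$-ball by Freedman's theorem, the cork twist extends topologically over $W$, and the resulting self-homeomorphism produces, after the $\zz/2$-quotient, a topological ambient isotopy of $D$ to $D'$ in $B^4$ that can be reparametrized near $\partial B^4$ to realize the braid isotopy of (a).

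The main obstacle is step (b): carefully tracing the braided surfaces through the long conjugating words, especially $w$ in $\beta'$, and confirming that the resulting ribbon disk diagrams match those in Figure~\ref{fig:positron}. Steps (a) and (c) amount, respectively, to a braid calculation and an application of Theorem~\ref{thm:cork} together with classical topological extension arguments.
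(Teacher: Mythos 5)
Your steps (a) and (b) align with what the paper does (the braid calculation is Lemma~A.1 in the appendix, and the identification of the braided surfaces with the positron disks is Theorem~4.3). The gap is in step (c), on both the topological and the smooth sides, where your route genuinely differs from the paper and, as described, does not close.

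On the topological side, you propose extending the cork twist over $W$ via Freedman and descending to a self-homeomorphism of $(B^4,D)$. But a Freedman extension is not automatically equivariant with respect to the covering involution, so it has no reason to descend to $B^4$; and even a self-homeomorphism of pairs is weaker than an ambient isotopy. The paper sidesteps both issues by working downstairs directly: it computes $\pi_1(B^4\setminus D)\cong\mathbb{Z}$ (Proposition~2.1) and invokes the Conway--Powell theorem (Theorem~2.2) to get a topological isotopy rel boundary, which is then spliced with the braid isotopy.

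On the smooth side, the cork-twist argument you invoke shows that $\tau$ does not extend over $(B^4,D)$, hence that $(B^4,D)$ and $(B^4,D')$ are not diffeomorphic \emph{rel boundary}. But the putative smooth isotopy in Theorem~D extends a \emph{braid} isotopy, which moves $K$ in $S^3$ and returns it to itself only up to some boundary diffeomorphism $\phi_1$. The composition $\hat{\tau}^{-1}\circ\Phi_1$ then restricts to $\tau^{-1}\circ\phi_1$ rather than to $\tau$, and there is no reason its lift is (isotopic to) the cork twist rather than, say, the deck transformation times the cork twist, which \emph{does} extend. The paper avoids this entirely: attaching the band $c=\sigma_2$ to both braid words carries the hypothetical smooth isotopy of disks to a smooth isotopy from the annulus $A$ to $A'$, and Propositions~3.2--3.3 show $\Sigma(B^4,A)$ and $\Sigma(B^4,A')$ are not even abstractly diffeomorphic (one contains a smoothly embedded $(-2)$-sphere obstructed by the Lisca--Mati\'c adjunction inequality in the other), which is an obstruction insensitive to what the isotopy does near the boundary. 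Replacing your step (c) by this ``band up to an annulus, then apply the adjunction obstruction to the Stein double cover'' argument would repair the proof.
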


The construction of inequivalent braided surfaces through braid factorizations forms a close analog of the construction of inequivalent Lefschetz fibrations through monodromy factorizations. In particular, taking branched covers, the braid factorizations from Theorem~\ref{thm:factor} lift to products of positive Dehn twists on a genus two surface with one boundary component. This produces a pair of Lefschetz fibrations on the positron cork such that replacing one factorization with the other corresponds to performing the cork twist. For previous work relating monodromy factorizations and inequivalent Lefschetz fibrations, see \cite{endo-mark-vhm,baykur-korkmaz:exotic,akhmedov-katzarkov}; also see \cite{ukida} for Lefschetz fibrations on the Mazur cork (which do not arise as lifts of braids). 

The literature already contains a variety of inequivalent monodromy factorizations giving rise to Lefschetz fibrations whose total spaces are homeomorphic but not diffeomorphic. In contrast, previous  examples of braided surfaces arising from inequivalent braid factorizations (e.g., \cite{rudolph:braided-surface,auroux:factorizations,baykur-vhm,oba:surfaces}) are not topologically equivalent, as they are distinguished by the fundamental groups of their complements or the homotopy types of their branched covers. The braid factorizations in  Theorem~\ref{thm:factor} give the first examples in which the corresponding braided surfaces are exotically knotted.

\emph{Organization.} In \S\ref{sec:invertible}, we discuss the underlying topological construction, with a focus on the disks $D$ and $D'$ from Figure~\ref{fig:positron}, and then prove Theorem~\ref{thm:cork}. In \S\ref{sec:ball}, we introduce the rest of our key building blocks in the 4-ball and prove Theorem~\ref{thm:ball}. In \S\ref{sec:braid}, we discuss braided surfaces and recast the building blocks from \S\ref{sec:invertible}-\ref{sec:ball} as compact pieces of algebraic curves in $\cc^2$; the proof of Theorem~\ref{thm:factor} is given in this section.  Finally, in \S\ref{sec:plane}, we discuss Fatou-Bieberbach domains and complete the proof of Theorem~\ref{thm:complex}.

\smallskip

\emph{Conventions.} Unless specified otherwise, manifolds will be assumed to be smooth and orientable.  Any connected, orientable, properly embedded surface $F \subset B^4$ has $H_1(B^4 \setminus F) \cong \zz$, so we may unambiguously take the double branched cover of $B^4$ over $F$ to mean the branched cover associated to the homomorphism $\pi_1(B^4 \setminus F) \to H_1(B^4 \setminus F) \cong \zz \to \zz/2$ that maps each meridian of $F$ to $1 \in \zz/2$.

\smallskip

\emph{Acknowledgements.}  Thanks to Marco Golla and Bob Gompf for stimulating conversations on these topics, and to Siddhi Krishna and Lisa Piccirillo for helpful comments. This work was supported by NSF grants DMS-1803584 and DMS-2114837.

\vspace{-.25in}

\section{From corks to knotted disks}\label{sec:invertible}

\subsection{The topological construction.} \label{subsec:invertible} 
We begin by recalling that a link $L \subset S^3$ is \emph{strongly invertible} if there is an orientation-preserving involution of $S^3$ that preserves $L$ and induces on every component of $L$ an involution with exactly two fixed points.  Montesinos  \cite{montesinos} proved that if $L$ is  strongly invertible, then any 3-manifold obtained by Dehn surgery along $L$ is a double branched cover of $S^3$ along a link.

A key example is illustrated in Figure~\ref{fig:invertible}, which exhibits the boundary of the positron cork (from Figure~\ref{fig:positron}) as the double branched cover of a knot $K$ in $S^3$. The figure depicts two involutions of the underlying link $L$: a strong involution $\rho$ (which is used to produce $K$) and an involution $\tau$ that exchanges the link components (and which descends to a symmetry of $K$).

\begin{figure}[b]\center
\vspace{-.125in}
\def\svgwidth{.97\linewidth}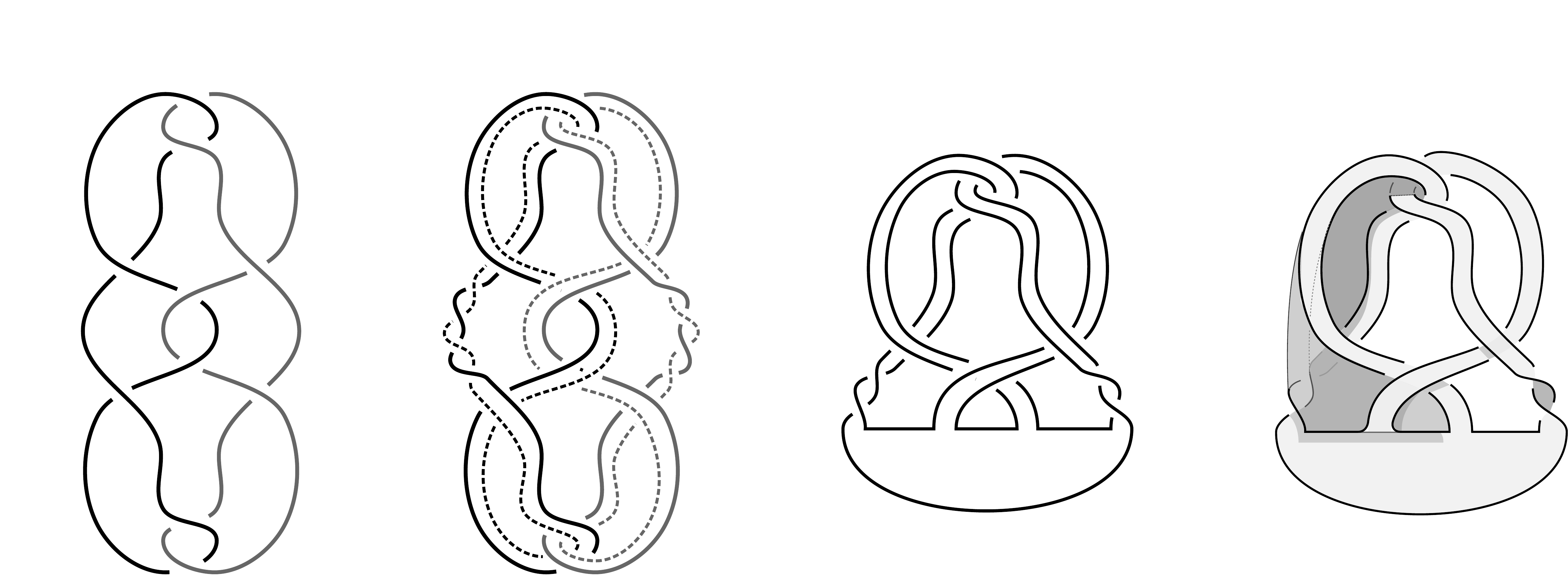
\caption{The boundary of the positron cork is given by surgery on the invertible link on the left, and is realized as the double branched cover of the slice knot $K=\partial D$.}\label{fig:invertible}
\end{figure}

In fact, a 4-dimensional extension holds: The positron cork itself is the double branched cover of $B^4$ along the slice disk $D$ shown in Figure~\ref{fig:invertible}. (This disk $D$ is isotopic  to the disk in Figure~\ref{fig:positron}; use the isotopy in Figure~\ref{fig:iso-K}, followed by a 180$^\circ$ rotation along the $x$-axis.) Moreover, while the involution $\tau$ descends to an involution of $K$, it does \emph{not} extend to an involution of the slice disk $D$.  
 Similar examples can be obtained from other simple corks, including the Mazur cork; see Figure~\ref{fig:mazur}. 

By considering the image of the disk $D$ under a natural extension of the involution of $K$, we obtain a second disk $D'$ bounded by $K$. Because the involution of $K$ induced by $\tau$ does not extend to a diffeomorphism of the pair $(B^4,D)$, the pairs $(B^4,D)$ and $(B^4,D')$ are not diffeomorphic rel boundary. However, in this example, we will see that these disks are \emph{topologically} isotopic rel boundary.

\begin{figure}\center
\def\svgwidth{.9\linewidth}
\begingroup%
  \makeatletter%
  \providecommand\color[2][]{%
    \errmessage{(Inkscape) Color is used for the text in Inkscape, but the package 'color.sty' is not loaded}%
    \renewcommand\color[2][]{}%
  }%
  \providecommand\transparent[1]{%
    \errmessage{(Inkscape) Transparency is used (non-zero) for the text in Inkscape, but the package 'transparent.sty' is not loaded}%
    \renewcommand\transparent[1]{}%
  }%
  \providecommand\rotatebox[2]{#2}%
  \newcommand*\fsize{\dimexpr\f@size pt\relax}%
  \newcommand*\lineheight[1]{\fontsize{\fsize}{#1\fsize}\selectfont}%
  \ifx\svgwidth\undefined%
    \setlength{\unitlength}{1406.79749977bp}%
    \ifx\svgscale\undefined%
      \relax%
    \else%
      \setlength{\unitlength}{\unitlength * \real{\svgscale}}%
    \fi%
  \else%
    \setlength{\unitlength}{\svgwidth}%
  \fi%
  \global\let\svgwidth\undefined%
  \global\let\svgscale\undefined%
  \makeatother%
  \begin{picture}(1,0.233411)%
    \lineheight{1}%
    \setlength\tabcolsep{0pt}%
    \put(0,0){\includegraphics[width=\unitlength,page=1]{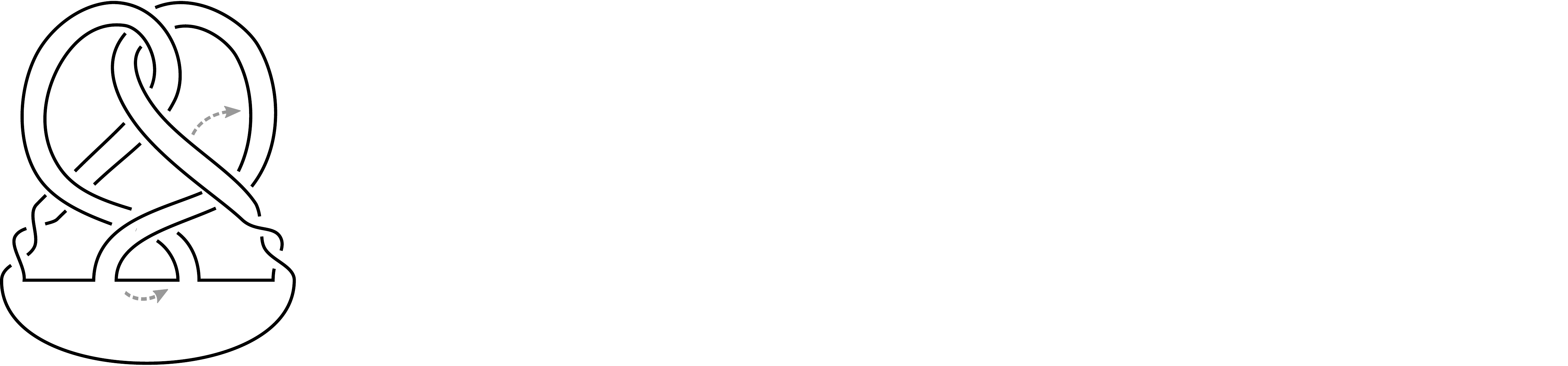}}%
    \put(0.20587121,0.11037981){\makebox(0,0)[lt]{\lineheight{1.25}\smash{\begin{tabular}[t]{l}$=$\end{tabular}}}}%
    \put(0.47371292,0.11037981){\makebox(0,0)[lt]{\lineheight{1.25}\smash{\begin{tabular}[t]{l}$=$\end{tabular}}}}%
    \put(0.73265824,0.11037981){\makebox(0,0)[lt]{\lineheight{1.25}\smash{\begin{tabular}[t]{l}$=$\end{tabular}}}}%
    \put(0,0){\includegraphics[width=\unitlength,page=2]{iso-disks.pdf}}%
  \end{picture}%
\endgroup%

\caption{Relating two diagrams for $K$.}\label{fig:iso-K}
\end{figure}

\subsection{Topological isotopy.} A pair of surfaces in $B^4$ are \emph{topologically} isotopic rel boundary if they are  isotopic through homeomorphisms of $B^4$ that fix $S^3$.

\begin{prop}\label{prop:top-iso}
The disks $D$ and $D'$ are topologically isotopic rel boundary.
\end{prop}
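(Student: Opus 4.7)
The plan is to invoke a topological uniqueness theorem for slice disks in $B^4$: two slice disks for a common knot $K$ whose exteriors both have fundamental group $\mathbb{Z}$ are topologically isotopic rel boundary. This is a consequence of Freedman's topological surgery machinery, and the precise isotopy statement we need appears in recent work of Conway--Powell.

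First I would observe that, by construction, $D'$ is the image of $D$ under a self-homeomorphism of $B^4$ extending the involution of the pair $(S^3,K)$ induced by $\tau$. In particular, the exteriors $B^4 \setminus D$ and $B^4 \setminus D'$ are diffeomorphic, and their fundamental groups coincide. It therefore suffices to show that $\pi_1(B^4 \setminus D) \cong \mathbb{Z}$.

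For this, I would use the ribbon-disk description of $D$ visible in Figures~\ref{fig:positron} and \ref{fig:invertible}. A ribbon presentation with $n$ bands yields a handle decomposition of $B^4 \setminus D$ with one $0$-handle, $n$ $1$-handles, and $n$ $2$-handles, so that $\pi_1(B^4 \setminus D)$ is the quotient of the Wirtinger presentation of $\pi_1(S^3 \setminus K)$ by one relation per ribbon band. In the positron example these band relations conjugate the various meridian generators into a single meridian $\mu$ of $K$, so the presentation collapses to $\langle \mu \rangle \cong \mathbb{Z}$, which is consistent with $H_1(B^4 \setminus D) \cong \mathbb{Z}$.

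With $\pi_1 \cong \mathbb{Z}$ established for both complements, the topological uniqueness theorem produces an ambient topological isotopy from $D$ to $D'$ rel boundary. The main technical obstacle is the Wirtinger calculation itself: one must carefully identify the ribbon bands in the diagram and verify that the resulting relations cut the presentation all the way down to a single generator, rather than to some larger nonabelian quotient whose abelianization is nevertheless $\mathbb{Z}$.
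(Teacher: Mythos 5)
Your proposal matches the paper's proof: both reduce to the Conway--Powell theorem and then verify that $\pi_1(B^4 \setminus D) \cong \mathbb{Z}$ by a presentation computation, appealing to the $\tau$-symmetry to handle $D'$. The paper carries out the $\pi_1$ calculation explicitly from a handle diagram for the disk exterior (obtaining the presentation $\langle x,y \mid y^2xy^{-1} \rangle \cong \langle x,y \mid yx \rangle \cong \mathbb{Z}$), whereas you sketch the equivalent Wirtinger-plus-band-relations computation without completing it; that is the only gap, and it is the same small verification the paper supplies.
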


The proof is based on the following result of Conway and Powell. 

\begin{thm}[\cite{conway-powell}]\label{thm:conway-powell}
Any smooth, properly embedded disks in $B^4$ with the same boundary and whose complements have $\pi_1 \cong \zz$  are topologically isotopic rel boundary.
\end{thm}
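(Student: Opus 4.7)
The plan is to reduce the statement about the disks to one about their exteriors, apply Freedman's topological surgery machinery for good fundamental groups to show these exteriors are homeomorphic rel boundary, and then promote the homeomorphism to an ambient topological isotopy via the Alexander trick. Write $E = \overline{B^4 \setminus \nu D}$ and $E' = \overline{B^4 \setminus \nu D'}$. Because $D$ and $D'$ are disks, the tubular neighborhoods are trivial $D^2$-bundles and each carries a canonical identification $\partial E \cong S^3_0(K) \cong \partial E'$, where $S^3_0(K)$ is the zero-surgery on the common boundary knot $K$. By hypothesis $\pi_1(E) \cong \pi_1(E') \cong \zz$ (generated by a meridian of the disk), and a Mayer--Vietoris computation gives $H_*(E) \cong H_*(E') \cong H_*(S^1)$.

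The heart of the proof is to build an $s$-cobordism $W$ between $E$ and $E'$ that restricts to the product $S^3_0(K) \times [0,1]$ on the boundary. First I would produce a simple homotopy equivalence $f \colon E \to E'$ fixing the boundary: both exteriors have the homotopy type of a finite $2$-complex modeled on $S^1$ with identical boundary $\zz$-cohomological data, so standard obstruction theory over the $2$-skeleton extends the identity on $\partial$ to such an $f$, and uniqueness up to simple homotopy follows from $\mathrm{Wh}(\zz)=0$. Next I would realize $f$ by a topological normal bordism rel boundary and push it through Wall's relative surgery exact sequence; the obstruction for promoting this bordism to an $s$-cobordism lives in $L_5(\zz[\zz])$, and I expect it to vanish either by direct computation or by producing a geometric bordism between $(E,\partial E)$ and $(E',\partial E')$ from the singular $2$-sphere $D \cup -D'$ inside $S^4 = B^4 \cup_{S^3} B^4$, whose complement decomposes as $E \cup_{S^3_0(K)} E'$. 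Freedman's $5$-dimensional topological $s$-cobordism theorem then applies---since $\zz$ is a good group in Freedman's sense---and produces a homeomorphism $\Phi_E \colon E \to E'$ that is the identity on $S^3_0(K)$.

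Extending $\Phi_E$ across the trivial tubular neighborhoods by the identity of $D^2 \times D^2$ yields a self-homeomorphism $\Phi \colon B^4 \to B^4$ fixing $\partial B^4 = S^3$ pointwise and sending $D$ onto $D'$. The topological Alexander trick (coning from the origin of $B^4$) then provides an isotopy of $B^4$ from $\id$ to $\Phi$ fixing $S^3$ throughout, and tracking $D$ along this isotopy gives the required topological isotopy from $D$ to $D'$ rel boundary. The main obstacle is the surgery-theoretic step: verifying that the surgery obstruction in $L_5(\zz[\zz])$ vanishes with compatible boundary behavior is delicate, and the hypothesis that the complement has $\pi_1 \cong \zz$ is indispensable twice over---it both makes Freedman's theorem available (goodness of $\zz$) and forces $s$- and $h$-cobordisms to coincide ($\mathrm{Wh}(\zz)=0$). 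Without this hypothesis neither ingredient is present, and the conclusion is known to fail in general.
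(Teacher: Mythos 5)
A preliminary remark: the paper offers no proof of this statement. It is quoted directly from \cite{conway-powell}, and the only thing the paper adds is the observation that the hypothesis $\pi_1(B^4\setminus D)\cong\zz$ implies the homotopy ribbon condition under which Conway and Powell state their results (the meridian generates $H_1$ of the complement and hence, when $\pi_1$ is abelian, all of $\pi_1$, so the inclusion-induced map from $\pi_1(S^3\setminus \partial D)$ is automatically onto). Your proposal therefore has to be judged against the Conway--Powell argument itself. In broad strokes you have reconstructed its architecture correctly: pass to the exteriors, show they are homeomorphic rel boundary by a surgery-theoretic argument that is available because $\zz$ is a good group and $\mathrm{Wh}(\zz)=0$, glue back the trivial normal bundles to obtain a homeomorphism of $(B^4,D)$ onto $(B^4,D')$ fixing $S^3$, and finish with the topological Alexander trick. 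That last step is correct exactly as you state it.

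As a proof, however, the sketch has two genuine gaps, and they sit at the two places where essentially all of the work in \cite{conway-powell} happens. First, the claim that each exterior ``has the homotopy type of a finite $2$-complex modeled on $S^1$'' conceals the key input: what is needed is that the exteriors are aspherical, and this is proved by showing the infinite cyclic cover is acyclic via an Alexander-module computation. It is precisely there, and not in ``standard obstruction theory over the $2$-skeleton,'' that the hypothesis $\pi_1\cong\zz$ earns its keep. Second, the surgery obstruction does not vanish by inspection: Shaneson splitting gives $L_5(\zz[\zz])\cong L_5(\zz)\oplus L_4(\zz)\cong\zz$, so ``direct computation'' of the $L$-group cannot be the argument --- one must show that the specific obstruction vanishes or that the relevant $L_5$-action on the rel-boundary structure set is trivial. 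Your fallback is also not viable as stated: the exterior of the sphere $D\cup -D'$ in $S^4$ is a $4$-manifold (it is $E$ and $-E'$ glued along the knot exterior $S^3\setminus\nu K$, with boundary $S^1\times S^2$, not a gluing along all of $S^3_0(K)$), whereas what you need is a $5$-dimensional $s$-cobordism from $E$ to $E'$ rel boundary. The sphere exterior does show, via Freedman, that $D\cup -D'$ is a topologically unknotted $2$-sphere, but that fact alone does not yield an isotopy of the two disks rel boundary.
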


The results in \cite{conway-powell} are stated for \emph{homotopy ribbon} disks, i.e., $D \subset B^4$ such that the inclusion $S^3 \setminus \partial D \hookrightarrow B^4 \setminus D$  induces a surjection on fundamental groups. When $\pi_1(B^4 \setminus D)$ is infinite cyclic, this inclusion-induced map is always surjective.

\begin{figure}[b]\center
\def\svgwidth{\linewidth}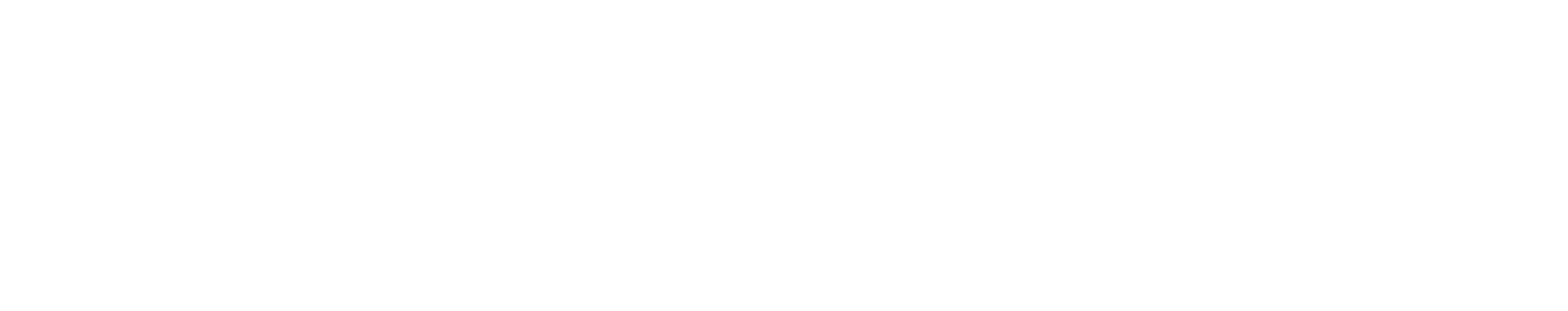\caption{Handle diagrams for the exterior of the slice disk $D$ in $B^4$,  decorated with generators $x$ and $y$ of its fundamental group.}\label{fig:pi1-D}
\end{figure}

\begin{proof}[Proof of Proposition~\ref{prop:top-iso}]
We begin by writing down a handle diagram for the exterior of $D$ in $B^4$ in Figure~\ref{fig:pi1-D}; see \cite[\S6.2]{GompfStipsicz4}. From the rightmost diagram, obtain a presentation of the disk exterior's fundamental group (where the relator is obtained by expressing the 2-handle's attaching curve as a word in the generators $x$ and $y$):
\begin{align*}
\pi_1(B^4 \setminus D) \cong\left \langle  x,y  \mid  1=yyxxx^{-1}y^{-1}=y^2 xy^{-1} \right \rangle \cong \left\langle  x,y  \mid yx=1 \right\rangle \cong \zz. 
\end{align*}
By symmetry, the exterior of $D' \subset B^4$ also has $\pi_1 \cong \zz$,  hence Theorem~\ref{thm:conway-powell} implies that $D$ and $D'$ are topologically isotopic rel boundary.
\end{proof}

However, the above construction  does not \emph{always} yield topologically equivalent disks.

\begin{prop}\label{prop:15n-pi1}
The disks associated to the Mazur cork in Figure~\ref{fig:mazur}  are not topologically isotopic rel boundary.
\end{prop}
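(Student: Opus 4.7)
The plan is to distinguish $(B^4,D)$ and $(B^4,D')$ by computing the fundamental groups of the two disk exteriors, in the spirit of the proof of Proposition~\ref{prop:top-iso} but in reverse: there both exteriors had $\pi_1 \cong \zz$ and Theorem~\ref{thm:conway-powell} produced an isotopy, while here I expect at least one of the two fundamental groups to strictly contain $\zz$ (otherwise Conway--Powell would give a topological isotopy, contradicting the desired conclusion). Since any topological isotopy rel boundary yields a homeomorphism of the disk exteriors rel boundary, a group-theoretic distinction between them suffices to obstruct it.

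First, following the Kirby-calculus procedure illustrated in Figure~\ref{fig:pi1-D}, I would convert each of the two slice-disk diagrams in Figure~\ref{fig:mazur} into a handle diagram for its exterior in $B^4$: the dotted circle encoding a regular neighborhood of the slice disk becomes a $2$-handle, while the $0$-framed $2$-handle in the cork diagram is reinterpreted as a $1$-handle. From each resulting diagram I would read off a two-generator one-relator presentation of $\pi_1$, the relator being the attaching circle of the remaining $2$-handle expressed as a word in meridians of the $1$-handles.

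Finally, I would compare the groups $G := \pi_1(B^4\setminus D)$ and $G' := \pi_1(B^4\setminus D')$. Both abelianize to $\zz$, so the cleanest invariant is the Alexander module, computed as the cokernel of the Jacobian of Fox derivatives of the relator; distinct $\zz[t,t^{-1}]$-modules force the groups to be non-isomorphic. The expected outcome, consistent with the naming of the proposition, is that at least one of the two exteriors has Alexander module $\zz[t,t^{-1}]/(f(t))$ for a nontrivial Fox--Milnor factor of $\Delta_K$, while the other either has a different nontrivial module or is trivial. The main obstacle is the Kirby-calculus bookkeeping: the conversion from slice-disk diagram to exterior is delicate because the disk bands interact nontrivially with the $0$-framed component, and an error in the attaching word would propagate into an incorrect relator and Alexander module. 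If Alexander modules happen to coincide, a fallback invariant would be counts of epimorphisms from $G$ and $G'$ onto a small finite group, which is tractable directly from the two-generator one-relator presentations produced above.
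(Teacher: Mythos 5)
There is a genuine gap, and it is fatal to the plan as written: the two disk exteriors are diffeomorphic as abstract manifolds, so $G=\pi_1(B^4\setminus D)$ and $G'=\pi_1(B^4\setminus D')$ are isomorphic as abstract groups, and any invariant that depends only on the abstract isomorphism type — the Alexander module, counts of epimorphisms onto a finite group, etc.\ — must agree for the two. Recall from \S\ref{subsec:invertible} that $D'$ is by definition the image of $D$ under a natural extension to $B^4$ of the involution $\tau$ of $(S^3,K)$; that extension restricts to a diffeomorphism $B^4\setminus D \to B^4\setminus D'$. Your own first sentence correctly notes that a rel-boundary isotopy gives a homeomorphism of exteriors \emph{rel boundary}, but you then drop the rel-boundary condition and propose to distinguish the exteriors up to abstract isomorphism, which cannot succeed here. (Your side remark that at least one of the groups must be $\neq\zz$ is correct, but both are the same group $\neq\zz$.)

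The obstruction has to be a \emph{peripheral} one. The paper's argument fixes curves $\alpha,\alpha'\subset S^3\setminus K$ (swapped by $\tau$) and compares the inclusion-induced maps $\pi_1(S^3\setminus K)\to\pi_1(B^4\setminus D)$ and $\pi_1(S^3\setminus K)\to\pi_1(B^4\setminus D')$. From the handle diagram one reads off that $\alpha$ dies in $\pi_1(B^4\setminus D)$, hence by symmetry $\alpha'$ dies in $\pi_1(B^4\setminus D')$. A rel-boundary homeomorphism of exteriors would fix $\alpha'$ pointwise and therefore force $\alpha'$ to die in $\pi_1(B^4\setminus D)$ as well. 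So the whole burden is to show that $\alpha'$, which is represented by the word $xy$ in the two-generator one-relator presentation $\langle x,y\mid x^2yx^{-2}y^{-1}xy\rangle$ you would obtain from the handle diagram, is a nontrivial element. This is not automatic (it is, after all, trivial in $G'$ under the conjugate inclusion), and the paper settles it with Weinbaum's theorem that in a two-generator one-relator group with nonempty cyclically reduced relator $R$, no proper subword of $R$ is a relator; $xy$ is such a subword. Your Kirby-calculus preliminaries are fine and would produce the same presentation, but you need to replace the absolute group-theoretic invariants with this peripheral, subword-based nontriviality argument.
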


\begin{proof} We consider the exterior of one of the slice disks from Figure~\ref{fig:mazur}, which is represented by the handle diagrams shown in Figure~\ref{fig:15n-pi1}. Observe that the loop $\alpha \subset S^3$ does not run over any 1-handles, hence it is nullhomotopic in the disk exterior. An analogous argument shows that the curve $\alpha'$ is nullhomotopic in the exterior of the other slice disk. To show that the disks are not topologically isotopic rel boundary, it suffices to show that $\alpha'$ is not nullhomotopic in the first slice disk exterior.  To that end, we observe that $\pi_1$ has a presentation with two generators $x$ and $y$ and a single relator $x^2yx^{-2} y^{-1}x y$. Here  the loop $\alpha'$ is represented by the word $xy$. In a two-generator group with one (nonempty, cyclically reduced) relator $R$, no proper subword of $R$ is a relator \cite[Theorem 2]{weinbaum}. Since $xy$ is a subword of the (nonempty, cyclically reduced) relator $R=x^2yx^{-2} y^{-1}x y$, the element $xy$ representing $\alpha'$  is nontrivial in $\pi_1$. 
\end{proof}

\begin{figure}[h!]\center
\vspace{-.1in}
\def\svgwidth{.95\linewidth}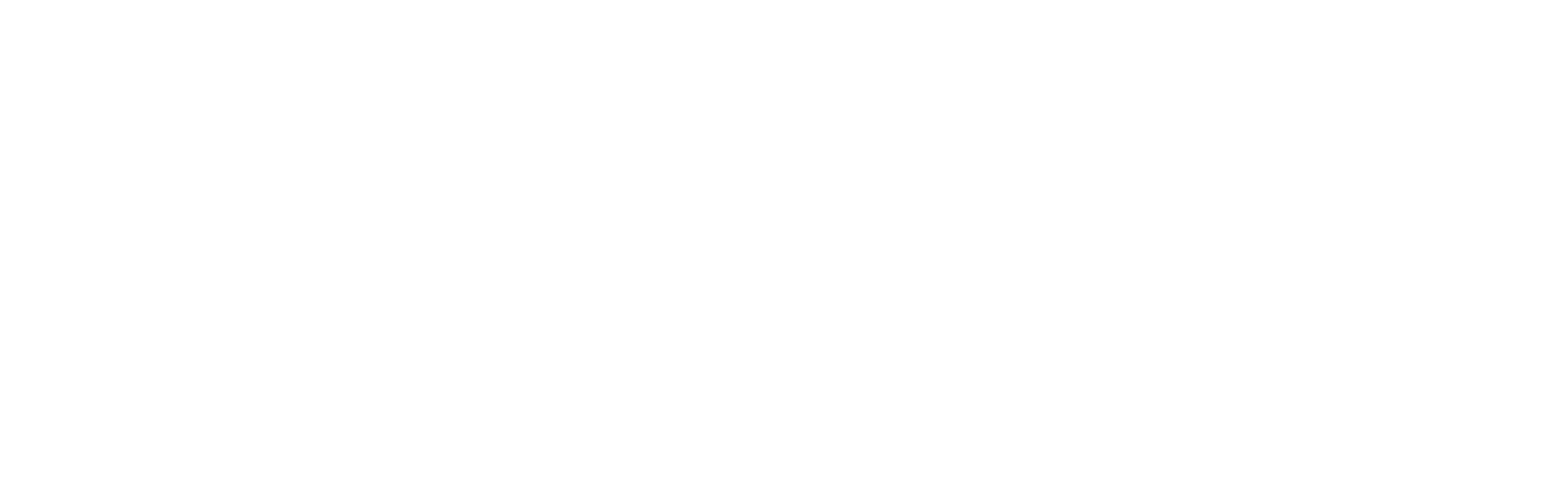\caption{Handle diagrams for the slice disk exterior in Proposition~\ref{prop:15n-pi1}}
\label{fig:15n-pi1}
\end{figure}

\subsection{An infinite family of examples.} Consider the left side of Figure~\ref{fig:positron-family}, which depicts an infinite family of contractible 4-manifolds $W_n$ with $n \geq 0$ (so that each box contains  \emph{left}-handed full twists). The positron cork corresponds to the case $n=0$. It is straightforward to use the methods of \cite{akbulut-matveyev} to show that each $W_n$ is a cork, where the involution exchanges the dotted and zero-labeled components. (In fact, a stronger conclusion is established in \cite[Theorem~1.15]{dhm:corks}: this involution does not extend smoothly over \emph{any} homology ball bounded by $\partial W_n$.) The boundary $\partial W_n$ is the double branched cover of the knot $K_n$ showed in the third frame of Figure~\ref{fig:positron-family}. Each knot  $K_n$  bounds a slice disk $D_n$ analogous to the one shown on the right side of Figure~\ref{fig:invertible}.

\begin{figure}\center
\def\svgwidth{\linewidth}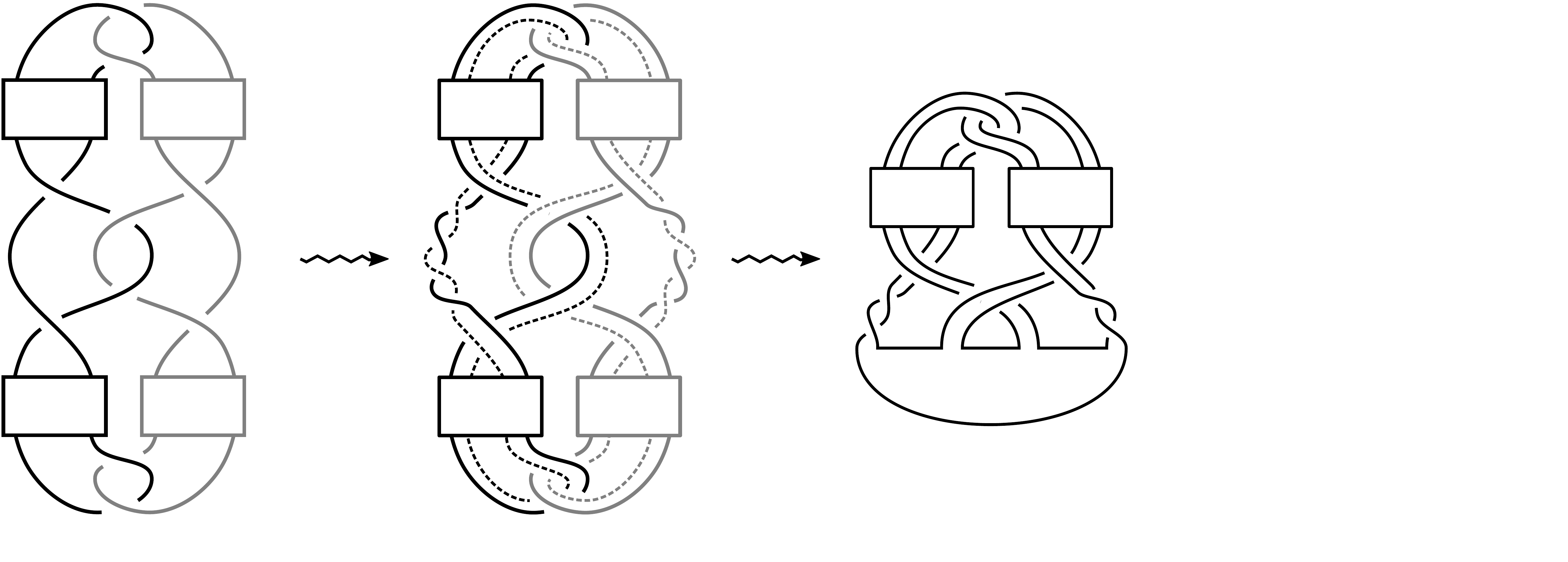\caption{An infinite family of examples generalizing the positron cork.}\label{fig:positron-family}
\end{figure}

\begin{proof}[Proof of Theorem~\ref{thm:cork}]
Fix $n \geq 0$ and let $W_n$, $K_n$, and $D_n$ be as above. To show that $W_n$ is the double branched cover of the pair $(B^4,D_n)$, we first draw and then simplify a handle diagram for the exterior of $D_n \subset B^4$; see Figure~\ref{fig:dbc-family}. For later use, we track a pair of curves $\gamma$ and $\gamma'$ in $S^3 \setminus K_n$. From here, we produce a handle diagram for the double branched cover $\Sigma(B^4,D_n)$ in part (a) of Figure~\ref{fig:dbc-family}, while the remaining steps of Figure~\ref{fig:dbc-family} show that $\Sigma(B^4,D_n)$ is diffeomorphic to the cork $W_n$.

Now observe that there is an involution of the pair $(S^3,K_n)$ that exchanges the two curves $\gamma$ and $\gamma'$ in Figure~\ref{fig:Dn-exterior}(a). A pair of lifts $\tilde \gamma$ and $\tilde \gamma'$ of these curves are shown in part (a) of Figure~\ref{fig:dbc-family}, and we see that these lifts are isotopic to meridians of the 1- and 2-handle curves shown in part (e) of the figure. Therefore the aforementioned symmetry of $(S^3,K_n)$ lifts to the cork-twisting involution of $\partial W_n$ that exchanges the roles of the dotted and 0-labeled curves, as desired.

\begin{figure}\center
\smallskip
\def\svgwidth{.925\linewidth}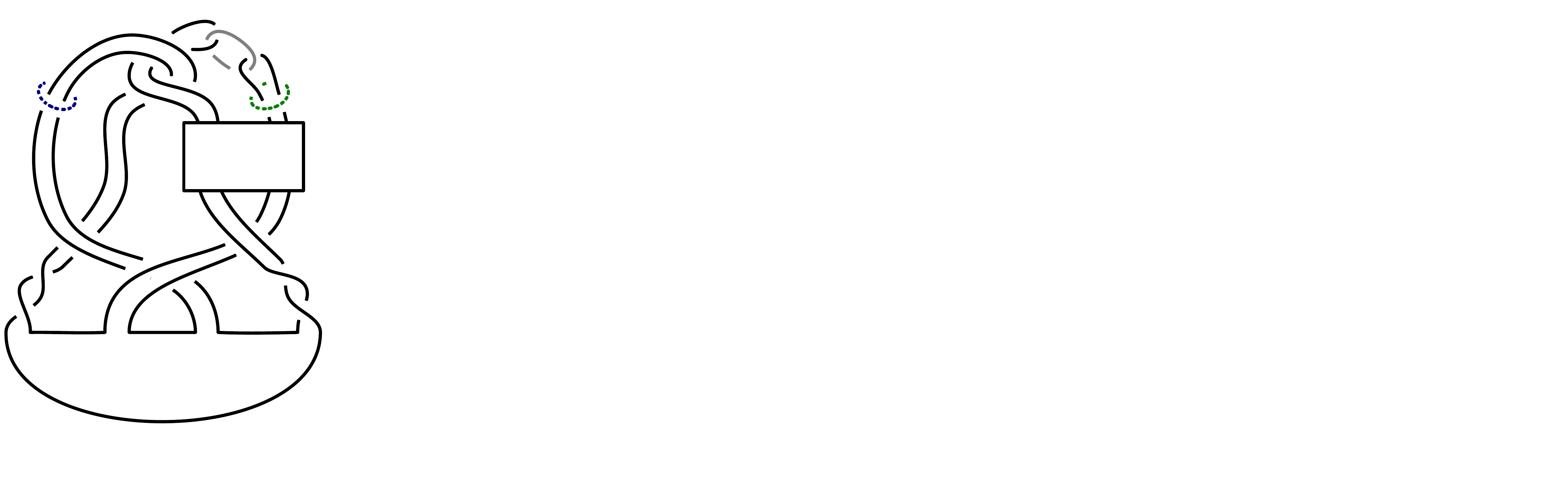\caption{Handle diagrams for the exterior of $D_n \subset B^4$; (c) is obtained from (b) by sliding one 1-handle over the other, and the remaining steps are isotopies.  }\label{fig:Dn-exterior}
\end{figure}

\begin{figure}\center
\def\svgwidth{\linewidth}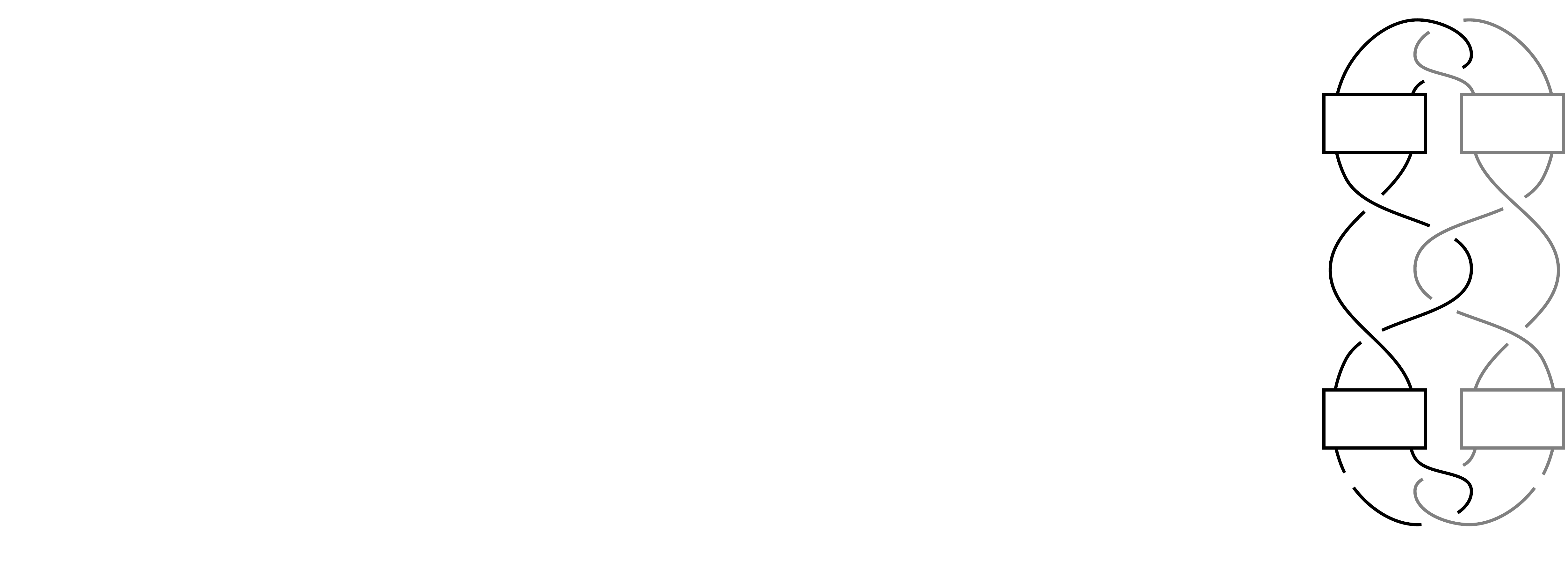\caption{Handle diagrams for the double branched cover $\Sigma(B^4,D_n)$; the first step is a 2-handle slide, the second consists of a 2-handle slide followed by a 1-/2-handle cancellation, and the remaining steps are isotopies.}\label{fig:dbc-family}
\end{figure}

When $n=0$, the disk $D_n$ is the disk $D$ from Figure~\ref{fig:positron}, which we show is isotopic to a compact piece of a complex curve in Theorem~\ref{thm:braid} using positively braided surfaces. For $n \geq 1$, we do not give an explicit description of the disk $D_n$ as a positively braided surface. However, the interested reader can give an alternate (but less explicit) construction using the arguments from \S3-4 of \cite{hayden:disks}. We outline the argument: The right side of Figure~\ref{fig:positron-family} depicts the knot $K_n$ as a transverse knot in the standard contact 3-sphere. Performing a symplectic band move (as in \cite[Lemma 3.2]{hayden:disks}) at the upper-rightmost crossing (labeled with a star) yields a two-component transverse unlink that can be capped off with a pair of symplectic disks; this yields a symplectic disk in the standard symplectic 4-ball bounded by $K_n$. From here, we may combine the work of  Rudolph \cite{rudolph:qp-alg,rudolph:braided-surface} and Boileau-Orevkov \cite{bo:qp} to upgrade the symplectic disk to a compact piece of an algebraic curve in $\cc^2$. 
\end{proof}

\vspace{-.375in}

\section{Exotic surfaces in the open 4-ball}\label{sec:ball}

The goal of this section is to prove Theorem~\ref{thm:ball}. The theorem's focus on surfaces other than the disk reflects the fact that our argument will involve studying  second homology classes in the double branched covers of $B^4$ over our exotic surfaces. For disks in $B^4$, the double branched cover is a (rational) homology 4-ball, hence contains no (non-torsion) homologically essential surfaces. This in mind, we will attach bands to the original disks $D$ and $D'$ to produce larger surfaces whose double branched covers have nontrivial (and non-torsion) second homology.

\begin{figure}[b!]\center
\smallskip
\def\svgwidth{\linewidth}
\begingroup%
  \makeatletter%
  \providecommand\color[2][]{%
    \errmessage{(Inkscape) Color is used for the text in Inkscape, but the package 'color.sty' is not loaded}%
    \renewcommand\color[2][]{}%
  }%
  \providecommand\transparent[1]{%
    \errmessage{(Inkscape) Transparency is used (non-zero) for the text in Inkscape, but the package 'transparent.sty' is not loaded}%
    \renewcommand\transparent[1]{}%
  }%
  \providecommand\rotatebox[2]{#2}%
  \newcommand*\fsize{\dimexpr\f@size pt\relax}%
  \newcommand*\lineheight[1]{\fontsize{\fsize}{#1\fsize}\selectfont}%
  \ifx\svgwidth\undefined%
    \setlength{\unitlength}{1537.36395408bp}%
    \ifx\svgscale\undefined%
      \relax%
    \else%
      \setlength{\unitlength}{\unitlength * \real{\svgscale}}%
    \fi%
  \else%
    \setlength{\unitlength}{\svgwidth}%
  \fi%
  \global\let\svgwidth\undefined%
  \global\let\svgscale\undefined%
  \makeatother%
  \begin{picture}(1,0.20810155)%
    \lineheight{1}%
    \setlength\tabcolsep{0pt}%
    \put(0,0){\includegraphics[width=\unitlength,page=1]{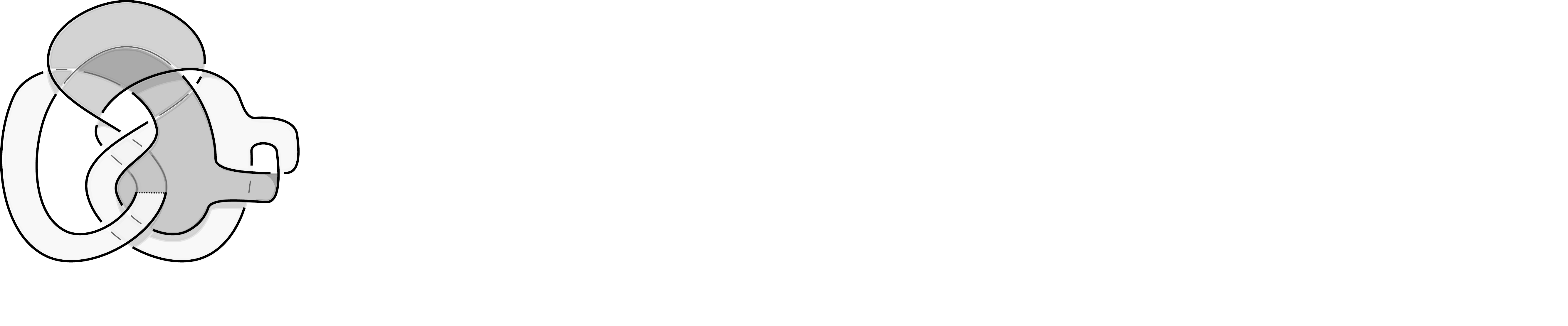}}%
    \put(0.07640009,0.00343976){\makebox(0,0)[lt]{\lineheight{1.25}\smash{\begin{tabular}[t]{l}$A$\end{tabular}}}}%
    \put(0,0){\includegraphics[width=\unitlength,page=2]{building_blocks.pdf}}%
    \put(0.30922891,0.00343976){\makebox(0,0)[lt]{\lineheight{1.25}\smash{\begin{tabular}[t]{l}$A'$\end{tabular}}}}%
    \put(0,0){\includegraphics[width=\unitlength,page=3]{building_blocks.pdf}}%
    \put(0.57366412,0.00343976){\makebox(0,0)[lt]{\lineheight{1.25}\smash{\begin{tabular}[t]{l}$T$\end{tabular}}}}%
    \put(0.85783405,0.00343976){\makebox(0,0)[lt]{\lineheight{1.25}\smash{\begin{tabular}[t]{l}$T'$\end{tabular}}}}%
    \put(0,0){\includegraphics[width=\unitlength,page=4]{building_blocks.pdf}}%
  \end{picture}%
\endgroup%
\caption{Annuli and tori obtained by adding adding bands to the disks $D$ and $D'$.}\label{fig:building-blocks}
\end{figure}

\subsection{Building blocks and their branched covers.} Consider the  annuli $A$ and $A'$ and the tori $T$ and $T'$ shown in Figure~\ref{fig:building-blocks}, which are obtained from the disks $D$ and $D'$ by attaching bands to $K=\partial D=\partial D'$. Since $D$ and $D'$ are topologically isotopic rel boundary, it follows that $A$ and $A'$ are topologically isotopic rel boundary, as are $T$ and $T'$. However, we will show that $A$ and $A'$ (and $T$ and $T'$)   are \emph{not} smoothly isotopic --- in fact, they are not equivalent under any diffeomorphisms of $B^4$.

\begin{prop}\label{prop:spheres}
The double branched covers of $B^4$ over $A'$ and $T'$ contain smoothly embedded 2-spheres of square $-2$.
\end{prop}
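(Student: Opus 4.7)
The plan is to exhibit the claimed $-2$ spheres directly via Kirby calculus on the double branched covers, starting from the handle diagram for $W = \Sigma(B^4,D')$ computed in the proof of Theorem~\ref{thm:cork}. Attaching a band to the branch surface in $B^4$ lifts to attaching a 2-handle in the double branched cover: the attaching circle is the lift to $\Sigma(S^3,K) = \partial W$ of the band's cocore (an arc with endpoints on $K$ and interior in $S^3 \setminus K$, so two preimage arcs glued at the two unique lifts of the endpoints), and the framing is inherited from the band's natural framing. Since $A'$ is obtained from $D'$ by attaching one band and $T'$ by attaching two bands, we get
\[
\Sigma(B^4,A') = W \cup h_1 \qquad \text{and} \qquad \Sigma(B^4,T') = W \cup h_1 \cup h_2,
\]
where each $h_i$ is a 2-handle with the attaching data just described.

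The first concrete step is to compute these attaching data explicitly, by tracing each band's cocore through the Kirby moves of Figures~\ref{fig:Dn-exterior}--\ref{fig:dbc-family} that identify $\Sigma(B^4,D')$ with $W$. Because the cocores are short arcs visibly drawn alongside $K$ in Figure~\ref{fig:building-blocks}, this is a direct, if tedious, extension of the branched-cover diagrammatics already established in Theorem~\ref{thm:cork}.

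The second step is to locate the $-2$ spheres in the enlarged Kirby diagrams. The bands defining $A'$ and $T'$ are chosen so that, after simplification, at least one of the lifted 2-handles has attaching circle bounding an embedded disk $\Delta$ in $\partial W$ (or, more generally, visible in the simplified diagram of $W$) whose surface framing differs from the 2-handle's framing by exactly $-2$. Capping the core of that 2-handle with $\Delta$ yields a smoothly embedded 2-sphere of self-intersection $-2$ in the branched cover, and the proposition follows.

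The main obstacle is the diagrammatic bookkeeping: correctly lifting the bands of Figure~\ref{fig:building-blocks} through the branched-cover construction from Theorem~\ref{thm:cork} and then simplifying the augmented handle diagram enough that the $(-2)$-framed compressible circle becomes manifest. I expect no conceptual surprises, just a careful Kirby-calculus computation; the real reason to care about these spheres will then be their use with the Lisca--Mati\'c adjunction inequality in the proof of Theorem~\ref{thm:ball}.
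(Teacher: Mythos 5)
Your plan is conceptually sound in its opening move: a band attached to the branch surface lifts to a 2-handle in the double branched cover (the Montesinos trick), so $\Sigma(B^4,A')$ and $\Sigma(B^4,T')$ are indeed $W = \Sigma(B^4,D')$ plus one or two 2-handles. But the proof stops at the point where it would actually have to earn the conclusion. You assert that "after simplification, at least one of the lifted 2-handles has attaching circle bounding an embedded disk $\Delta$ in $\partial W$ whose surface framing differs from the 2-handle's framing by exactly $-2$," and then defer the verification to an unperformed Kirby-calculus computation. That is exactly the content of the proposition, so as written this is a plan rather than a proof. Moreover, nothing in the plan identifies \emph{what feature} of the bands for $A'$ and $T'$ (as opposed to $A$ and $T$) forces a $-2$ sphere to appear; without that, you would have no reason to expect the computation to succeed before doing it.

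The paper sidesteps the entire global computation with a local observation that you're missing: the band for $A'$ is attached so that $A'$ visibly contains a standard \emph{Hopf annulus} as an embedded subsurface (Figure~\ref{fig:hopf}). The core curve of that Hopf annulus is an unknot bounding an obvious disk in $S^3$; pushing the disk's interior into $B^4$ makes it meet $A'$ only along the core curve, and the preimage of this disk in the double branched cover is a smoothly embedded 2-sphere of square $-2$. This gives the sphere with no handle diagram, no tracing of cocores, and no simplification, and the same picture works verbatim for $T'$. It also explains the asymmetry your plan leaves mysterious: the Hopf annulus sits inside $A'$ (and $T'$) because of how the band interacts with $D'$, and no such subsurface is available in $A$ or $T$. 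Your route could be pushed through — the handle-calculus machinery is all in \S\ref{sec:braid} and the appendix, where the paper does carry out exactly this sort of computation for $\Sigma(B^4,A)$ and $\Sigma(B^4,T)$ in Lemma~\ref{lem:DBCs} — but for the purpose of \emph{producing} a sphere (as opposed to ruling one out, which genuinely requires global information and the Lisca--Mati\'c inequality), the local Hopf-annulus argument is both shorter and more robust.
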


\begin{proof}
As illustrated in Figure~\ref{fig:hopf}(a), the annulus $A'$ contains the standard Hopf annulus as an embedded subsurface.  The core curve of the Hopf annulus is an unknot that bounds a standard disk as shown in Figure~\ref{fig:hopf}(b); the disk can be made to intersect the Hopf annulus only along the core curve by pushing the disk's interior into the 4-ball. The disk then lifts to a smoothly embedded 2-sphere of square $-2$ in the double branched cover of $B^4$ over the Hopf annulus, hence into the double branched cover of $A'$ as well. The same argument applies to $T'$.
\end{proof}

\begin{figure}\center
\def\svgwidth{.66\linewidth}
\begingroup%
  \makeatletter%
  \providecommand\color[2][]{%
    \errmessage{(Inkscape) Color is used for the text in Inkscape, but the package 'color.sty' is not loaded}%
    \renewcommand\color[2][]{}%
  }%
  \providecommand\transparent[1]{%
    \errmessage{(Inkscape) Transparency is used (non-zero) for the text in Inkscape, but the package 'transparent.sty' is not loaded}%
    \renewcommand\transparent[1]{}%
  }%
  \providecommand\rotatebox[2]{#2}%
  \newcommand*\fsize{\dimexpr\f@size pt\relax}%
  \newcommand*\lineheight[1]{\fontsize{\fsize}{#1\fsize}\selectfont}%
  \ifx\svgwidth\undefined%
    \setlength{\unitlength}{940.2692179bp}%
    \ifx\svgscale\undefined%
      \relax%
    \else%
      \setlength{\unitlength}{\unitlength * \real{\svgscale}}%
    \fi%
  \else%
    \setlength{\unitlength}{\svgwidth}%
  \fi%
  \global\let\svgwidth\undefined%
  \global\let\svgscale\undefined%
  \makeatother%
  \begin{picture}(1,0.4508199)%
    \lineheight{1}%
    \setlength\tabcolsep{0pt}%
    \put(0,0){\includegraphics[width=\unitlength,page=1]{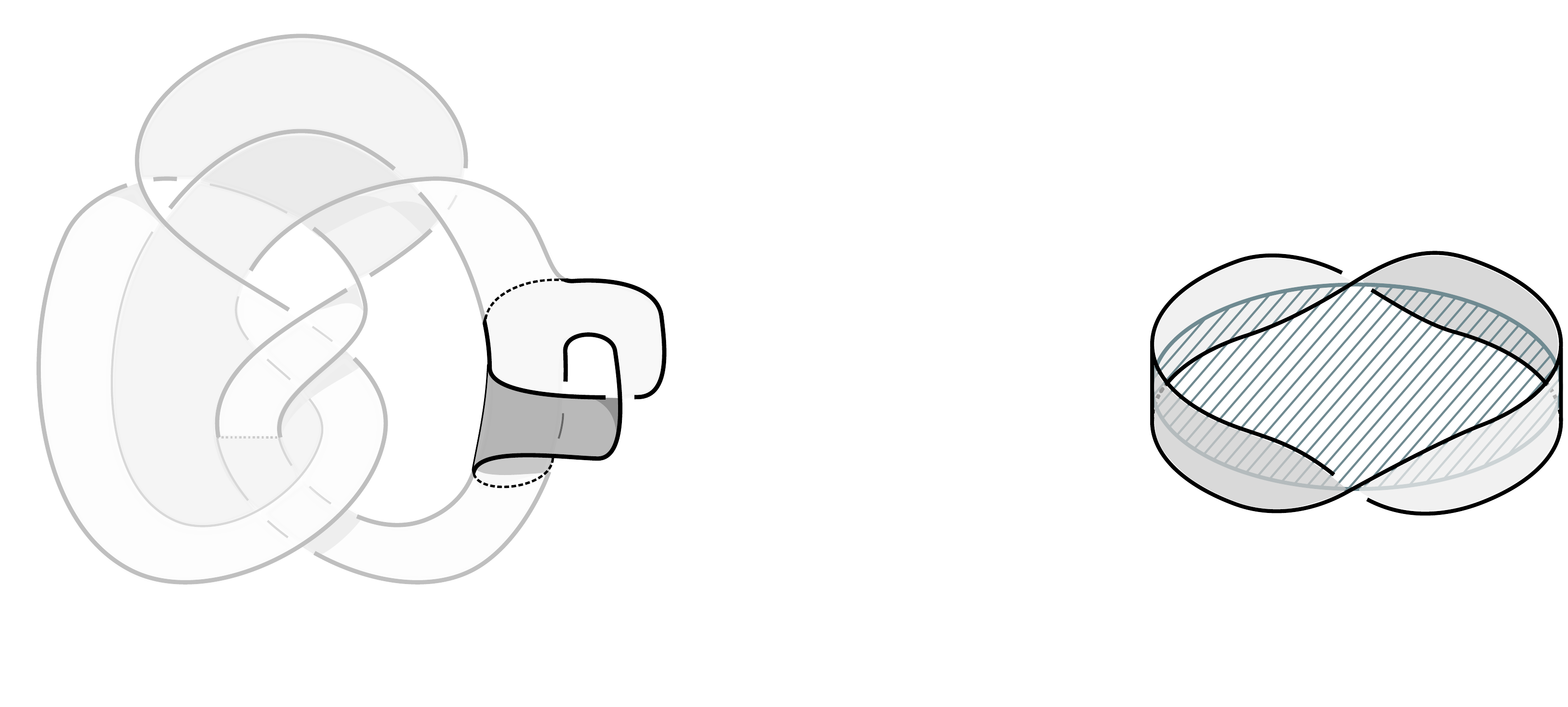}}%
    \put(0.18482555,0.00322787){\makebox(0,0)[lt]{\lineheight{1.25}\smash{\begin{tabular}[t]{l}(a)\end{tabular}}}}%
    \put(0.84573033,0.00322787){\makebox(0,0)[lt]{\lineheight{1.25}\smash{\begin{tabular}[t]{l}(b)\end{tabular}}}}%
  \end{picture}%
\endgroup%

\caption{Locating a Hopf annulus inside $A'$, as well as a disk that lifts to a 2-sphere of square $-2$ in the double branched cover of $B^4$ along the Hopf annulus.}\label{fig:hopf}
\end{figure}

\begin{prop}\label{prop:no-spheres}
The double branched covers of $B^4$ over $A$ and $T$ do not contain smoothly embedded 2-spheres of square $-2$.
\end{prop}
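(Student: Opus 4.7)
The plan is to equip the double branched covers $X_A := \Sigma(B^4, A)$ and $X_T := \Sigma(B^4, T)$ with Stein structures, by realizing $A$ and $T$ as compact pieces of algebraic curves in $B^4 \subset \cc^2$, and then apply the Lisca-Matic adjunction inequality \cite{lisca-matic} to rule out smoothly embedded $-2$ spheres.

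First, using the quasipositive braid technology developed in Section~\ref{sec:braid}, I would extend the factorization $\beta$ of $D$ from Theorem~\ref{thm:factor} to quasipositive factorizations whose positively braided surfaces are isotopic rel boundary to $A$ and $T$, respectively. The added bands correspond to additional conjugated positive Artin generators, so they are algebraic in nature and fit cleanly into the braided surface framework. This exhibits $A$ and $T$ as compact pieces of algebraic curves, so $X_A$ and $X_T$ inherit Stein structures as double branched covers of a Stein domain along a Stein surface.

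Next, I would apply the Lisca-Matic adjunction inequality to a hypothetical smoothly embedded $-2$ sphere $S$ in $X_A$ (or $X_T$):
\[ -2 \;=\; 2g(S) - 2 \;\geq\; [S]^2 + |\langle c_1, [S]\rangle| \;=\; -2 + |\langle c_1, [S]\rangle|,\]
which forces $\langle c_1, [S]\rangle = 0$. It then suffices to show that no primitive homology class of square $-2$ lies in the kernel of $c_1$ of the chosen Stein structure. Note that $X_A$ and $X_{A'}$ (likewise $X_T$ and $X_{T'}$) share an intersection form, since the pairs $(B^4, A)$ and $(B^4, A')$ are topologically equivalent; by Proposition~\ref{prop:spheres} the relevant square $-2$ classes do exist, so the obstruction must be purely smooth and visible through $c_1$.

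Finally, I would carry out an explicit Stein handle-body computation, refining the Kirby diagram for $X_D$ in Figure~\ref{fig:dbc-family}(e) by adjoining 2-handles that correspond to the lifts of the added bands. These handles are attached along Legendrian knots whose Thurston-Bennequin and rotation numbers are dictated by the algebraic realization of $A$ (and $T$). Reading off $c_1$ from the rotation numbers and comparing with the intersection form then shows $c_1$ does not vanish on any primitive square $-2$ class. The main obstacle is this last step: the framework guarantees the right Stein structures exist, but producing the required Legendrian realizations of the attaching circles and pinning down their rotation numbers precisely enough to determine $c_1$ requires careful book-keeping of the complex geometry through the branched cover.
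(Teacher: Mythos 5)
Your proposal follows the same overall strategy as the paper: realize the branched covers as Stein domains, then invoke the Lisca--Mati\'c adjunction inequality to rule out a smoothly embedded sphere of square $-2$ by showing $c_1$ is nonzero on every class of square $-2$. The paper carries this out in exactly the way you anticipate would be necessary: Lemma~\ref{lem:DBCs} (proved in the appendix via explicit Kirby calculus starting from the braided-surface picture of $A$ and $T$, Figures~\ref{fig:exterior}--\ref{fig:torus-dbc}) produces handle diagrams whose attaching circles are realized as Legendrian knots with framing $tb-1$, and the rotation numbers are read off directly from those diagrams using Gompf's formula $\langle c_1(W),h_i\rangle = r(\mathcal{K}_i)$. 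This bypasses the harder route you flag of tracking the complex structure through the branched cover of the algebraic curve --- one only needs \emph{some} Stein-realizable handle diagram, not the particular one the algebraic picture hands you. Two small points where the paper fills gaps you skip over: first, your proposal would be cleaner starting the Kirby calculus from the braided surface of $A$ (as the paper does) rather than trying to adjoin handles to the cork diagram of Figure~\ref{fig:dbc-family}(e), since lifting a braided band through a branched cover is a routine operation; second, the paper explicitly verifies that $\pm h_1$ is the \emph{unique} square $-2$ class (by solving $c_1^2 + 2c_1c_2 + 3c_2^2 = 1$ for the torus case) before applying the adjunction inequality --- without this uniqueness step you cannot conclude, so it is a necessary ingredient rather than a formality.
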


To prove the proposition, we first show that these double branched covers admit simple handle decompositions.

\begin{lem}\label{lem:DBCs}
The double branched covers of $B^4$ over $A$ and $T$ are represented by the handle diagrams shown in Figure~\ref{fig:DBCs}.
\end{lem}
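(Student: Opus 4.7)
The plan is to start from the handle diagram for $\Sigma(B^4,D)=W_0$ (the positron cork) produced in the proof of Theorem~\ref{thm:cork} (part (e) of Figure~\ref{fig:dbc-family}) and then add handles corresponding to the bands that promote $D$ to $A$ (one band) and to $T$ (two bands). The basic principle is that if $F\subset B^4$ is a properly embedded surface with connected boundary and $F'=F\cup b$ is obtained by attaching a band $b\subset S^3$ to $\partial F$, then
\[
\Sigma(B^4,F') \;\cong\; \Sigma(B^4,F) \;\cup\; h^2,
\]
where $h^2$ is a $0$-framed 2-handle attached along the lift of the cocore arc $\alpha\subset S^3$ of $b$. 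Here $\alpha$ is an arc with both endpoints on $\partial F$, so it lifts to a closed curve $\tilde\alpha$ in the boundary cover $\Sigma(S^3,\partial F)=\partial\Sigma(B^4,F)$, with its natural framing from a pushoff in $S^3\setminus \partial F$.

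First I would locate the cocore arcs of the bands used to build $A$ and $T$ from $D$ directly in the picture of $(S^3,K)$ in Figure~\ref{fig:building-blocks}. I would then track these arcs backwards through the isotopy of Figures~\ref{fig:iso-K} and \ref{fig:Dn-exterior}, so that they are drawn in the same frame as the handle diagram for $\Sigma(B^4,D)$ appearing in Figure~\ref{fig:dbc-family}(a). At this stage each cocore arc $\alpha_i$ becomes an explicit arc with endpoints on the branch set, and its lift $\tilde\alpha_i$ is a concrete loop in the boundary of $W_0$. Attaching the corresponding $2$-handle(s) with the natural framing yields an explicit, if somewhat complicated, handle presentation of $\Sigma(B^4,A)$ and of $\Sigma(B^4,T)$.

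The remaining task is handle calculus: one simplifies the resulting diagrams by the same moves used to pass from Figure~\ref{fig:dbc-family}(a) to (e) --- 2-handle slides over the $0$-framed handle coming from the disk, a 1/2-handle cancellation, and isotopies --- while simultaneously dragging the new $2$-handle(s) through these moves. If the moves are executed in the same order as in the proof of Theorem~\ref{thm:cork}, the $W_0$-part of the diagram returns to the standard positron form and the new $2$-handles sit on top of it in a controlled way, producing the diagrams in Figure~\ref{fig:DBCs}.

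The expected main obstacle is Step~2 combined with the handle calculus: accurately tracing the cocore arcs through the isotopies of $(S^3,K)$ and through the subsequent $2$-handle slides so that the attaching circles and framings of the new handles are correct. This is mostly a diagrammatic bookkeeping exercise, but the framing check, in particular, has to be done carefully, since slides of the new $2$-handle over the $0$-framed handle corresponding to $D$ can in principle change its framing. Once the bookkeeping is done honestly, the final match with Figure~\ref{fig:DBCs} should be immediate.
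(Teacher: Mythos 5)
Your approach to Lemma~\ref{lem:DBCs} is genuinely different from the paper's, and both routes are legitimate. The paper does not build on $\Sigma(B^4,D)$ at all: it first recasts $A$ and $T$ as positively braided surfaces (Theorem~\ref{thm:braided}), then draws a handle diagram for the surface exterior directly from the braided picture and passes to the double branched cover from there (Figures~\ref{fig:exterior}--\ref{fig:torus-dbc} in the appendix). You instead propose to reuse the $W_0 = \Sigma(B^4,D)$ computation from the proof of Theorem~\ref{thm:cork} and attach one new $2$-handle per band. Both routes involve comparable diagrammatic bookkeeping; the braided-surface route has the advantage of producing the Legendrian Stein diagram of Figure~\ref{fig:DBCs} somewhat systematically, while your incremental route is conceptually aligned with how $A$ and $T$ were built from $D$ in the first place.

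One point needs to be corrected before the bookkeeping can go through. You describe the new $2$-handle as ``$0$-framed along the lift of the cocore arc,'' while also saying it carries ``its natural framing from a pushoff in $S^3\setminus\partial F$.'' These are not synonymous, and neither phrasing pins down the framing. The correct framing on $\tilde\alpha$ is the lift of the framing of $\alpha$ determined by the band itself --- push $\alpha$ off to a parallel cocore inside $b$, not off in an arbitrary direction in $S^3\setminus\partial F$ --- and this depends on how the band twists relative to $\partial F$. The bands in play are not untwisted: the one taking $D$ to $A$ corresponds to a $\sigma_2$ half-twist (see the proof of Theorem~\ref{thm:factor}), and those used in building $T$ carry multiple positive half-twists (Figure~\ref{fig:twisted-band}); accordingly, the final framings in Figure~\ref{fig:DBCs} come out to $-2$ and $-6$, not $0$. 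You rightly flag the framing check during the subsequent slides as the main pitfall, but the framing is already nontrivial at the moment the handle is born, so ``$0$-framed'' should be replaced by the band framing in your Step~1 or the entire calculation will start on the wrong foot.
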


We postpone the proof until we discuss braided surfaces in \S\ref{subsec:revisit}. The attaching curves in Figure~\ref{fig:DBCs} are drawn as Legendrian knots with respect to the standard contact structure on $S^3$. Moreover, for each attaching curve $\mathcal{K}_i$, the 2-handle framing is one less than the  Thurston-Bennequin number $tb(\mathcal{K}_i)$, which can be computed diagrammatically as
$$tb(\mathcal{K}_i)= \mathrm{writhe}- \# \mathrm{\ right \  cusps}.$$ 
By \cite{yasha:stein,yasha:knots} (cf., \cite{GompfStipsicz4,gompf:stein}), the resulting 4-manifolds each admit a corresponding Stein structure.  Therefore we may appeal to a general constraint on embedded surfaces in Stein domains due to Lisca and Mati\'c.

\begin{figure}\center
\smallskip
\def\svgwidth{.66\linewidth}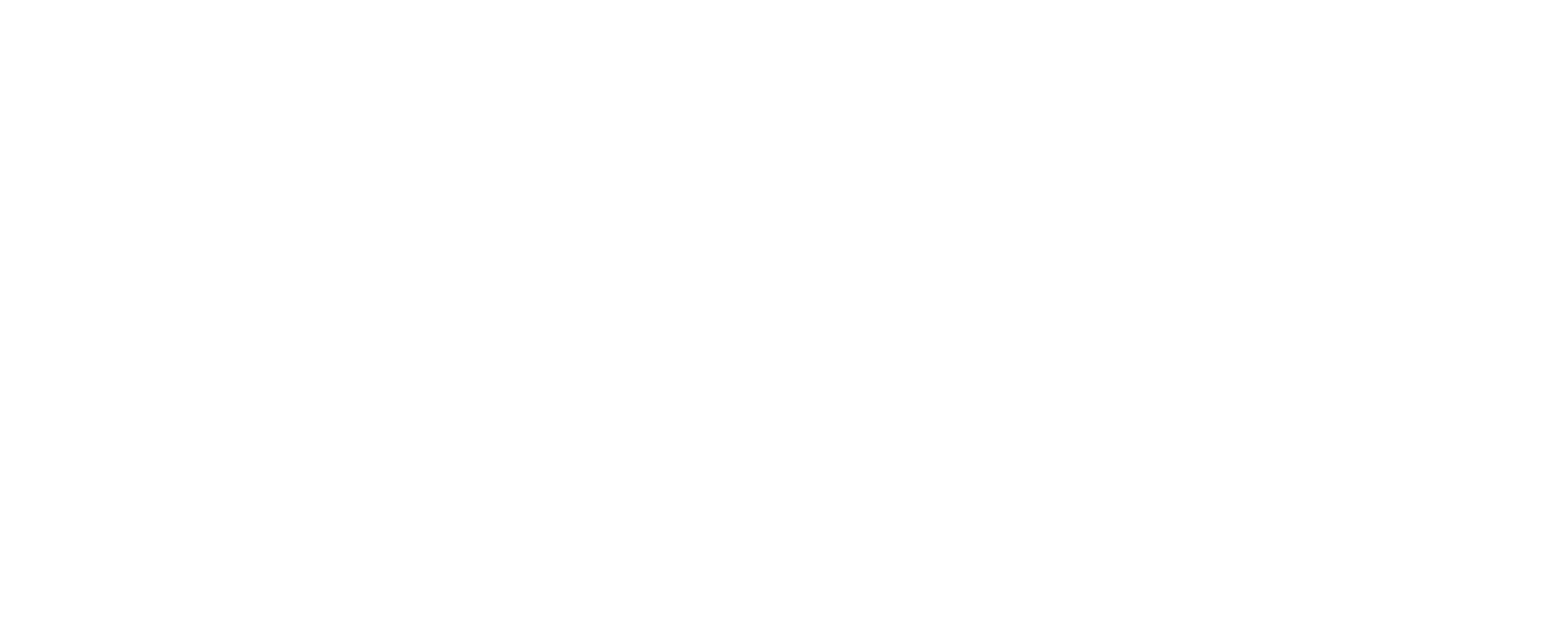
\caption{Stein handle diagrams for the double branched covers of $B^4$ over $A$ and $T$.}\label{fig:DBCs}
\end{figure}

\begin{thm}[\cite{lisca-matic}]\label{thm:lisca-matic-2}
If $S$ is a smoothly embedded surface in a Stein domain $W$ that represents a nontrivial class in $H_2(W)$, then
$$[S]\cdot [S] + \left| \langle c_1(W) , [S]\rangle\right | \leq 2g(S)-2.$$
\end{thm}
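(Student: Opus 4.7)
The plan is to reduce the inequality to the generalized adjunction inequality for surfaces in closed 4-manifolds with nontrivial Seiberg-Witten invariants, by exhibiting $W$ as a holomorphic subdomain of a suitable closed complex surface. This is the strategy of Lisca and Mati\'c's original argument.

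The heart of the proof is the embedding theorem: every Stein domain $W$ admits a holomorphic embedding into a minimal closed complex projective surface $X$ of general type with $p_g(X) > 0$. I would establish this by first using the defining property of Stein manifolds to holomorphically embed $W \hookrightarrow \cc^N$, then projecting generically to $\cc P^3$ and taking the projective closure of the image, resolving the singularities (which lie outside the image of $W$) to obtain a smooth projective surface, and finally taking iterated ramified covers branched along high-degree smooth curves disjoint from $W$.  The cover construction is tuned to increase $p_g$ and force the resulting surface to be minimal of general type while preserving the holomorphic embedding of $W$.

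Given such an embedding $W \hookrightarrow X$, the complex tangent bundles agree on $W$, so $K_X|_W = -c_1(W)$. Since $X$ is a minimal K\"ahler surface of general type with $p_g > 0$, Witten's calculation of Seiberg-Witten invariants for K\"ahler surfaces identifies $\pm K_X$ as basic classes with $b_2^+(X) > 1$. For a smoothly embedded surface $S \subset W$ whose class remains nonzero in $H_2(X)$, the generalized adjunction inequality (Kronheimer-Mrowka for $[S]^2 > 0$, extended by Morgan-Szab\'o-Taubes and Ozsv\'ath-Szab\'o for the remaining cases) applied to the basic class $K_X$ yields
\begin{equation*}
[S]\cdot[S] + |\langle K_X,[S]\rangle | \leq 2g(S) - 2,
\end{equation*}
and substituting $K_X|_W = -c_1(W)$ produces the desired inequality.

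The main obstacle is the possibility that $[S]$ becomes null-homologous in $X$ even though $[S] \neq 0$ in $H_2(W)$, in which case the adjunction inequality above does not directly apply to $S \subset X$. To address this, I would refine the embedding step to guarantee injectivity of $H_2(W) \to H_2(X)$ on the class $[S]$: the flexibility in choosing the branching divisors for the iterated covers can be exploited to ensure that no 3-chain in $X \setminus W$ bounds $[S]$. Equivalently, one can arrange the complement $X \setminus W$ to have sufficiently controlled second homology (for instance by restricting to simply connected branching divisors of high enough degree) so that the inclusion-induced map on $H_2$ is injective on the Stein domain. With this arranged, the adjunction inequality transfers to $W$ and the proof concludes.
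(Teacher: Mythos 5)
The paper does not prove this statement---it quotes it from Lisca--Mati\'c---so your proposal is really an attempt to reconstruct their argument, and your overall strategy is indeed theirs: embed $W$ holomorphically into a minimal surface $X$ of general type with $b_2^+(X)>1$, identify $c_1(W)$ with $-K_X|_W$, invoke Witten's computation that $\pm K_X$ are Seiberg--Witten basic classes, and apply the generalized adjunction inequality. Two of your steps, however, have genuine gaps. The first is the compactification. The image of a holomorphic embedding $W\hookrightarrow\cc^N$ is a compact set with boundary; it has no meaningful ``projective closure,'' and its Zariski closure need not be a surface containing $W$ as a domain. The missing ingredient---which is the technical heart of Lisca--Mati\'c's embedding theorem---is the algebraic approximation theorem of Demailly--Lempert--Shiffman: one first approximates the embedding by a Nash-algebraic one, so that $W$ lands inside a smooth affine algebraic surface, whose projective closure (with singularities at infinity, away from $W$, then resolved) does make sense. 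Relatedly, a generic linear projection of a complex surface to $\cc^3$ or $\cc P^3$ is not an embedding: the double-point locus is generically a complex curve, so that reduction fails as stated.

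The second gap is your treatment of the injectivity of $H_2(W)\to H_2(X)$. The proposed fix---tuning the branching divisors, or choosing ``simply connected branching divisors of high degree'' to control $H_2(X\setminus W)$---is unsubstantiated (note that smooth high-degree curves are never simply connected), and it is not how this issue is handled. The efficient observation is that both $[S]\cdot[S]$ and $\langle c_1(W),[S]\rangle=\langle c_1(X),i_*[S]\rangle$ are computed by restriction from $X$, so if either quantity is nonzero then $i_*[S]\neq 0$ in $H_2(X;\rr)$ automatically and the adjunction inequality in $X$ applies; only the degenerate case $[S]^2=0$, $\langle c_1(W),[S]\rangle=0$, $g(S)=0$ requires separate care. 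Finally, a point that matters for how the theorem is used in this paper: the classes in question are spheres of negative square, and the adjunction inequality for essential spheres with $[S]^2\le -2$ in a simple-type manifold is due to Fintushel--Stern (immersed spheres); the references you cite cover $g\ge 1$ and nonnegative square.
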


To wield this obstruction, we require two additional preliminaries. Recall that if a 4-manifold $W$ is obtained from $B^4$ by attaching 2-handles along a framed link $L$, then $H_2(W) \cong \zz^{\ell}$, where $\ell$ is the number of components of $L$. Given an orientation on $L$, there is a natural basis $h_1, \ldots, h_\ell$ for $H_2(W)$, where $h_i$ is represented by an oriented surface obtained by capping off a Seifert surface for the link component $\mathcal{K}_i$ of $L$ with the core of the 2-handle attached along $\mathcal{K}_i$.  Moreover, with respect to this basis, the intersection form $Q_W$ on $H_2(W)$ is given by the linking matrix of the framed link $L$.

Given a Stein handle diagram for $W$ (as described above),  we can also understand the first Chern class $c_1(W)$ of the associated complex structure on $W$ using the link $L$  \cite[Proposition~2.3]{gompf:stein}. In particular, with $h_i$ defined as above,
\begin{equation}\label{eqn:chern}
\langle c_1(W), h_i \rangle  = r(\mathcal{K}_i),
\end{equation}
where $r$ denotes the rotation number of the oriented Legendrian knot $\mathcal{K}_i$. The rotation number can be calculated diagrammatically as the number of left cusps at which $\mathcal{K}_i$   oriented downwards \emph{minus} the number of right cusps at which $\mathcal{K}_i$ is oriented upwards.

\begin{proof}[Proof of Proposition~\ref{prop:no-spheres}]
By Lemma~\ref{lem:DBCs} and Figure~\ref{fig:DBCs}, the 4-manifold $\Sigma(B^4,A)$ is obtained from $B^4$ by attaching a $(-2)$-framed 2-handle along a Legendrian knot $\mathcal{K}_1$ with $tb(\mathcal{K}_1)=-1$ and $r(\mathcal{K}_1)=-2$. Therefore $\Sigma(B^4,A)$ admits a Stein structure, and \eqref{eqn:chern} implies that any homologically essential surface $S$ in $\Sigma(B^4,A)$ satisfies $|\langle c_1(\Sigma(B^4,A)),[S] \rangle | \neq 0$. On the other hand, if $S$ is a smoothly embedded 2-sphere of square $-2$ in $\Sigma(B^4,A)$, then Theorem~\ref{thm:lisca-matic-2} implies that $|\langle c_1(\Sigma(B^4,A)) , [S] \rangle | = 0$. It follows that $\Sigma(B^4,A)$ contains no smoothly embedded 2-spheres of square $-2$.

Next we consider $\Sigma(B^4,T)$. Using the handle diagram from Figure~\ref{fig:DBCs}, we see that $H_2(\Sigma(B^4,T)) \cong \zz^2$ has generators $h_1, h_2$ corresponding to the link components $\mathcal{K}_1$ and $\mathcal{K}_2$. The intersection form for $\Sigma(B^4,T)$ is $$Q_{\Sigma(B^4,T)} =\begin{bmatrix} -2 & -2 \\ -2 & -6 \end{bmatrix}$$
with respect to this basis. The underlying Legendrian link components have $r(\mathcal{K}_1)=-2$ and $r(\mathcal{K}_2)=0$, so $\langle c_1(\Sigma(B^4,T)), h_1 \rangle = -2$ and $\langle c_1(\Sigma(B^4,T)), h_2 \rangle =0$.

Up to multiplication by $\pm 1$, there is a unique class $\sigma \in H_2(\Sigma(B^4,T))$ of square $-2$; indeed, if $\sigma = c_1 h_1 + c_2 h_2$, then
\begin{align*}
\sigma \cdot \sigma &= c_1(-2c_1 - 2c_2) +c_2(-2c_1-6c_2)
= -2(c_1^2+2c_1c_2 + 3c_2^2),
\end{align*}
and it is  straightforward to see that solving $\sigma \cdot  \sigma=-2$ requires $c_1=\pm 1$ and $c_2=0$. However, $|\langle c_1(\Sigma(B^4,T)),  \pm h_1 \rangle| = 2$, so Theorem~\ref{thm:lisca-matic-2} implies that $\pm h_1$ cannot be represented by a smoothly embedded 2-sphere. It follows that $\Sigma(B^4,T)$ does not contain any smoothly embedded 2-spheres of self-intersection $-2$.
\end{proof}

\begin{figure}\center
\smallskip
\def\svgwidth{\linewidth}
\begingroup%
  \makeatletter%
  \providecommand\color[2][]{%
    \errmessage{(Inkscape) Color is used for the text in Inkscape, but the package 'color.sty' is not loaded}%
    \renewcommand\color[2][]{}%
  }%
  \providecommand\transparent[1]{%
    \errmessage{(Inkscape) Transparency is used (non-zero) for the text in Inkscape, but the package 'transparent.sty' is not loaded}%
    \renewcommand\transparent[1]{}%
  }%
  \providecommand\rotatebox[2]{#2}%
  \newcommand*\fsize{\dimexpr\f@size pt\relax}%
  \newcommand*\lineheight[1]{\fontsize{\fsize}{#1\fsize}\selectfont}%
  \ifx\svgwidth\undefined%
    \setlength{\unitlength}{3073.3739081bp}%
    \ifx\svgscale\undefined%
      \relax%
    \else%
      \setlength{\unitlength}{\unitlength * \real{\svgscale}}%
    \fi%
  \else%
    \setlength{\unitlength}{\svgwidth}%
  \fi%
  \global\let\svgwidth\undefined%
  \global\let\svgscale\undefined%
  \makeatother%
  \begin{picture}(1,0.21742546)%
    \lineheight{1}%
    \setlength\tabcolsep{0pt}%
    \put(0,0){\includegraphics[width=\unitlength,page=1]{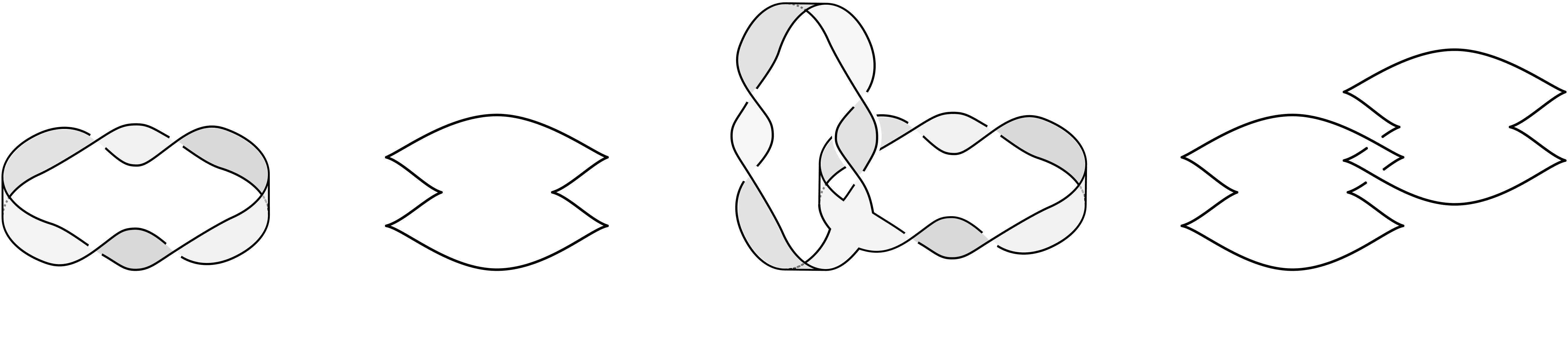}}%
    \put(0.07157473,0.00234446){\makebox(0,0)[lt]{\lineheight{1.25}\smash{\begin{tabular}[t]{l}$A_0$\end{tabular}}}}%
    \put(0.26135876,0.00234446){\makebox(0,0)[lt]{\lineheight{1.25}\smash{\begin{tabular}[t]{l}$\Sigma(B^4,A_0)$\end{tabular}}}}%
    \put(0.56528233,0.00234446){\makebox(0,0)[lt]{\lineheight{1.25}\smash{\begin{tabular}[t]{l}$T_0$\end{tabular}}}}%
    \put(0.82678836,0.00234446){\makebox(0,0)[lt]{\lineheight{1.25}\smash{\begin{tabular}[t]{l}$\Sigma(B^4,T_0)$\end{tabular}}}}%
    \put(0.2449578,0.14934847){\makebox(0,0)[lt]{\lineheight{1.25}\smash{\begin{tabular}[t]{l}$-4$\end{tabular}}}}%
    \put(0.75334022,0.14529755){\makebox(0,0)[lt]{\lineheight{1.25}\smash{\begin{tabular}[t]{l}$-4$\end{tabular}}}}%
    \put(0.85194814,0.1899914){\makebox(0,0)[lt]{\lineheight{1.25}\smash{\begin{tabular}[t]{l}$-4$\end{tabular}}}}%
  \end{picture}%
\endgroup%

\caption{The accessory surfaces $A_0$ and $T_0$, along with Stein handle diagrams for their double branched covers.}\label{fig:more-plumbing}
\end{figure}

\subsection{Exotic surfaces in the open 4-ball.} We now combine these ingredients  to produce exotic surfaces in $B^4$ whose interiors in the open 4-ball are distinct.

\begin{proof}[Proof of Theorem~\ref{thm:ball}]
We begin by introducing two last building blocks, the \emph{accessory annulus}  $A_0$ and \emph{accessory torus} $T_0$ shown in Figure~\ref{fig:more-plumbing}. It is straightforward to show that the double branched cover of $B^4$ along $A_0$ is a $D^2$-bundle over $S^2$ with Euler number $-4$. This  can be obtained by attaching a 2-handle to $B^4$ along a $(-4)$-framed unknot. It follows that $\Sigma(B^4,A_0)$ admits a Stein handle diagram as shown in Figure~\ref{fig:more-plumbing}. The accessory torus $T_0$ is obtained by plumbing two copies of $A_0$ together. The double branched cover $\Sigma(B^4,T_0)$ is then a plumbing of two $D^2$-bundles over $S^2$ each with Euler number $-4$; see  \cite[\S6.1]{GompfStipsicz4}. As illustrated using the Stein handle diagram on the right side of Figure~\ref{fig:more-plumbing}, this 4-manifold also admits a Stein structure. 

We have already established two key cases of the theorem, namely the cases of annuli and once-punctured tori, using Propositions~\ref{prop:spheres} and \ref{prop:no-spheres}. With these pieces in hand, the general case follows easily. Other than the disk, the topological type of any compact, connected, orientable surface with boundary can be realized by boundary-summing copies of annuli and once-punctured tori. In particular, surfaces  of positive genus can be realized by boundary-summing the tori $T$ or $T'$ with appropriate copies of the accessory surfaces $A_0$ and/or $T_0$, whereas planar surfaces are realized as boundary sums of the annuli $A$ or $A'$ with copies of $A_0$. In each case, let $F$ and $F'$ denote the resulting surfaces with the desired topological type.   Note that the topological isotopy (rel boundary) between the annuli $A$ and $A'$ or the tori $T$ and $T'$ induces a topological isotopy (rel boundary) between $F$ and $F'$.

As in Proposition~\ref{prop:spheres}, the double branched cover $\Sigma(B^4,F')$ will contain a smooth 2-sphere of self-intersection $-2$.  On the other hand, we claim that $W=\Sigma(B^4,F)$ cannot contain any such 2-sphere. We first treat the case where $F$ has positive genus. In this case, $F$ is a boundary sum of $T$ with some number of copies of $A_0$ and $T_0$. To be concrete, we write
$$F = T \, \natural_a \ \! A_0  \, \natural_b \ \! T_0$$
for some integers $a,b \geq 0$. It follows that the branched cover  $W=\Sigma(B^4,F)$ is obtained by boundary-summing (via 4-dimensional 1-handles) the corresponding branched covers of $B^4$ over these constituent surfaces:
\begin{equation}\label{eqn:decomp}
W = \Sigma(B^4,T) \, \natural_a \ \! \Sigma(B^4,A_0)   \, \natural_b \ \! \Sigma(B^4,T_0).
\end{equation}
The intersection form of $W$ decomposes  as a direct sum:
$$
\begin{bmatrix} -2 & -2 \\ -2 & -6 \end{bmatrix} \  \ \underset{\text{\normalsize \ \  $a$}}{\text{\Large$\oplus$}} \, \begin{bmatrix} -4 \end{bmatrix} \ \ \underset{\text{\normalsize  \ \  $b$}}{\text{\Large$\oplus$}}  \, \begin{bmatrix} -4 & -1 \\ -1 & -4\end{bmatrix}.
$$ 
As above, it is straightforward to see that $H_2(W)$ contains a unique (up to multiplication by $\pm1$) class $\sigma$ of self-intersection $-2$. This corresponds to the class $h_1$ described above in the  summand $\Sigma(B^4,T) \subset W$; this is represented by the $-2$-framed handle attached along $\mathcal{K}_1$ on the right side Figure~\ref{fig:DBCs}.  We claim that $\sigma$ cannot be represented by a smoothly embedded 2-sphere. By choosing to connect the summands of $W$ in \eqref{eqn:decomp} via Stein 1-handles, we obtain a Stein structure on $W$ whose first Chern class $c_1(W)$ decomposes a direct sum of the summands' first Chern classes. Thus we have
$$|\langle c_1(W),\sigma \rangle| = |\langle c_1(\Sigma(B^4,T)), \sigma \rangle| = 2 \neq 0.$$
By Theorem~\ref{thm:lisca-matic-2}, it follows that $\sigma$ cannot be represented by a smoothly embedded 2-sphere. Therefore $W$ contains no smooth 2-spheres of self-intersection $-2$.
 
This completes the argument for the case of surfaces of positive genus. In the simpler planar case, $F$ is obtained by boundary summing $A$ with copies of $A_0$. The double branched cover $W = \Sigma(B^4,F)$ then has the (diagonal) intersection form $\begin{bmatrix} -2 \end{bmatrix} \oplus \begin{bmatrix} -4 \end{bmatrix} \oplus \cdots \oplus \begin{bmatrix} -4 \end{bmatrix}$. It follows immediately that $H_2(W)$ contains a unique class (up to multiplication by $\pm1$) of self-intersection $-2$; this class is represented by the $(-2)$-framed 2-handle in $\Sigma(B^4,A) \subset W$. As above, $W$ admits a Stein structure whose first Chern class decomposes along the summands. The aforementioned 2-handle is attached along a Legendrian knot with rotation number $-2$, so
$$|\langle c_1(W), \sigma \rangle| = |\langle c_1(\Sigma(B^4,A)), \sigma \rangle | = 2 \neq 0.$$
As above, we conclude that $W$ contains no smooth 2-spheres  of self-intersection $-2$. 

This argument distinguishes the interiors of $W$ and $W'$, so the interiors of the surfaces $F$ and $F'$ remain exotically knotted in the open 4-ball, as desired. 
\end{proof}

\vspace{-.35in}

\section{Braided surfaces and complex curves}
\label{sec:braid}

In this section, we recast the surfaces in $B^4$ from \S\ref{sec:invertible} and \S\ref{sec:ball} as \emph{positively braided surfaces}, enabling us to realize them as compact pieces of algebraic curves.

\subsection{Braided surfaces.}\label{subsec:braid} To introduce positively braided surfaces, we first recall some preliminaries about the $n$-stranded braid group
$$B_n = \big \langle \, \sigma_1,\ldots,\sigma_{n-1} \mid \sigma_i \sigma_{i+1} \sigma_i = \sigma_{i+1} \sigma_i \sigma_{i+1}, \ \sigma_i \sigma_j = \sigma_j \sigma_i \text{ for } |i-j| \geq 2 \, \big \rangle.$$
Recall from the introduction that an element of $B_n$ is \emph{quasipositive} if it is a product of conjugates of the standard positive Artin generators $\sigma_i \in B_n$ \cite{rudolph:qp-alg}. Each quasipositive factorization of a quasipositive $n$-braid  determines a ribbon-immersed surface in $S^3$ formed from $n$ parallel disks by attaching a positively twisted band for each term $w \sigma_i w^{-1}$; see Figure~\ref{fig:braid-and-surface}.  
The corresponding embedded surface in $B^4$ is known as  a  \emph{positively braided surface}, and can naturally be viewed in the bidisk $D^2 \! \times \! D^2$:

\begin{defn}[cf \cite{rudolph:braided-surface}]\label{def:braided}
A smooth, properly embedded surface $F$ in the bidisk $D^2 \! \times \! D^2 = \{(z,w) \in \cc^2 : |z| \leq 1, \ |w| \leq 1\}$  is \emph{positively braided} if 
\begin{enumerate}
\item [(1)] $\partial F$ is a braid in the solid torus $\partial D^2 \! \times \! D^2$,
\item [(2)] the $z$-coordinate projection ${\operatorname{pr}_{z}}|_F: F \to D^2$ is a branched covering, and
\item [(3)] $F$ is oriented so that ${\operatorname{pr}_{z}}|_F$ is orientation-preserving at all regular points and  ${\operatorname{pr}_{w}}|_F$ is orientation-preserving at all branch points (with respect to the complex orientation on $D^2 \subset \cc$).
\end{enumerate}
\end{defn}

 In \cite{rudolph:qp-alg} (as interpreted in \cite[\S4]{rudolph:braided-surface}), Rudolph showed that such surfaces can be realized (up to isotopy) as compact pieces of algebraic curves:
  
\begin{thm}[Rudolph]\label{thm:rudolph}
If $F$ is a positively braided surface in the bidisk $D^2 \! \times \! D^2  \subset \cc^2$, then there is a smooth algebraic curve $V \subset \cc^2$ such that $V \cap D^2 \! \times \! D^2$ is a positively braided surface that is isotopic to $F$ (through positively braided surfaces).
\end{thm}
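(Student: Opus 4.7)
The plan is to construct the algebraic curve $V$ directly from the quasipositive factorization encoded by $F$. Because $F$ is positively braided, the projection $\pi = \operatorname{pr}_z|_F : F \to D^2$ is a simple branched cover with finitely many branch points $z_1, \ldots, z_m \in D^2$, and condition (3) in the definition forces each branch point to be locally ambient-isotopic to the zero locus of $w^2 - (z - z_i)$, oriented by the complex structure on the $w$-disk. Choosing a Hurwitz system of arcs from a basepoint on $\partial D^2$ to the $z_i$ encodes $F$, up to isotopy through positively braided surfaces, as a quasipositive factorization $\beta = \prod_{i=1}^m w_i \sigma_{k_i} w_i^{-1}$ of the boundary braid $\partial F$.

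I would then build a polynomial $p \in \cc[z,w]$ inductively, one band at a time. Start with $p_0(z,w) = \prod_{j=1}^n (w - a_j)$ for distinct basepoints $a_j \in D^2$, whose vanishing locus is the trivial $n$-stranded braided surface. At the $i$-th step, set $p_i = p_{i-1} + \epsilon_i q_i$, where $q_i \in \cc[z,w]$ is tailored so that, for all sufficiently small $\epsilon_i > 0$, the zero set $\{p_i = 0\}$ acquires exactly one new quadratic (Morse) critical point of $\operatorname{pr}_z$ at $z_i$, with the two coalescing sheets being those specified by transporting $\sigma_{k_i}$ along the chosen Hurwitz arc via the word $w_i$. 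Near $z_i$ one arranges $p_i$ to have the local form $(w - \alpha(z))(w - \beta(z)) - \epsilon_i(z - z_i)\cdot(\text{unit})$ after factoring a nonvanishing piece of $p_{i-1}$, which ensures both that the vanishing cycle is correct and that the orientation of the new branch point matches condition (3).

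To complete the argument, I would verify that $p := p_m$ has the desired properties. The straight-line family $p_{i-1} + t \epsilon_i q_i$ for $t \in [0,1]$ is a smooth path of positively braided surfaces connecting successive stages, and concatenating these paths gives an isotopy through positively braided surfaces from $V \cap D^2 \times D^2$ to $F$. Smoothness of $V = \{p = 0\}$ on all of $\cc^2$ can then be ensured by a final generic small perturbation supported outside the bidisk (for instance, by perturbing the top-degree terms), since any singularity of $V$ inside the bidisk would correspond to a non-Morse behavior of $\pi$ that we have explicitly ruled out, and any singularities outside can be smoothed without affecting the piece in $D^2 \times D^2$.

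The main obstacle is the inductive construction of the $q_i$ and $\epsilon_i$: one must ensure that each perturbation neither disturbs the previously created branch points at $z_1, \ldots, z_{i-1}$, nor introduces extraneous Morse critical points of $\operatorname{pr}_z$ elsewhere in $D^2$, nor changes the boundary braid. This requires a quantitative persistence argument for Morse critical points under small perturbations, together with a careful verification that the vanishing cycle of the new critical point really is $w_i \sigma_{k_i} w_i^{-1}$ rather than some other conjugate of $\sigma_{k_i}$; once these points are handled, assembling the local Morse models into a smooth algebraic curve is standard.
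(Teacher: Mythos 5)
The paper does not actually prove this theorem; it cites it directly from Rudolph \cite{rudolph:qp-alg} (as interpreted in \cite[\S4]{rudolph:braided-surface}), so there is no paper proof to compare against. That said, your proposal has a concrete flaw that is worth pointing out.

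The inductive step cannot work as stated. You want $p_i = p_{i-1} + \epsilon_i q_i$ to ``acquire exactly one new quadratic critical point of $\operatorname{pr}_z$ at $z_i$'' for all sufficiently small $\epsilon_i>0$. But branch points of $\operatorname{pr}_z|_{\{p_i=0\}}$ are zeros of the discriminant $\delta_i(z)$ of $p_i(z,\cdot)$, and by the inductive hypothesis $\delta_{i-1}(z_i)\ne 0$. Since $\delta_i(z_i)=\delta_{i-1}(z_i)+O(\epsilon_i)$, for small $\epsilon_i$ the discriminant remains nonzero at $z_i$ and \emph{no} new branch point appears there. Your local model $(w-\alpha(z))(w-\beta(z))-\epsilon_i(z-z_i)\cdot(\text{unit})$ only has a branch point at $z_i$ if $\alpha(z_i)=\beta(z_i)$, which contradicts the inductive hypothesis (the initial polynomial $p_0=\prod_j(w-a_j)$ has distinct roots everywhere, so this never gets started). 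In short: you cannot create a branch point from nothing by a small perturbation, only move existing ones around. A large perturbation would be needed, but then the straight-line family $p_{i-1}+t\epsilon_i q_i$ would not stay positively braided, and in particular the boundary braid would change during the homotopy. You acknowledge these issues as ``the main obstacle,'' but they are not technical loose ends to be tightened later --- they invalidate the scheme.

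Rudolph's argument takes a genuinely different route that avoids this problem. One views the braided surface through the map $\phi\colon D^2\to E_n\cong\cc^n$ into the space of monic degree-$n$ polynomials in $w$, where the branch points of $\operatorname{pr}_z$ correspond to intersections of $\phi(D^2)$ with the discriminant hypersurface $\Delta\subset E_n$, and condition~(3) in the definition of positively braided surface is exactly the requirement that those intersections be complex-positive. Quasipositivity of the boundary braid is what lets one build a smooth $\phi$ of this form realizing any prescribed quasipositive factorization. One then algebraicizes $\phi$ \emph{all at once} --- not band by band --- via interpolation and holomorphic approximation (choosing a polynomial map $\cc\to E_n$ hitting $\Delta$ transversally at the prescribed points with the prescribed local models, then perturbing generically so no extra intersections appear inside $D^2$). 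Complex transversality is generic and automatically positive, so there is no orientation bookkeeping to do at the end. The structural difference is that Rudolph constructs the entire algebraic curve in one step from global interpolation data, rather than trying to grow it one branch point at a time --- precisely because polynomials are too rigid for the latter.
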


For later use, we record a fact about positively braided surfaces bounded by unlinks.

\begin{lem}\label{lem:unlink}
Any positively braided surface bounded by an unlink is unique up to isotopy (rel boundary) through smooth surfaces.
\end{lem}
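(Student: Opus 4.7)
Let $F \subset D^2 \! \times \! D^2$ be a positively braided surface associated to a quasipositive factorization $\beta = \prod_{i=1}^{k} w_i \sigma_{j_i} w_i^{-1}$ on $n$ strands, with closure $\hat\beta$ an $m$-component unlink. The plan is to show in two stages that $F$ is unique up to smooth isotopy rel boundary: first that $F$ must be a disjoint union of $m$ disks, and then that any such $F$ reduces to a standard collection via simplifications of the underlying factorization.

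\emph{Stage 1: $F$ is a disjoint union of disks.} The band presentation from Figure~\ref{fig:braid-and-surface} gives $\chi(F) = n - k$: one starts with $n$ horizontal disks and attaches $k$ positive bands, each decreasing $\chi$ by one. By Rudolph's slice-Bennequin inequality, sharpened via Kronheimer-Mrowka's local Thom conjecture, $n - k$ realizes the maximum Euler characteristic among oriented, smoothly embedded surfaces in $B^4$ (without closed components) bounded by $\hat\beta$. For an $m$-component unlink, this maximum equals $m$, achieved by $m$ disjoint standard disks. Hence $n - k = m$, and since $\partial F$ already has $m$ components, $F$ must be a disjoint union of $m$ smoothly embedded disks in $B^4$.

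\emph{Stage 2: Reduction to a standard form.} The equality $n - k = m$ forces the bands to form a forest of $m$ trees on the $n$ strands, with each tree corresponding to a disk component of $F$. Restricting to a single component, I would iteratively identify a ``leaf'' of the tree (a strand incident to exactly one band), use Hurwitz moves on the conjugate factors to bring the corresponding term to the end of the factorization, and perform a Markov destabilization --- each of which corresponds to a smooth isotopy of the surface inside $B^4$. Iterating across all components reduces $F$ to the collection of $m$ parallel horizontal disks in $D^2 \! \times \! D^2$. Since any positively braided surface bounded by the given unlink reduces to this common standard form, any two are smoothly isotopic rel boundary.

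The main obstacle is Stage 2: rigorously verifying that every quasipositive factorization on $n$ strands whose closure is an unknot and which has exactly $n-1$ bands does admit the sequence of Hurwitz moves and Markov destabilizations one wants. This is a combinatorial claim about quasipositive factorizations of trivial closures, and while it is intuitively supported by the tree structure of the bands (one can always locate a ``leaf'' band whose conjugating word can be pushed to the end of the factorization), making this precise in a way that preserves the positively braided structure at each intermediate stage is the delicate part of the argument; an alternative would be to invoke a direct uniqueness statement for smooth slice disks of the unknot, but such a statement is notoriously subtle in the smooth category, so the destabilization argument leveraging the rigidity of the positively braided structure seems the more robust route.
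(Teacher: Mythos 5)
Stage 1 of your argument is correct and is a nice use of the slice-Bennequin inequality that does not appear in the paper: the positively braided surface $F$ built from $n$ disks and $k$ bands has $\chi(F)=n-k$, Rudolph's slice-Bennequin (via the local Thom conjecture) forces $n-k = m = \chi_s(\hat\beta)$, and a boundary-component count then forces $F$ to be $m$ disjoint disks. However, Stage 2 is a genuine gap, and you correctly flag it as the weak point without resolving it. The claim that every quasipositive factorization of an $n$-braid with unlink closure and exactly $n-m$ bands can be reduced to the trivial one by Hurwitz moves and positive Markov destabilizations is \emph{not} a known theorem; it is a hard combinatorial problem (related to Orevkov's questions about Hurwitz equivalence of quasipositive factorizations), and the existence of a leaf band in the tree structure does not by itself make the band ``free'': the conjugating words $w_i$ of the \emph{other} bands may still run over the leaf strand, and controlling this after Hurwitz moves is exactly the issue. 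Being isotopic as braided surfaces does not imply Hurwitz-equivalence of factorizations, so one cannot bootstrap. Your proposed fallback --- invoking uniqueness of smooth slice disks for the unknot --- is also not available: smooth uniqueness of disks bounded by the unknot in $B^4$ is not a known theorem, and indeed the entire point of this paper is that such uniqueness \emph{fails} for other knots.

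The paper sidesteps all of this by leaving the braid-combinatorial category entirely. It applies Rudolph's realization theorem (Theorem~\ref{thm:rudolph}) to replace $F$ by an isotopic compact piece $F_0$ of an algebraic curve (an isotopy that need not fix $\partial F$), then invokes Eliashberg's filling-by-holomorphic-curves argument to conclude that a holomorphic curve in $B^4$ bounded by an unlink is isotopic \emph{rel boundary} to a collection of Seifert disks $F_1 \subset \partial B^4$, and finally pulls the rel-boundary isotopy between $F_0$ and $F_1$ back through the initial ambient isotopy carrying $F$ to $F_0$. The rigidity input is holomorphic, not combinatorial. If you want to salvage your Stage~2, you would need either a proof of the Hurwitz/destabilization reduction (which would be a substantial new result) or some other geometric mechanism to replace the Eliashberg filling step.
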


\begin{proof}
Let $F$ be a positively braided surface in $D^2 \! \times \! D^2$ whose boundary is an unlink; as we only care about the smooth isotopy type of $F$ rel boundary, we may smooth the bidisk's corners and view $F$ as a properly embedded surface in $B^4$.

By the  result of Rudolph cited above, $F$ is smoothly isotopic to a compact piece of an algebraic curve $F_0$ in $B^4\subset \cc^2$. (Note that this isotopy does \emph{not} necessarily fix $\partial F$.) By Eliashberg's technique of filling by holomorphic curves (especially \cite[Corollary~2.2B]{yasha:lagr-cyl}), any  smooth, holomorphic curve in $B^4 \subset \cc^2$ bounded by an unlink is smoothly isotopic to a Seifert surface formed from a collection of disjointly embedded disks in $\partial B^4$; see \cite[Proposition~2]{boileau-fourrier} or \cite[\S3.2, p418]{hayden:cross} for  additional details. In particular, there is a collection of embedded disks $F_1 \subset \partial B^4$ with $\partial F_1 = \partial F_0$ such that $F_0$ and $F_1$ are smoothly isotopic (rel boundary).

Now use the isotopy of $B^4$ that carried $F$ to $F_0$ to pull back the isotopy between $F_0$ and $F_1$ (fixing $\partial F_0=\partial F_1$). This furnishes an isotopy from $F$ to a collection of embedded disks in $\partial B^4$ (fixing $\partial F$). We conclude by appealing to the classical fact that any smooth, disjointly embedded collection of disks in $S^3$ is unique up to smooth isotopy (rel boundary).
\end{proof}

\subsection{Revisiting the building blocks.} \label{subsec:revisit} The first goal of this subsection is to recast the disks $D$ and $D'$ as braided surfaces.

\begin{thm}\label{thm:braid}
The disks $D$ and $D'$ are smoothly isotopic to the positively braided surfaces defined by the quasipositive braidwords
\begin{align*}
\beta&=(\sigma_{2}\sigma_{3}\sigma_{2}^{-1})(\sigma_1^{-2} \sigma_2 \sigma_3 \sigma_4^2 \sigma_3^{-1} \sigma_2 \sigma_3 \sigma_4^{-2}  \sigma_3^{-1} \sigma_2^{-1} \sigma_1^2)(\sigma_3^{-1} \sigma_2 \sigma_1 \sigma_2^{-1} \sigma_3)(\sigma_4^{-1} \sigma_3 \sigma_4)\\
\beta'&=\sigma_{2}(w \sigma_{2}^{-1}\sigma_{1}\sigma_{2}w^{-1})(w\sigma_{2}^{-1}\sigma_{3}\sigma_{1}\sigma_{2}\sigma_{1}^{-1}\sigma_{3}^{-1}\sigma_{2}w^{-1}) (w\sigma_{3}^2\sigma_{4}\sigma_{3}^{-2}w^{-1}),
\end{align*}
where $w = \sigma_3 \sigma_4^{-1} \sigma_1^{-1} \sigma_3^{-2} \sigma_2^{-1}   \sigma_1^{-1} \sigma_3^{-1}$. Therefore each of $D$ and $D'$ is smoothly isotopic to the intersection of $B^4$ with a smooth algebraic curve in $\cc^2$. 
\end{thm}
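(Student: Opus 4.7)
The plan is to construct the positively braided surfaces $\Sigma_\beta, \Sigma_{\beta'} \subset D^2 \!\times\! D^2$ associated to the two factorizations, identify them smoothly (rel boundary) with the disks $D$ and $D'$ of Figure~\ref{fig:positron}, and then invoke Rudolph's Theorem~\ref{thm:rudolph} to realize each as a compact piece of a smooth algebraic curve in $\cc^2$.

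To identify $\Sigma_\beta$ with $D$, I would first draw $\Sigma_\beta$ by stacking on five parallel disks the four positive bands prescribed by the conjugate factors of $\beta$; note that $\chi(\Sigma_\beta) = 5 - 4 = 1$, confirming $\Sigma_\beta$ is a disk. Next, I would verify that the closure of the $5$-braid $\beta$ is isotopic to the slice knot $K = \partial D$ by a sequence of Markov and braid-relation moves. Finally, I would match the banded structure of $\Sigma_\beta$ with that of $D$. The most direct route is to extract a movie of $D$ from the handle diagram in Figure~\ref{fig:positron}: each saddle in the movie is a positively twisted band attached against a suitable braid axis, and one then checks that the resulting sequence of conjugates $w_i \sigma_{j_i} w_i^{-1}$ agrees (up to isotopies of the braided surface) with the four factors of $\beta$. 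An equivalent, arguably cleaner alternative is to convert both $\Sigma_\beta$ and $D$ into Kirby diagrams for the slice-disk exterior, using the techniques illustrated in Figure~\ref{fig:pi1-D}, and match the resulting handle presentations by explicit slides.

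To identify $\Sigma_{\beta'}$ with $D'$, I would exploit the $\tau$-symmetry from \S\ref{sec:invertible}: since $D' = \tau(D)$ for the involution $\tau$ of $(S^3,K)$ in Figure~\ref{fig:invertible}, it suffices to show that $\Sigma_{\beta'}$ is the image of $\Sigma_\beta$ under a braid-level realization of $\tau$. Concretely, $\tau$ is implemented by a change of braid representative whose effect on each band is conjugation by the fixed word $w = \sigma_3\sigma_4^{-1}\sigma_1^{-1}\sigma_3^{-2}\sigma_2^{-1}\sigma_1^{-1}\sigma_3^{-1}$, with the leading $\sigma_2$ of $\beta'$ absorbing the residual action of $\tau$ on strands near the braid axis. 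Naturality of the braided-surface construction under braid conjugation then gives $\Sigma_{\beta'} = \tau(\Sigma_\beta) = D'$. Once both identifications are in hand, Theorem~\ref{thm:rudolph} immediately produces smooth algebraic curves $V, V' \subset \cc^2$ whose intersections with $B^4$ are isotopic to $D$ and $D'$ respectively. I expect the main obstacle to be the diagrammatic identification $\Sigma_\beta = D$: the middle factor of $\beta$ is a long conjugate of $\sigma_3$, and translating its band geometry into the simple picture of $D$ requires a careful, if finite, sequence of braid isotopies, band slides, and handle cancellations; the symmetry argument for $\Sigma_{\beta'}$ and the appeal to Rudolph's theorem are by comparison routine.
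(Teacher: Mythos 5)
Your skeleton is right (build the braided surfaces, identify them with $D$ and $D'$, invoke Rudolph), but the engine of your proof is different from the paper's and one of your two identifications rests on a gap.

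\smallskip

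\textbf{What the paper does instead.} The key ingredient you miss is Lemma~\ref{lem:unlink}: a positively braided surface bounded by an unlink is unique up to isotopy rel boundary (this is nontrivial; its proof runs through Rudolph's Theorem~\ref{thm:rudolph} and Eliashberg's filling by holomorphic curves). With that in hand, the paper never needs a movie-by-movie or Kirby-diagram comparison of $\Sigma_\beta$ with $D$. It proves Proposition~\ref{prop:braid}, which asserts not just that $\hat\beta \simeq K$ but that the isotopy \emph{carries the distinguished bands} $b,b',c$ to their counterparts. Then the argument is structural: delete the distinguished band $b$ (the central $\sigma_2$ of the second factor of $\beta$) and observe that the remaining positively braided subsurface is bounded by a $2$-component unlink; by Lemma~\ref{lem:unlink} that subsurface is isotopic rel boundary to the standard pair of Seifert disks, so re-attaching $b$ recovers exactly the disk determined by the band $b$, which is $D$. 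This is what makes the proof clean; your proposed ``extract a movie of $D$ and match saddles'' or ``match Kirby diagrams by explicit slides'' would be a much longer and delicate calculation, and there is no indication you could drive it to completion.

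\smallskip

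\textbf{The gap in the $D'$ case.} Your argument for $\Sigma_{\beta'}\simeq D'$ --- that ``$\tau$ is implemented by a change of braid representative whose effect on each band is conjugation by $w$, with the leading $\sigma_2$ absorbing the residual action'' --- is a heuristic that does not hold as stated. Conjugating each factor of $\beta$ by $w$ produces $w\beta w^{-1}$, which is not the same \emph{factorization} as $\beta'$; and the involution $\tau$ is a diffeomorphism of $(S^3,K)$ (extended to $B^4$) that need not preserve the braid axis, the bidisk, or braidedness, so $\tau(\Sigma_\beta)$ is not obviously a braided surface, let alone the one given by the word $\beta'$. The paper instead handles $D'$ symmetrically to $D$: $D'$ is the disk obtained by attaching $b'$ (the leading $\sigma_2$ of $\beta'$) to the standard Seifert disks for the braided $2$-component unlink of display~\eqref{eqn:braid}; the appendix Lemma~\ref{lem:braid} verifies by an explicit braid-word computation that this unlink braid admits a quasipositive factorization, whence Lemma~\ref{lem:unlink} identifies its positively braided surface with the standard Seifert disks, so $\Sigma_{\beta'}\simeq D'$. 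Without Lemma~\ref{lem:unlink} and without an explicit replacement for the braid-word identity of Lemma~\ref{lem:braid}, your $D'$ step does not close.
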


\begin{figure}\center
\smallskip
\def\svgwidth{\linewidth}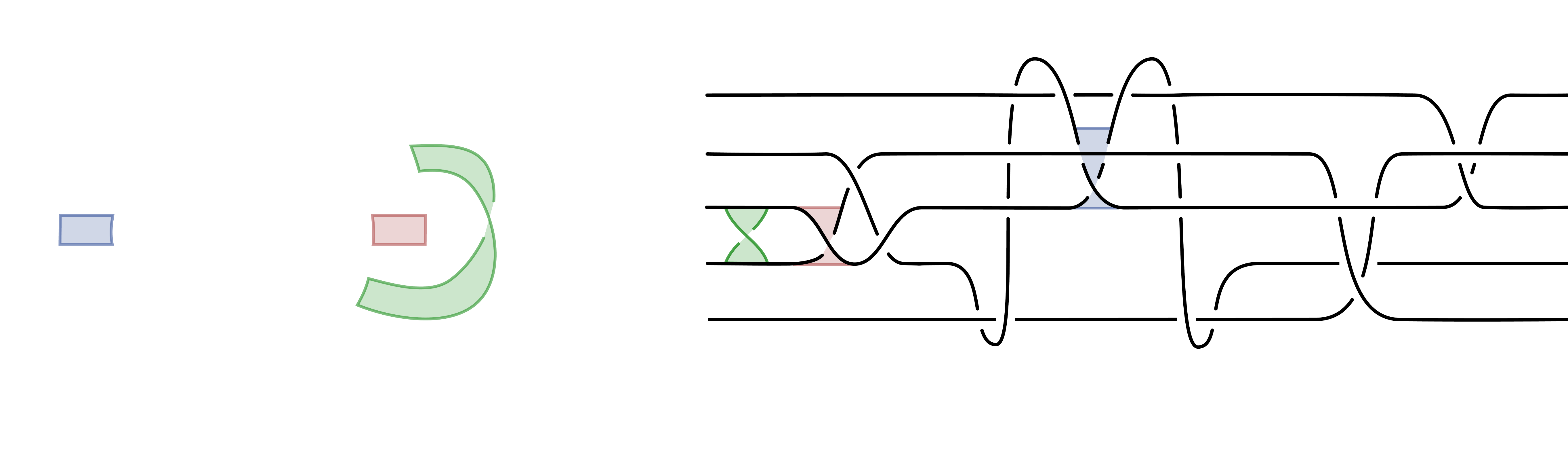
\caption{The knot $K$ is isotopic to the closure of the braid shown in part (b).}\label{fig:braid-to-sym}
\end{figure}

\begin{figure}[b]\center
\includegraphics[width=.35\linewidth]{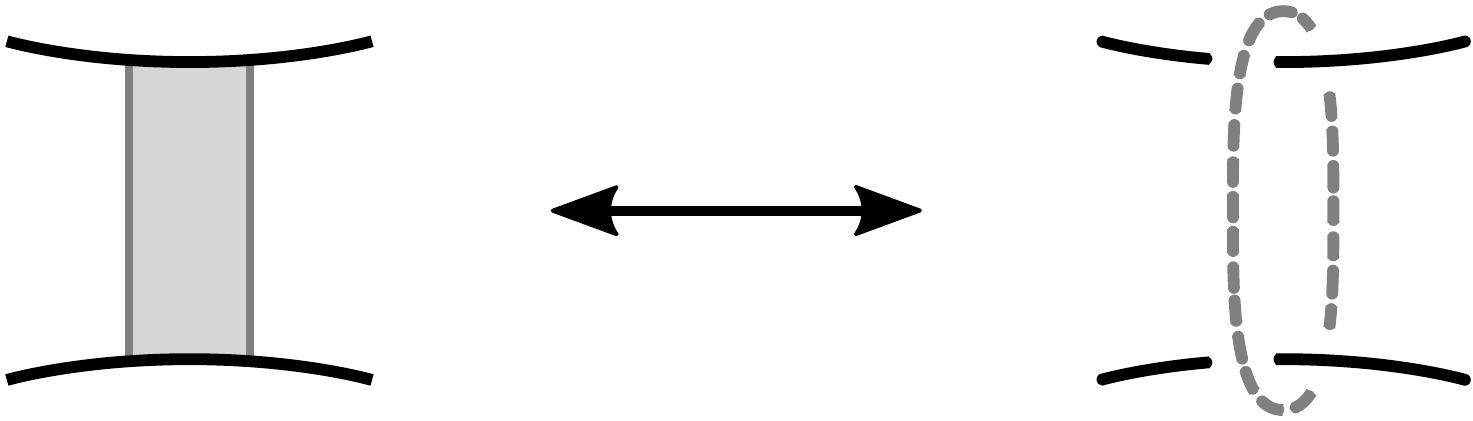}
\caption{Replacing a band with a loop and vice versa.}\label{fig:band-to-loop}
\end{figure}

 We begin by expressing $K$ as the closure of a braid.

 \begin{prop}\label{prop:braid}
The knot $K$ is isotopic to the closure of the braid  
$$\beta=(\sigma_{2}\sigma_{3}\sigma_{2}^{-1})(\sigma_1^{-2} \sigma_2 \sigma_3 \sigma_4^2 \sigma_3^{-1} \sigma_2 \sigma_3 \sigma_4^{-2}  \sigma_3^{-1} \sigma_2^{-1} \sigma_1^2)(\sigma_3^{-1} \sigma_2 \sigma_1 \sigma_2^{-1} \sigma_3)(\sigma_4^{-1} \sigma_3 \sigma_4),$$
as shown in Figure~\ref{fig:braid-to-sym}. Moreover, the isotopy respects the bands $b$, $b'$, and $c$ shown.
\end{prop}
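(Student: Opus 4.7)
The plan is a purely diagrammatic verification: starting from the symmetric picture of $K$ in Figure~\ref{fig:invertible}, I would perform an explicit sequence of isotopies that places the knot in closed 5-braid position around a fixed vertical axis, while carrying the three marked bands along.

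The first step is to choose the braid axis. The symmetric diagram of $K$ suggests a natural vertical axis through the ``waist'' of the picture; after an ambient planar isotopy I would arrange the knot so that it intersects every half-plane through this axis in exactly five points, with all crossings occurring between neighboring strands. The 5-strand count matches the arithmetic of $\beta$, which is a 5-braid whose quasipositive factorization has four conjugate factors (one per disk-band in the ribbon disk $D$, built from $5-4=1$ disk of Euler characteristic $1$).

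The second step is to convert the ``horizontal'' portions of $K$ near each band into configurations that braid cleanly around the axis. Here the essential tool is the local move of Figure~\ref{fig:band-to-loop}, which trades a $+1$-twisted ribbon-band for a loop of knot that wraps around the intervening strands and encloses a single positive crossing. Applied band-by-band this move replaces each of $b$, $b'$, and (the clasp-band) $c$ by a conjugate $w_i \sigma_{j_i} w_i^{-1}$: the loop traces out the conjugating word $w_i$, while the single positive crossing produces $\sigma_{j_i}$. Performing these four local replacements (together with planar and Reidemeister isotopies to bring the resulting picture into standard braid form) yields Figure~\ref{fig:braid-to-sym}(b), and reading the diagram from bottom to top gives precisely the factorization
\[
\beta=(\sigma_{2}\sigma_{3}\sigma_{2}^{-1})\,(\sigma_1^{-2} \sigma_2 \sigma_3 \sigma_4^2 \sigma_3^{-1} \sigma_2 \sigma_3 \sigma_4^{-2}  \sigma_3^{-1} \sigma_2^{-1} \sigma_1^2)\,(\sigma_3^{-1} \sigma_2 \sigma_1 \sigma_2^{-1} \sigma_3)\,(\sigma_4^{-1} \sigma_3 \sigma_4).
\]
The middle factor is the conjugate $(\sigma_1^{-2}\sigma_2 \sigma_3 \sigma_4^2 \sigma_3^{-1})\,\sigma_2\,(\sigma_1^{-2}\sigma_2 \sigma_3 \sigma_4^2 \sigma_3^{-1})^{-1}$, corresponding to one of the bands whose loop wraps across the full width of the braid; this is the source of the long word.

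The main obstacle is bookkeeping rather than insight: the entire argument is a labeled isotopy, and I must ensure that each Reidemeister move and each band-to-loop replacement does not permute or obscure the three marked bands. To handle this I would track a small tubular neighborhood of each of $b$, $b'$, $c$ through every frame of the isotopy, checking after each move that the band emerges in the position shown in Figure~\ref{fig:braid-to-sym}(b). Since the moves involved (planar isotopy, Reidemeister II/III, and the local band-to-loop replacement of Figure~\ref{fig:band-to-loop}) are all supported away from the other bands, this tracking is routine but must be drawn carefully. Once established, the correspondence between bands and conjugate factors of $\beta$ is exactly what is needed to feed the proposition into the proof of Theorem~\ref{thm:braid}.
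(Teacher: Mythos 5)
Your overall strategy --- verify the isotopy diagrammatically while carrying the marked bands along --- is the paper's, just run in the opposite direction: the paper starts from the closure of $\beta$ (with the bands tracked as encircling loops via Figure~\ref{fig:band-to-loop}) and simplifies across roughly two dozen frames in Figure~\ref{fig:big-iso-1} to the symmetric diagram of $K$. The direction, and your preference for tracking tubular neighborhoods rather than loops, are both cosmetic.

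Where the proposal goes wrong is the paragraph built around Figure~\ref{fig:band-to-loop}. That figure does not ``trade a ribbon-band for a loop of knot enclosing a single positive crossing,'' and it does not produce conjugate factors $w_i\sigma_{j_i}w_i^{-1}$. It is a bookkeeping device: one records a band by a thin unknotted circle encircling it, so that the isotopy can be drawn entirely on a 1-complex, and the band is recovered at the end provided no strand of the diagram ever passes through the loop. Relatedly, the claim that the move ``replaces each of $b$, $b'$, $c$ by a conjugate $w_i \sigma_{j_i} w_i^{-1}$'' conflates two different kinds of bands. The braid $\beta$ has \emph{four} conjugate factors, corresponding to the four internal bands of $D$'s braided surface, and only one of them is among the tracked bands: $b$ is the core $\sigma_2$ of the second factor. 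The band $b'$ is internal to $D'$'s braided surface and appears as a factor of $\beta'$, not of $\beta$, while $c$ is not a saddle of either slice disk at all --- it is the extra band that turns $D$ or $D'$ into the annulus $A$ or $A'$, encoded by prepending $\sigma_2$ to $\beta$ or $\beta'$ rather than by any factor inside them. This confusion would not by itself sink the strategy, since the proposition only requires the isotopy to carry $b$, $b'$, $c$ along as attached bands and your tubular-neighborhood tracking suffices for that, but as written the proposal misdescribes what the figure is for and misstates the relationship between the three tracked bands and the quasipositive factorization; both would need to be repaired before the band-by-band bookkeeping could actually be carried out.
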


\begin{proof}
This is verified in parts (a)-(z) of Figure~\ref{fig:big-iso-1}, where we begin by drawing the closure of the braid $\beta$ and work towards the desired diagram of $K$. Note that, rather than drawing the bands themselves, we find it easier to follow a collection of tight loops from which the bands can be recovered as in Figure~\ref{fig:band-to-loop}. (The loop uniquely determines a band provided that no other strands pass through the loop during the isotopy.)
\end{proof}

\begin{proof}[Proof of Theorem~\ref{thm:braid}]
We first recall that a slice disk in $B^4$ with two minima and one saddle can be determined (up to isotopy) by a choice of a band move that turns the boundary knot into a two-component unlink. In particular, the disks $D$ and $D'$ bounded by the knot $K$ are determined by the bands $b$ and $b'$ shown in Figure~\ref{fig:braid-to-sym}(a).

\begin{figure}

\hspace{-.05\linewidth} \def\svgwidth{1.1\linewidth}\input{big_iso_1_temp.pdf_tex}
   \renewcommand\figurename{Figure}
   \renewcommand{\thefigure}{\arabic{figure}}
\caption{An isotopy relating the two diagrams of $K$ from Figure~\ref{fig:braid-to-sym}.}
   \label{fig:big-iso-1}
\end{figure}

\begin{figure}\center
\def\svgwidth{\linewidth}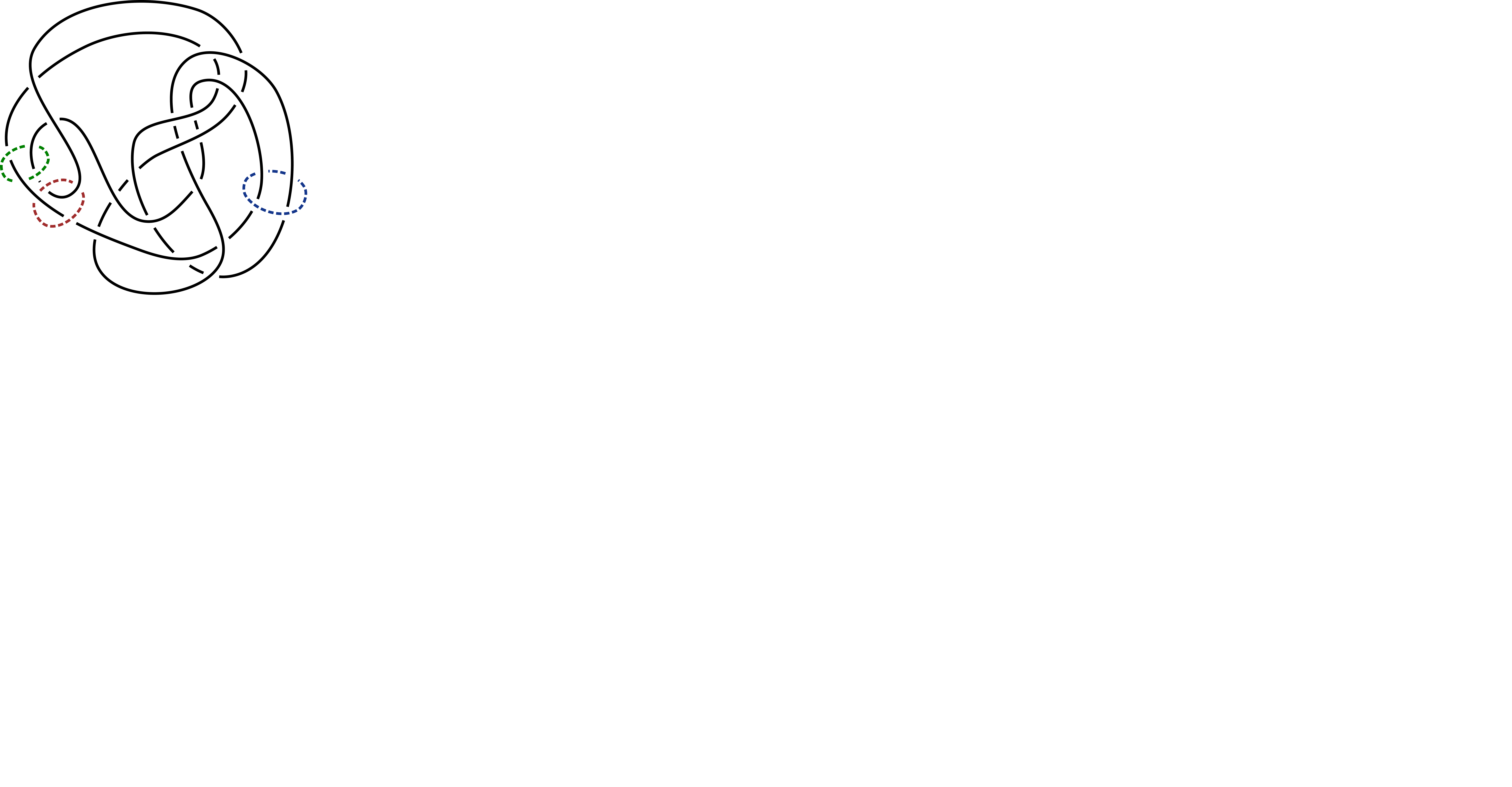
   \renewcommand\figurename{Figure}
   \renewcommand{\thefigure}{\arabic{figure} - Continued}
   \addtocounter{figure}{-1}
\caption{An isotopy relating the two diagrams of $K$ from Figure~\ref{fig:braid-to-sym}.}\label{fig:big-iso-2}
\end{figure}

The proof of Proposition~\ref{prop:braid} shows that the bands $b$ and $b'$ on $K$ are taken to the corresponding bands shown in Figure~\ref{fig:braid-to-sym}(b). It remains to show that the slice disk determined by the band $b$  (resp.~$b'$) in Figure~\ref{fig:braid-to-sym}(b) is smoothly isotopic rel boundary to the slice disk determined by  the braidword $\beta$ (resp.~$\beta'$). Observe that resolving the central crossing in the band $b$  (i.e., deleting the corresponding $\sigma_2 \in \beta$) yields a two-component unlink. The surface determined by $b$ is obtained from the standard Seifert surface for the two-component unlink by reattaching the band $b$. The surface determined by  $\beta$ also contains the band $b$. If we remove this band, the resulting subsurface  is a positively braided surface bounded by a two-component unlink. By Lemma~\ref{lem:unlink}, this subsurface is isotopic rel boundary to the standard Seifert surface for the two-component unlink. It follows that $\beta$ determines the same slice disk as $b$.

On the other hand, it is not as clear that $D'$ (which is determined by $b'$)  can be realized as a positively braided surface. We can view $D'$ as being obtained by attaching the band $b'$ (corresponding to the first occurrence of the generator $\sigma_2$ in $\beta'$) to the standard pair of Seifert disks bounded by the braided two-component unlink
\begin{equation}\label{eqn:braid}
\sigma_{3}\sigma_{2}^{-1}(\sigma_{4}^{-1}\sigma_{3}^{-1}\sigma_{1}^{-2}\sigma_{2}\sigma_{3}\sigma_{2}^{-1}\sigma_{1}^2\sigma_{3}\sigma_{4})(\sigma_{1}^{-1}\sigma_{2}\sigma_{3}\sigma_{2}^{-1}\sigma_{1})(\sigma_{3}\sigma_{4}\sigma_{3}^{-1}) \in B_5.
\end{equation}
The proof of Lemma~\ref{lem:braid} in the appendix shows that the non-quasipositive braidword in \eqref{eqn:braid} and the quasipositive braidword
$$(w \sigma_{2}^{-1}\sigma_{1}\sigma_{2}w^{-1})(w\sigma_{2}^{-1}\sigma_{3}\sigma_{1}\sigma_{2}\sigma_{1}^{-1}\sigma_{3}^{-1}\sigma_{2}w^{-1}) (w\sigma_{3}^2\sigma_{4}\sigma_{3}^{-2}w^{-1}) \subset \beta'$$
are equivalent in the braid group $B_5$. Therefore the resulting braided unlink bounds a positively braided surface in the bidisk. By Lemma~\ref{lem:unlink}, any such positively braided surface will be isotopic (rel boundary) to a standard pair of Seifert disks, allowing us to realize $D'$ itself as a positively braided surface.
\end{proof}

\begin{proof}[Proof of Theorem~\ref{thm:factor}] The preceding proof shows that there is a braid isotopy from $\beta$ to $\beta'$, and this extends to a topological isotopy between the associated braided surfaces using Proposition~\ref{prop:top-iso}.  For the sake of contradiction, suppose that the braid isotopy from $\beta$ to $\beta'$ extends to a smooth isotopy between the associated braided surfaces. Attaching the band $c$ (i.e., adding $\sigma_2$ to the beginning of each braid), we obtain an isotopy between the annuli $A$ and $A'$. However, the pairs $(B^4,A)$ and $(B^4,A')$ are not diffeomorphic (by Propositions~\ref{prop:spheres} and \ref{prop:no-spheres}), yielding the desired contradiction.
\end{proof}

\begin{figure}\center
\def\svgwidth{.85\linewidth}
\begingroup%
  \makeatletter%
  \providecommand\color[2][]{%
    \errmessage{(Inkscape) Color is used for the text in Inkscape, but the package 'color.sty' is not loaded}%
    \renewcommand\color[2][]{}%
  }%
  \providecommand\transparent[1]{%
    \errmessage{(Inkscape) Transparency is used (non-zero) for the text in Inkscape, but the package 'transparent.sty' is not loaded}%
    \renewcommand\transparent[1]{}%
  }%
  \providecommand\rotatebox[2]{#2}%
  \newcommand*\fsize{\dimexpr\f@size pt\relax}%
  \newcommand*\lineheight[1]{\fontsize{\fsize}{#1\fsize}\selectfont}%
  \ifx\svgwidth\undefined%
    \setlength{\unitlength}{2039.18637734bp}%
    \ifx\svgscale\undefined%
      \relax%
    \else%
      \setlength{\unitlength}{\unitlength * \real{\svgscale}}%
    \fi%
  \else%
    \setlength{\unitlength}{\svgwidth}%
  \fi%
  \global\let\svgwidth\undefined%
  \global\let\svgscale\undefined%
  \makeatother%
  \begin{picture}(1,0.23830243)%
    \lineheight{1}%
    \setlength\tabcolsep{0pt}%
    \put(0,0){\includegraphics[width=\unitlength,page=1]{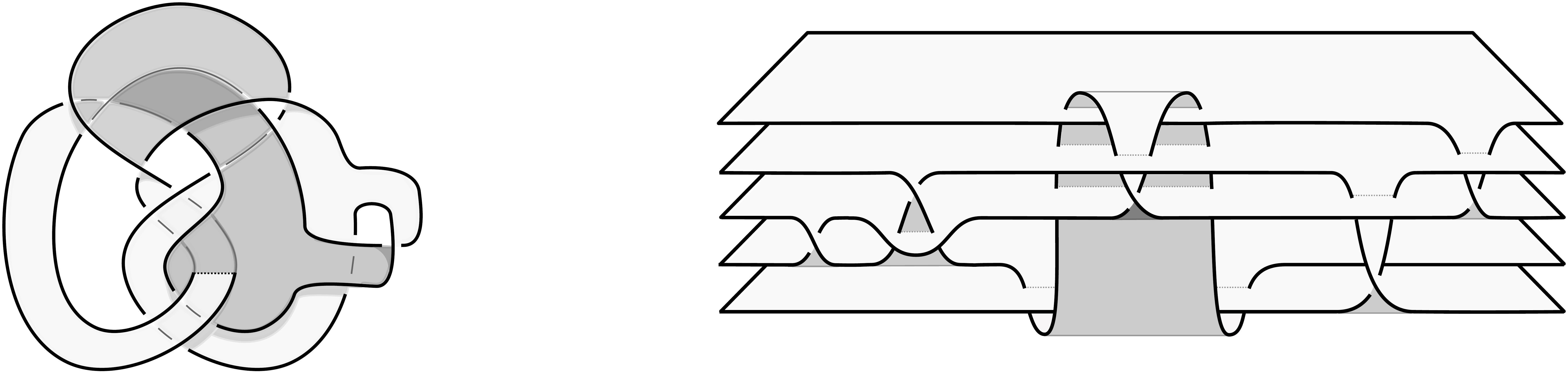}}%
    \put(0.342159,0.11470614){\makebox(0,0)[lt]{\lineheight{1.25}\smash{\begin{tabular}[t]{l}$=$\end{tabular}}}}%
  \end{picture}%
\endgroup%

\caption{The annulus $A$ as a positively braided surface.}\label{fig:A-and-T}
\end{figure}

\begin{figure}[b]\center
\def\svgwidth{.8\linewidth}
\begingroup%
  \makeatletter%
  \providecommand\color[2][]{%
    \errmessage{(Inkscape) Color is used for the text in Inkscape, but the package 'color.sty' is not loaded}%
    \renewcommand\color[2][]{}%
  }%
  \providecommand\transparent[1]{%
    \errmessage{(Inkscape) Transparency is used (non-zero) for the text in Inkscape, but the package 'transparent.sty' is not loaded}%
    \renewcommand\transparent[1]{}%
  }%
  \providecommand\rotatebox[2]{#2}%
  \newcommand*\fsize{\dimexpr\f@size pt\relax}%
  \newcommand*\lineheight[1]{\fontsize{\fsize}{#1\fsize}\selectfont}%
  \ifx\svgwidth\undefined%
    \setlength{\unitlength}{1776.50569405bp}%
    \ifx\svgscale\undefined%
      \relax%
    \else%
      \setlength{\unitlength}{\unitlength * \real{\svgscale}}%
    \fi%
  \else%
    \setlength{\unitlength}{\svgwidth}%
  \fi%
  \global\let\svgwidth\undefined%
  \global\let\svgscale\undefined%
  \makeatother%
  \begin{picture}(1,0.30637726)%
    \lineheight{1}%
    \setlength\tabcolsep{0pt}%
    \put(0,0){\includegraphics[width=\unitlength,page=1]{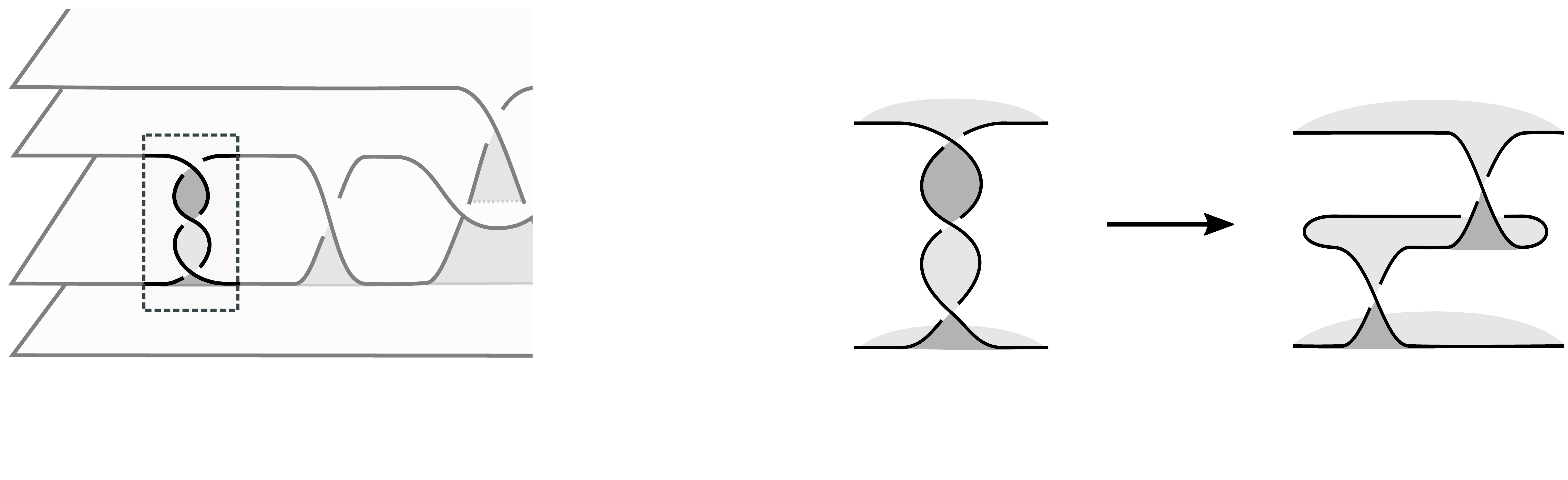}}%
    \put(0.14699207,0.00829697){\makebox(0,0)[lt]{\lineheight{1.25}\smash{\begin{tabular}[t]{l}(a)\end{tabular}}}}%
    \put(0.73011948,0.00829697){\makebox(0,0)[lt]{\lineheight{1.25}\smash{\begin{tabular}[t]{l}(b)\end{tabular}}}}%
  \end{picture}%
\endgroup%

\caption{Adding and manipulating a band with multiple positive twists.}\label{fig:twisted-band}
\end{figure}

Next, we recast the surfaces from Theorem~\ref{thm:ball} as positively braided surfaces.

\begin{thm}\label{thm:braided}
Fix the topological type $\mathcal{F}$ of any compact, connected, orientable surface with boundary other than the disk. There exist pairs of positively braided surfaces $F,F' \subset D^2 \! \times \! D^2$ of topological type $\mathcal{F}$ that are isotopic through homeomorphisms of $D^2 \! \times \! D^2$ (rel boundary) yet are not equivalent under diffeomorphisms of $D^2 \! \times \! D^2$.

Moreover, these surfaces may be chosen so that the double branched cover of $D^2 \! \times \! D^2$ over $F'$ contains a smoothly embedded 2-sphere of self-intersection $-2$, whereas the double branched cover of $F$ does not.
\end{thm}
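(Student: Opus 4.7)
The plan is to closely parallel the proof of Theorem~\ref{thm:ball}, verifying that each building block used there and each boundary-sum operation between them can be performed inside the category of positively braided surfaces in the bidisk. Theorem~\ref{thm:braid} already presents $D$ and $D'$ as braided surfaces defined by the quasipositive words $\beta$ and $\beta'$. The band $c$ used to form $A$ from $D$, after the isotopy of Figure~\ref{fig:braid-to-sym}, is a positively twisted band between adjacent strands, so attaching it corresponds to prepending a single positive generator $\sigma_2$ to $\beta$; this yields the braided realization of $A$ shown in Figure~\ref{fig:A-and-T}, and the analogous construction for $\beta'$ gives $A'$. The tori $T,T'$ are obtained by attaching one further band carrying several positive full twists which, as illustrated in Figure~\ref{fig:twisted-band}, decomposes into a word of conjugates of positive Artin generators; appending this to the annulus words gives positively braided realizations of $T$ and $T'$.

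Next, I would realize the accessory surfaces as positively braided and check that boundary sums of positively braided surfaces remain positively braided. Each of $A_0$ and $T_0$ admits a quasipositive factorization whose associated braided surface has the Stein double branched cover displayed in Figure~\ref{fig:more-plumbing}; in particular, $A_0$ can be realized on two strands as a positively twisted annulus and $T_0$ on three strands as a plumbing of two such annuli. Given positively braided $F_1,F_2$ defined by quasipositive $\beta_1 \in B_{n_1}$ and $\beta_2 \in B_{n_2}$, I form their boundary sum by shifting every index of $\beta_2$ by $n_1$ and then prepending a single positive generator $\sigma_{n_1}$ joining the two blocks; the resulting quasipositive word in $B_{n_1+n_2}$ defines a positively braided surface with Euler characteristic $\chi(F_1)+\chi(F_2)-1$ that is diffeomorphic rel boundary to the boundary sum $F_1 \natural F_2$. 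Iterating this construction on the building blocks above gives, for each prescribed topological type $\mathcal{F}$ other than the disk, a pair of positively braided surfaces $F$ and $F'$ realizing the boundary sums constructed in the proof of Theorem~\ref{thm:ball}.

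The topological isotopy from $F$ to $F'$ transfers from the isotopy produced in Theorem~\ref{thm:ball} via the standard identification of $D^2 \! \times \! D^2$ with $B^4$ after corner smoothing. The assertions about double branched covers then follow from the obstructive arguments already in place: the smooth $(-2)$-sphere produced in Proposition~\ref{prop:spheres} lies inside the branched cover of the building block $A'$ or $T'$ and hence in the cover of $F'$, while the Stein handle decomposition of the branched cover of $F$, combined with the Lisca--Mati\'c adjunction inequality (Theorem~\ref{thm:lisca-matic-2}), rules out any smoothly embedded $(-2)$-sphere exactly as in the proof of Theorem~\ref{thm:ball}. The most delicate step will be confirming that the braided boundary-sum construction faithfully models the topological boundary sum used there, so that the invariants of the branched cover are transported intact to the cover of the braided surface $F$.
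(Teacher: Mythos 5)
Your proposal follows the same blueprint as the paper: realize each building block from the proof of Theorem~\ref{thm:ball} as a positively braided surface, then check that the boundary-sum operation stays in the braided category, and transport the branched-cover obstructions across. The one imprecision worth flagging concerns bands carrying more than a single positive half-twist. You describe such a band as ``decomposing into a word of conjugates of positive Artin generators,'' and you claim $A_0$ can be realized ``on two strands as a positively twisted annulus.'' But a braided band in Rudolph's sense carries exactly one positive half-twist, so a band with several half-twists between two fixed strands cannot be rewritten as a quasipositive word without changing the braid index; indeed the only positively braided annulus on two strands is the Hopf annulus, whose double branched cover is the Euler number $-2$ (not $-4$) disk bundle over $S^2$, so it cannot be $A_0$. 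The point of Figure~\ref{fig:twisted-band} is precisely to \emph{insert an additional disk} between the two strands being joined and trade the multiply twisted band for two single-twist braided bands, increasing the number of strands by one. The same remark applies to the band added to $A$ (resp.\ $A'$) to form $T$ (resp.\ $T'$), and to the constructions of $A_0$ and $T_0$. With that correction your argument agrees with the paper's, including the boundary-sum step (your ``shift indices and prepend $\sigma_{n_1}$'' is the same as stacking and joining by a half-twisted band) and the transfer of the $(-2)$-sphere obstruction.
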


\begin{proof}
It suffices to show that the surfaces $F,F' \subset B^4$ with the topological type of $\mathcal{F}$ constructed in the proof of Theorem~\ref{thm:ball} can be realized as positively braided surfaces with the same boundary. We begin by considering  the annuli $A$ and $A'$. Each of these is obtained from a positively braided surface (namely $D$ or $D'$) by attaching a positively braided band (namely $\sigma_2$), hence are themselves positively braided; see Figure~\ref{fig:A-and-T} for a depiction of $A$ as a braided surface. 

For the tori $T$ and $T'$, we attach positively twisted bands to $A$ and $A'$, but these bands are \emph{not} braided. However, as illustrated in Figure~\ref{fig:twisted-band}, such a band can be realized by introducing an additional disk and two braided bands into the braided surface; here the additional disk lies between the two disks that are joined by the band in question, and this new disk passes behind all existing bands. It follows that $T$ and $T'$ are smoothly isotopic (rel boundary) to positively braided surfaces. 

There are several ways to show that $A_0$ and $T_0$ are smoothly isotopic to positively braided surfaces. For example, one may begin with the  positively braided disk $D_0$ associated to the braid $\sigma_1 \in B_2$ (built from two disks and a half-twisted band). To obtain $A_0$, attach a band with three positive half-twists  to $D_0$ on the left side of the original $\sigma_1$ band (similarly to Figure~\ref{fig:twisted-band}).  To obtain $T_0$, attach a second multiply-twisted band of the same form to the \emph{right} of the original $\sigma_1$ band. 

All of the remaining surfaces in the proof of Theorem~\ref{thm:ball} are obtained as boundary sums of the preceding building blocks. It is straightforward to see that boundary sums of positively braided surfaces can be realized as positively braided surfaces in the same way as a typical connected sum of braids: the two surfaces are stacked vertically and then joined by a positively half-twisted band attached between the bottommost disk of the upper surface and the topmost disk of the lower surface.
\end{proof}

Finally, we use the descriptions of $D$, $A$, and $T$ as braided surfaces to produce handle diagrams for the double branched covers $\Sigma(B^4,D)$, $\Sigma(B^4,A)$, and $\Sigma(B^4,T)$.

\begin{proof}[Proof of Lemma~\ref{lem:DBCs}]
This handle calculus is similar in spirit to that of Figures~\ref{fig:Dn-exterior}-\ref{fig:dbc-family}, so we relegate these additional (and rather large) figures to \S\ref{sec:handles} in the appendix. A handle diagram for the exterior of $A$ in $B^4$ is drawn and simplified in Figure~\ref{fig:exterior}. The handle diagram for the double branched cover $\Sigma(B^4,A)$ is then produced in Figure~\ref{fig:dbc}. The handle diagram for $\Sigma(B^4,T)$ is obtained similarly; the necessary modifications are sketched in Figure~\ref{fig:torus-dbc}.
\end{proof}

\vspace{-.35in}

\section{Exotically knotted holomorphic curves in $\boldsymbol{\mathbf{C}^2}$}\label{sec:plane}

A \emph{Fatou-Bieberbach domain} is a proper open subset $\Omega \subsetneq \cc^2$ that is biholomorphically equivalent to $\cc^2$. The use of these subsets to produce proper embeddings of complex curves into $\cc^2$ dates back to  Kasahara and Nishino (cf \cite{yanagihara,stehle}), who constructed the first proper holomorphic embeddings of the open unit disk  $\mathring{D}^2 \subset \cc$ into $\cc^2$.  They  located a Fatou-Bieberbach domain $\Omega \subset \cc^2$ and a complex curve $V \subset \cc^2$ such that one of the connected components of the intersection $V \cap \Omega$ is an open disk; this furnishes the desired embedding  of the open disk into $\cc^2$ via the biholomorphism $\Omega \cong \cc^2$.

While this is a powerful constructive technique, it reveals little about the topology of the resulting embedding. Closer to our needs is a beautiful incarnation of this strategy due to Baader, Kutzschebauch, and Wold \cite{bkw}, who showed that  proper holomorphic embeddings of the open disk into $\cc^2$ can be \emph{topologically knotted}, i.e., not  equivalent to $\cc \! \times \! 0$. Their disks' complements have $\pi_1 \not \cong \zz$, distinguishing them from  $\cc \! \times \! 0$.  
 Our strategy is similar, though care must be taken to preserve both the topological equivalence between our curves and the smooth obstruction distinguishing them.

As in \cite{bkw}, we will use the following special case of \cite[Theorem 1.1]{globevnik} to help us convert  compact pieces of algebraic curves in the bidisk into properly embedded holomorphic curves in $\cc^2$. We use $D^2_R$ to denote the disk of radius $R$ in $\cc$.

\begin{thm}[Globevnik]\label{thm:fb}
For any $R>1$, there exists a domain $\Omega \subset \cc^2$ and a biholomorphic map from $\Omega$ onto $\cc^2$ such that:
\begin{enumerate}[label=\normalfont(\roman*)]
\item $\Omega$ is contained in $\left\{ (z,w) \in \cc^2 : |z| < \max(R,|w|) \right\}$, and 

\item $\Omega \cap (\mathring{D}^2_R \! \times \!  \mathring{D}^2_R)$ is an arbitrarily small $C^1$-perturbation of $\mathring{D}^2_1 \! \times  \! \mathring{D}^2_R$.

\end{enumerate}
\end{thm}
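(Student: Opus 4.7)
The plan is to realize $\Omega$ as the basin of attraction at the origin of a holomorphic automorphism $F\in\operatorname{Aut}(\cc^2)$. Recall the classical theorem of Rosay and Rudin that the basin of attraction of an attracting fixed point of an element of $\operatorname{Aut}(\cc^n)$ is always biholomorphic to $\cc^n$; thus the task reduces to engineering an automorphism whose basin of $0$ has the shape prescribed by (i) and (ii).

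First, I would fix a diagonal linear contraction $h(z,w)=(\lambda z,\lambda w)$ with $|\lambda|$ small, so that $h$ maps the closed polydisc $\overline{P}$, where $P=\mathring{D}^2_1\times\mathring{D}^2_R$, strictly into its interior. On a polynomially convex (equivalently Runge) neighborhood $U\subset\cc^2$ of the union of $\overline{P}$ with a compact collar of the exterior region $E=\{|z|\ge\max(R,|w|)\}$, I would construct a holomorphic isotopy $\phi_t:U\to\cc^2$ from the identity to a biholomorphic embedding $\phi_1$ that coincides with $h$ on a slight shrink of $P$ and moves the collar of $E$ strictly outward across the hypersurface $\{|z|=\max(R,|w|)\}$. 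The existence of $\phi_t$ is a concrete interpolation problem on the $\mathcal{O}$-convex set $U$, where one can glue the linear contraction on $P$ to an outward push on the collar of $E$ using a holomorphic cutoff.

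Second, I would apply the Andersén--Lempert approximation theorem to uniformly approximate $\phi_1$ on $\overline{U}$ by a global automorphism $F\in\operatorname{Aut}(\cc^2)$, and invoke Cauchy estimates to upgrade the $C^0$-approximation to $C^1$-closeness on slightly smaller compacta. The contracting behavior of $F$ on $P$ (inherited from $h$) guarantees that a $C^1$-small perturbation of $P$ lies inside the basin $\Omega:=\{p:F^n(p)\to 0\}$, yielding (ii); meanwhile, the expanding behavior of $F$ along the collar of $E$ ensures that orbits starting near $E$ leave $U$ and cannot converge to $0$, so $\Omega$ is disjoint from $E$ and hence contained in $\{|z|<\max(R,|w|)\}$, yielding (i). Rosay--Rudin then provides the biholomorphism $\Omega\cong\cc^2$.

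The main obstacle is making the two competing behaviors --- contraction of $P$ toward the origin, and expulsion of a neighborhood of $E$ from the basin --- cohere on a single Runge domain, and then tracking the Andersén--Lempert error carefully enough to keep the basin boundary within $C^1$-distance of $\partial P$. This balancing is delicate because the closures of $P$ and $E$ meet along the torus $\{|z|=R,|w|=R\}$, so the isotopy $\phi_t$ must transition between contracting and expanding regimes across this intersection without losing holomorphicity or injectivity. Choosing $\lambda$ sufficiently small and taking the collar of $E$ sufficiently thin should provide enough buffer for the approximation error to fit inside the prescribed $C^1$-tolerance.
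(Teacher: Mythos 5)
Your proposal reaches for the right tools---Anders\'en--Lempert approximation, Rosay--Rudin, and Runge domains---but the scheme of taking $\Omega$ to be the basin of attraction of a \emph{single} automorphism $F$ cannot deliver part~(ii). The boundary of the basin of an attracting fixed point of a nonlinear automorphism of $\cc^2$ is generically not a $C^1$-hypersurface (for complex H\'enon maps with an attracting fixed point, for instance, it is the fractal Julia set $J^+$). Your argument establishes only two one-sided inclusions---a slight shrink of $P=\mathring{D}^2_1\times\mathring{D}^2_R$ lies in $\Omega$, and a collar of $E=\{|z|\geq\max(R,|w|)\}$ is disjoint from $\Omega$---and says nothing about where $\partial\Omega$ sits in the intermediate region $\{1<|z|<R,\ |w|<R\}$ or whether it is $C^1$ there. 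Statement~(ii) requires $\Omega\cap(\mathring{D}^2_R\times\mathring{D}^2_R)$ to actually \emph{equal} a $C^1$-small perturbation of $P$, and the paper further uses that the frontier $\overline{\Omega}\setminus\Omega$ is $C^1$-differentiable; your approach establishes neither.

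The paper offers no proof of its own---it cites \cite[Theorem~1.1]{globevnik} directly. Globevnik does not use a single basin: $\Omega$ is built from a \emph{sequence} of automorphisms, where at each stage Anders\'en--Lempert is used to push the excess part of a round ball away from the target shape while keeping the frontier $C^1$-close to the target hypersurface, with the compositions arranged to converge in $C^1$ on compacta. That iterative control is what delivers both the two-sided $C^1$-closeness and the differentiability of the frontier---features a one-shot basin argument cannot supply. If you want to salvage the basin idea, you would at minimum need an argument for why $\partial\Omega$ is smooth, and that is precisely where the single-automorphism approach breaks down.
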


Globevnik constructs  $\Omega$ so that its frontier $\overline{\Omega} \setminus \Omega$  is $C^1$-differentiable and  intersects $\mathring{D}^2_R \! \times\!  \mathring{D}^2_R$ in a $C^1$-small perturbation of $\partial {D}^2_1 \! \times \! \mathring{D}^2_R$. 

\begin{rem} Below, we will often be fixing a surface in $\cc^2$ and considering  its intersections   with a variety of domains and subspaces of $\cc^2$. For convenience, we take the following notational convention: Given subspaces $X$ and $Y$ of a topological space $Z$, we write $(X,Y)$ to mean the space-subspace pair $(X, Y \cap X)$.
\end{rem}

\begin{proof}[Proof of Theorem~\ref{thm:complex}]
For notational convenience, we will write $\dd$ for the closed unit bidisk $D^2_1 \! \times \! D^2_1$. By Theorem~\ref{thm:braided} and Theorem~\ref{thm:rudolph}, there are  smooth algebraic curves $V_0,V_1 \subset \cc^2$ whose intersections with the bidisk are positively braided surfaces $F_0= V_0 \cap \dd$ and $F_1 = V_1 \cap \dd$ of the desired topological type that satisfy the following:
\begin{enumerate}
\item [(i)] $F_0$ and $F_1$ are isotopic through homeomorphisms of $\dd$ that are smooth near $\partial \dd$, and each intermediate surface $F_t$ has braided boundary  in $\partial D^2 \! \times \! D^2 \subset \dd$;
\item [(ii)] the double branched cover of $(\dd,F_0)$ contains a smoothly embedded 2-sphere of self-intersection $-2$; and
\item [(iii)]  the double branched cover of $(\dd,F_1)$ contains no smoothly embedded 2-spheres of self-intersection $-2$.
\end{enumerate}

For convenience, we may smoothly extend each $F_t$ slightly beyond the unit bidisk $\dd$ to a larger bidisk $\dd_{+\epsilon}$; see the left side of Figure~\ref{fig:schematic}. Note that, since $F_0$ and $F_1$ are compact pieces of larger algebraic curves in $\cc^2$, we may choose their extensions to remain holomorphic. Moreover, if $\epsilon>0$ is chosen sufficiently small, then for each fixed $t \in [0,1]$ we may assume that the pairs $(\dd, F_t)$, $(\dd_{+\epsilon},F_t)$, and $(\dd_{-\epsilon},F_t)$ are diffeomorphic. (Of course, $F_t$ is not smooth for all $t \in (0,1)$, but nothing precludes us from comparing non-smooth surfaces under diffeomorphisms of their ambient spaces.)

\begin{figure}\center
\smallskip
\def\svgwidth{\linewidth}
\begingroup%
  \makeatletter%
  \providecommand\color[2][]{%
    \errmessage{(Inkscape) Color is used for the text in Inkscape, but the package 'color.sty' is not loaded}%
    \renewcommand\color[2][]{}%
  }%
  \providecommand\transparent[1]{%
    \errmessage{(Inkscape) Transparency is used (non-zero) for the text in Inkscape, but the package 'transparent.sty' is not loaded}%
    \renewcommand\transparent[1]{}%
  }%
  \providecommand\rotatebox[2]{#2}%
  \newcommand*\fsize{\dimexpr\f@size pt\relax}%
  \newcommand*\lineheight[1]{\fontsize{\fsize}{#1\fsize}\selectfont}%
  \ifx\svgwidth\undefined%
    \setlength{\unitlength}{2621.10830232bp}%
    \ifx\svgscale\undefined%
      \relax%
    \else%
      \setlength{\unitlength}{\unitlength * \real{\svgscale}}%
    \fi%
  \else%
    \setlength{\unitlength}{\svgwidth}%
  \fi%
  \global\let\svgwidth\undefined%
  \global\let\svgscale\undefined%
  \makeatother%
  \begin{picture}(1,0.23549089)%
    \lineheight{1}%
    \setlength\tabcolsep{0pt}%
    \put(0,0){\includegraphics[width=\unitlength,page=1]{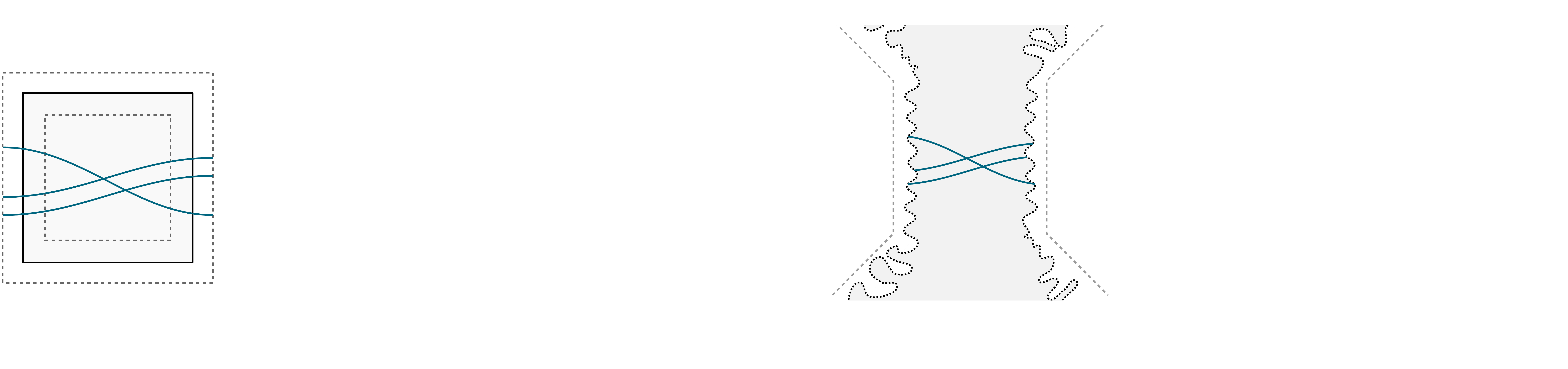}}%
    \put(0.02548226,-0.00050565){\makebox(0,0)[lt]{\lineheight{1.25}\smash{\begin{tabular}[t]{l}$F_t \subset \mathbb{D}$\end{tabular}}}}%
    \put(0,0){\includegraphics[width=\unitlength,page=2]{schematic.pdf}}%
    \put(0.98146962,0.14107106){\makebox(0,0)[lt]{\lineheight{1.25}\smash{\begin{tabular}[t]{l}$U$\end{tabular}}}}%
    \put(0,0){\includegraphics[width=\unitlength,page=3]{schematic.pdf}}%
    \put(0.3182817,-0.00059232){\makebox(0,0)[lt]{\lineheight{1.25}\smash{\begin{tabular}[t]{l}$W$\end{tabular}}}}%
    \put(0.82947434,-0.00041905){\color[rgb]{0,0,0}\makebox(0,0)[lt]{\lineheight{1.25}\smash{\begin{tabular}[t]{l}$\partial \bar \Omega=\bar \Omega \setminus \Omega$\end{tabular}}}}%
    \put(0.60966818,-0.00041905){\color[rgb]{0,0,0}\makebox(0,0)[lt]{\lineheight{1.25}\smash{\begin{tabular}[t]{l}$\Omega$\end{tabular}}}}%
    \put(0.14793276,0.17061813){\color[rgb]{0.6,0.6,0.6}\makebox(0,0)[lt]{\lineheight{1.25}\smash{\begin{tabular}[t]{l}$\mathbb{D}_{+\epsilon}$\end{tabular}}}}%
  \end{picture}%
\endgroup%

\caption{A schematic describing of the relationship between the surfaces $F_t$, the bidisk $\dd$, and the Fatou-Bieberbach domain $\Omega$.}\label{fig:schematic}
\end{figure}

We wish to find a Fatou-Bieberbach domain $\Omega \subset \cc^2$ that lies within the set
$$W =\left \{(z,w) \in \cc^2 : |z| \leq \max(1+\epsilon,|w|)\right\},$$
as depicted in the middle two diagrams of Figure~\ref{fig:schematic}. We will further demand that, near the unit bidisk $\dd$, the ``boundary'' of $\Omega$ (technically, its \emph{frontier}   $\partial \bar{\Omega} = \bar{\Omega} \setminus \Omega$) is sandwiched between $\partial \dd_{-\epsilon}$ and $\partial \dd_{+\epsilon}$, as depicted on the right side of Figure~\ref{fig:schematic}; this will provide control over the restricted surfaces $F_t \cap \Omega$. 

To that end,   fix a constant $R>1+\epsilon$. By Theorem~\ref{thm:fb}, we can find a Fatou-Bieberbach domain $\Omega \subset \cc^2$ contained in $\left\{(z,w) \in \cc^2 : |z| < \max(R,|w|)\right\}$ such that the subset
$$ \Omega_{|w|\leq R}=\Omega \cap \{(z,w) \in \cc^2 : |w| \leq R\}$$
is an arbitrarily small $C^1$-perturbation of $\mathring{D}_1^2 \! \times \! \mathring{D}^2_R$. Near the bidisk $\dd$, the frontier $\partial \bar{\Omega} = \bar{\Omega} \setminus \Omega$ consists of a $C^1$-regular solid torus
$$ U = \partial \bar{\Omega} \cap  \{(z,w) \in \cc^2 : |w| \leq 1\}$$ obtained as a small $C^1$-perturbation of the smooth solid torus $\partial D^2_1 \! \times \! D_1^2 \subset \dd$.  
By constructing $\Omega$ so that this perturbation is  sufficiently small, we may assume that $U$ lies in $\dd_{+\epsilon} \setminus \dd_{-\epsilon}$, as desired. As a final condition, we may further assume that  the $C^1$-perturbation is small enough to ensure that $F_t$  is transverse to the $C^1$-regular solid torus $U$ for all $t \in [0,1]$. This is possible because $F_t$ (which is  smooth near $|z|=1$) is  transverse to $\partial D^2_1 \! \times \! D^2_1 \subset \dd$ for all $t$, and $U$ is an arbitrarily small $C^1$-perturbation of $\partial D^2_1 \! \times \! D^2_1$. This final assumption on $\Omega$ ensures that the surface $F_t \cap \bar \Omega$ is properly embedded in $\bar \Omega$ for all $t \in [0,1]$, hence defines a family of isotopic surfaces in $\bar \Omega$.

We next show that the isotopy of surfaces $F_t \cap \Omega$ is induced by an ambient isotopy through homeomorphisms of $\Omega$. (Note that the ambient isotopy of $\dd$ --- or $\dd_{+\epsilon}$, technically --- need not carry $\Omega$ to itself, for example.) 
 We wish to apply the isotopy extension theorem to the family $F_t \cap \Omega$ produce an ambient isotopy through homeomorphisms of $\Omega$. However, $F_t \cap \Omega$ is not compact. To sidestep this, we  consider the compact surfaces $F_t \cap \bar \Omega$. Unfortunately, the closure $\bar \Omega$ is not guaranteed to be a manifold. Fortunately, each surface $F_t \cap \bar \Omega$ lies in the subset $$\bar \Omega_{|w| \leq 1} =\bar \Omega \cap \{(z,w) \in \cc^2 : |w| \leq 1\},$$ which is indeed a $C^1$-manifold (obtained as a $C^1$-perturbation of $\dd$). We may then apply the isotopy extension theorem  (e.g., \cite[Corollary~1.2]{edwards-kirby}) to extend the isotopy of surfaces $F_t \cap \bar \Omega$ to an isotopy through ambient homeomorphisms of $\bar \Omega_{|w| \leq 1}$. Note that these ambient homeomorphisms must carry the interior of  $\bar \Omega_{|w| \leq 1}$ to itself. Since $F_t$ lies inside the region $\{(z,w) \in \cc^2 : |w| < 1\}$, we may further assume that the isotopy fixes all points with $|w| \geq 1$. It follows that this isotopy of $\bar \Omega_{|w| \leq 1}$ can be extended by the identity to the rest of $\bar \Omega$. Restricting again to the interior, we obtain the desired isotopy of the pairs $(\Omega,F_t)$ through ambient homeomorphisms of $\Omega$.

Since $\Omega$ is biholomorphically equivalent to $\cc^2$, the surfaces $F_t \cap \Omega$ define a family of properly embedded surfaces in $\cc^2$ that are isotopic through ambient homeomorphisms. Moreover, these surfaces are embedded as holomorphic curves for $t=0$ and $t=1$. To show that these holomorphic curves in $\cc^2$ are not smoothly isotopic, it suffices to distinguish $(\Omega, F_0)$ and $(\Omega, F_1)$. To that end, we recall that $\Omega$ contains the shrunken bidisk $\dd_{-\epsilon}$. This implies that the double branched cover of $(\Omega,F_0)$ contains the double branched cover of $(\dd_{-\epsilon},F_0)$. By our choice of $\epsilon$ from above, the pairs $(\dd_{-\epsilon},F_0)$ and $(\dd,F_0)$ are diffeomorphic. By (ii) above, the double branched cover of $(\dd,F_0)$ contains a smoothly embedded 2-sphere of self-intersection $-2$. It  follows that the double branched cover of $(\Omega,F_0)$ contains a smooth 2-sphere of self-intersection $-2$.

It remains to show that the double branched cover of $(\Omega,F_1)$ contains no smooth 2-spheres of self-intersection $-2$. To this end, we recall that $\Omega$ lies inside  $$W = \left \{(z,w) \in \cc^2 :  |z| <  \max(1+\epsilon,|w|) \right\}.$$
Therefore the pair $(\Omega,F_1)$ includes into the pair $(W,F_1)$ and the double branched cover of $(\Omega,F_1)$ embeds smoothly in the double branched cover of $(W,F_1)$. It is straightforward to construct a diffeomorphism of pairs $(W,F_1) \cong (\dd_{+\epsilon},F_1)$ that is the identity on $\{|w| \leq 1\}$. From above, we have $(\dd_{+\epsilon},F_1) \cong (\dd,F_1)$. By (iii), the double branched cover of $(\dd,F_1)$ contains no smoothly embedded 2-spheres of self-intersection $-2$, so the same must be true of the double branched cover of $(W,F_1)$. This in turn implies that the same is true of the double branched cover of $(\Omega,F_1)$, as claimed. 
\end{proof}

\titleformat{\section}{\large\bfseries}{}{0pt}{\center }


\titlespacing{\section}{0pt}{*4}{*1.5}
\setcounter{section}{0}
\renewcommand{\thesection}{\Alph{section}}\setcounter{subsection}{1}
\titleformat{\subsection}[runin]{\bfseries}{}{0pt}{\thesection.\arabic{subsection} \ \ }

\vspace{-.35in}

\section{Appendix}

\subsection{Braid words.}\label{app:braid} We now verify a calculation from the proof of Theorem~\ref{thm:braid}.

\begin{lem}\label{lem:braid}
The following elements are equal in the 5-stranded braid group:
\begin{align*}
\beta&=(\sigma_{2}\sigma_{3}\sigma_{2}^{-1})(\sigma_1^{-2} \sigma_2 \sigma_3 \sigma_4^2 \sigma_3^{-1} \sigma_2 \sigma_3 \sigma_4^{-2}  \sigma_3^{-1} \sigma_2^{-1} \sigma_1^2)(\sigma_3^{-1} \sigma_2 \sigma_1 \sigma_2^{-1} \sigma_3)(\sigma_4^{-1} \sigma_3 \sigma_4)\\
\beta'&=\sigma_{2}(w \sigma_{2}^{-1}\sigma_{1}\sigma_{2}w^{-1})(w\sigma_{2}^{-1}\sigma_{3}\sigma_{1}\sigma_{2}\sigma_{1}^{-1}\sigma_{3}^{-1}\sigma_{2}w^{-1}) (w\sigma_{3}^2\sigma_{4}\sigma_{3}^{-2}w^{-1}),
\end{align*}
where $w = \sigma_3 \sigma_4^{-1} \sigma_1^{-1} \sigma_3^{-2} \sigma_2^{-1}   \sigma_1^{-1} \sigma_3^{-1}$.
\end{lem}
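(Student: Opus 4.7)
The assertion is a purely algebraic equality of two specific words in $B_5$, so the natural plan is to reduce both sides to a canonical form and compare. First I would expand $w$ and $w^{-1}$ throughout $\beta'$, producing a single unadorned word in $\sigma_1,\ldots,\sigma_4$; the adjacent $w^{-1} w$ occurrences that appear between the three outer conjugated factors already cancel and cut the length substantially. Then I would compute Garside's left greedy normal form $\Delta^k \cdot s_1 \cdots s_\ell$ (with $\Delta$ the positive half-twist in $B_5$ and each $s_i$ a permutation braid, successive pairs left-weighted) for each of $\beta$ and $\beta'$. Because this normal form is unique, the lemma reduces to equality of the two normal forms. As a preliminary sanity check, the abelianization (exponent sum) of both braids is~$4$ and both permute the strands identically, which is consistent but of course not sufficient.

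A more geometric alternative, which I find more illuminating than a blind reduction, is to harvest the isotopy already assembled in the proof of Proposition~\ref{prop:braid} (Figures~\ref{fig:big-iso-1}--\ref{fig:big-iso-2}), which carries one diagram of~$K$ to the other. A careful audit of that isotopy should confirm that throughout the moves the strands remain transverse to the braid axis, so no Markov stabilization is ever invoked. Each intermediate Reidemeister~III move then descends to a braid relation $\sigma_i \sigma_{i\pm 1}\sigma_i = \sigma_{i\pm 1}\sigma_i\sigma_{i\pm 1}$, and each disjoint-strand slide descends to a commutation $\sigma_i\sigma_j = \sigma_j\sigma_i$ for $|i-j|\geq 2$. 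Concatenating these identities yields an explicit derivation of $\beta = \beta'$ as a product of braid relations, which also documents \emph{which} braid relations are being used at each stage of the isotopy.

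The main obstacle is bookkeeping, not mathematics. The two words together contain on the order of sixty Artin generators after expansion of $w^{\pm 1}$, and the intermediate Garside reductions (or the linear chain of braid relations tracking the geometric isotopy) are long enough to make purely by-hand verification error-prone. Consequently, the most reliable realization of the first strategy is computer-assisted: input both words into a braid-group package (for instance the \texttt{BraidGroup} class in SageMath, or the \texttt{CHEVIE} routines in GAP), compute the left normal forms, and verify that they agree. That step is essentially mechanical; the conceptual content of the lemma, as the surrounding discussion makes clear, is that the quasipositive factorization in $\beta'$ genuinely does express the same braid that arises geometrically from the band picture in~$\beta$.
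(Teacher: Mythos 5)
Your first strategy (compare Garside left normal forms, with computer assistance) is sound and would settle the lemma if actually carried out; it is also genuinely different from what the paper does. The paper's proof is a fully explicit hand computation: a chain of roughly forty displayed equalities transforming the word $\beta$ into $\beta'$ one braid relation at a time, driven mainly by $\sigma_i\sigma_{i+1}\sigma_i^{-1}=\sigma_{i+1}^{-1}\sigma_i\sigma_{i+1}$, far commutation $\sigma_i\sigma_j=\sigma_j\sigma_i$ for $|i-j|\ge 2$, and strategic insertions of $1=\sigma_k^{-1}\sigma_k$, with the conjugating word $w=\sigma_3\sigma_4^{-1}x^{-1}$ emerging only at the very end after an intermediate substitution $x=\sigma_3\sigma_1\sigma_2\sigma_3^2\sigma_1$. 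Your normal-form plan buys certainty at the cost of opacity; the paper's derivation is longer but reveals how $w$ arises. The problem with your proposal as a proof, however, is that it never executes either computation: you assert feasibility and offer a sanity check on exponent sums, but the exponent-sum check is vacuous here (both words are quasipositive products of exactly four conjugated generators, so both have exponent sum $4$ by construction), and the permutation check is likewise consistent-but-weak. As written, the lemma remains unverified.

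Your second, geometric strategy has a more serious flaw. The isotopy assembled in Figures~\ref{fig:big-iso-1}--\ref{fig:big-iso-2} (for Proposition~\ref{prop:braid}) carries the \emph{closure} of $\beta$ to the symmetric band diagram of $K$ in Figure~\ref{fig:braid-to-sym}(a); it does not carry $\beta$ to $\beta'$ as braids, and the target diagram is not presented as a braid closure, so the intermediate stages need not remain transverse to a fixed braid axis. Even granting a braided isotopy, what you would obtain is an equality between $\beta$ and \emph{some} braid word read off the final diagram, with no mechanism for producing the specific conjugating word $w$ or the particular quasipositive grouping that makes $\beta'$ useful in the proof of Theorem~\ref{thm:braid}. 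That grouping is precisely the content of the lemma, and it is an algebraic fact about factorizations, not something the ambient isotopy hands you for free.
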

\begin{proof}
 In each step, we underline the symbols that are to be modified by a relation in the braid group (especially the useful relation $\sigma_i \sigma_{i+1} \sigma_i^{-1}= \sigma_{i+1}^{-1} \sigma_i \sigma_{i+1}$).
\smallskip
\begin{align*}
\beta &= ( \sigma_2  \sigma_3  \sigma_2^{-1}  )( \sigma_1^{-1}   \sigma_1^{-1}   \sigma_2  \sigma_3  \sigma_4  \sigma_4  \sigma_3^{-1}   \sigma_2  \sigma_3  \sigma_4^{-1}   \sigma_4^{-1}   \sigma_3^{-1}   \sigma_2^{-1}   \sigma_1  \sigma_1 )( \sigma_3^{-1}   \underline{\sigma_2  \sigma_1  \sigma_2^{-1}}  \sigma_3 )( \sigma_4^{-1}   \sigma_3  \sigma_4 )
\\
&=
( \sigma_2  \sigma_3  \sigma_2^{-1}  )( \sigma_1^{-1}   \sigma_1^{-1}   \sigma_2  \sigma_3  \sigma_4  \sigma_4  \sigma_3^{-1}   \sigma_2  \sigma_3  \sigma_4^{-1}   \sigma_4^{-1}   \sigma_3^{-1}   \sigma_2^{-1}   \sigma_1  \sigma_1 )(\underline{\sigma_3^{-1}   \sigma_1^{-1}}  \sigma_2  \underline{\sigma_1   \sigma_3)( \sigma_4^{-1}   \sigma_3  \sigma_4 )}
\\
&=
( \sigma_2  \sigma_3  \sigma_2^{-1}  )( \sigma_1^{-1}   \sigma_1^{-1}   \sigma_2  \sigma_3  \sigma_4  \sigma_4  \sigma_3^{-1}   \sigma_2  \sigma_3  \sigma_4^{-1}   \sigma_4^{-1}   \sigma_3^{-1}   \sigma_2^{-1}   \sigma_1  \sigma_1 )( \sigma_1^{-1}   \sigma_3^{-1}  \sigma_2  \sigma_3   )( \sigma_4^{-1}   \sigma_3  \sigma_4 )\sigma_1 
\\
&=
( \sigma_2  \sigma_3  \sigma_2^{-1}  )( \sigma_1^{-1}   \sigma_1^{-1}   \sigma_2  \sigma_3  \sigma_4  \sigma_4  \sigma_3^{-1}   \sigma_2  \sigma_3  \sigma_4^{-1}   \sigma_4^{-1}   \sigma_3^{-1}   \sigma_2^{-1}  \sigma_1  \underline{\sigma_1 )( \sigma_1^{-1}}   \sigma_3^{-1}  \sigma_2  \sigma_3  )( \sigma_4^{-1}   \sigma_3  \sigma_4 )\sigma_1 
\\
&=
( \sigma_2  \sigma_3  \sigma_2^{-1}  )( \sigma_1^{-1}   \sigma_1^{-1}   \sigma_2  \sigma_3  \sigma_4  \sigma_4  \sigma_3^{-1}   \sigma_2  \sigma_3  \sigma_4^{-1}   \sigma_4^{-1}   \sigma_3^{-1}   \sigma_2^{-1}  \sigma_1   )(  \sigma_3^{-1}  \sigma_2  \sigma_3   )( \underline{\sigma_4^{-1}   \sigma_3  \sigma_4} )\sigma_1
\\
&=
( \sigma_2  \sigma_3  \sigma_2^{-1}  )( \sigma_1^{-1}   \sigma_1^{-1}   \sigma_2  \sigma_3  \sigma_4  \sigma_4  \sigma_3^{-1}   \sigma_2  \sigma_3  \sigma_4^{-1}   \sigma_4^{-1}   \sigma_3^{-1}   \sigma_2^{-1}  \sigma_1   )( \underline{\sigma_3^{-1}  \sigma_2  \sigma_3 })( \sigma_3   \sigma_4  \sigma_3^{-1} )\sigma_1
\\
&=
( \sigma_2  \sigma_3  \sigma_2^{-1}  )( \sigma_1^{-1}   \sigma_1^{-1}   \sigma_2 \underline{\sigma_3  \sigma_4  \sigma_4  \sigma_3^{-1}}  \sigma_2  \underline{\sigma_3  \sigma_4^{-1}   \sigma_4^{-1}   \sigma_3^{-1}}   \sigma_2^{-1}  \sigma_1   )(  \sigma_2  \sigma_3  \sigma_2^{-1}   )( \sigma_3   \sigma_4  \sigma_3^{-1} )\sigma_1
\\
&=
( \sigma_2  \sigma_3  \sigma_2^{-1}  )( \sigma_1^{-1}   \sigma_1^{-1}   \sigma_2 \sigma_4^{-1}  \sigma_3  \sigma_3  \underline{\sigma_4  \sigma_2  \sigma_4^{-1}}  \sigma_3^{-1}   \sigma_3^{-1}   \sigma_4   \sigma_2^{-1}  \sigma_1   )(  \sigma_2  \sigma_3  \sigma_2^{-1}   )( \sigma_3   \sigma_4  \sigma_3^{-1} )\sigma_1
\\
&=
( \sigma_2  \sigma_3  \sigma_2^{-1}  )( \sigma_1^{-1}   \sigma_1^{-1}   \sigma_2 \sigma_4^{-1}  \sigma_3  \sigma_3   \sigma_2 \sigma_3^{-1}   \sigma_3^{-1}   \underline{\sigma_4   \sigma_2^{-1}}  \sigma_1   )(  \sigma_2  \sigma_3  \sigma_2^{-1}   )( \sigma_3   \sigma_4  \sigma_3^{-1} )\sigma_1
\\
&=
( \sigma_2  \sigma_3  \sigma_2^{-1}  )( \sigma_1^{-1}   \sigma_1^{-1}   \sigma_2 \sigma_4^{-1}  \sigma_3  \sigma_3   \underline{\sigma_2 \sigma_3^{-1}   \sigma_3^{-1}      \sigma_2^{-1} } \sigma_4 \sigma_1   )(  \sigma_2  \sigma_3  \sigma_2^{-1}   )( \sigma_3   \sigma_4  \sigma_3^{-1} )\sigma_1
\\
&=
( \sigma_2  \sigma_3  \sigma_2^{-1}  )( \sigma_1^{-1}   \sigma_1^{-1}   \sigma_2 \sigma_4^{-1}  \sigma_3  \underline{\sigma_3   \sigma_3^{-1} } \sigma_2^{-1}   \sigma_2^{-1}      \sigma_3  \sigma_4 \sigma_1   )(  \sigma_2  \sigma_3  \sigma_2^{-1}   )( \sigma_3   \sigma_4  \sigma_3^{-1} )\sigma_1
\\
&=
( \sigma_2  \sigma_3  \sigma_2^{-1}  )( \sigma_1^{-1}   \sigma_1^{-1}   \underline{\sigma_2 \sigma_4^{-1}}  \sigma_3   \sigma_2^{-1}   \sigma_2^{-1}      \sigma_3  \sigma_4 \sigma_1   )(  \sigma_2  \sigma_3  \sigma_2^{-1}   )( \sigma_3   \sigma_4  \sigma_3^{-1} )\sigma_1
\\
&=
( \sigma_2  \sigma_3  \sigma_2^{-1}  )( \sigma_1^{-1}   \sigma_1^{-1}    \sigma_4^{-1}  \underline{\sigma_2 \sigma_3   \sigma_2^{-1}}   \sigma_2^{-1}      \sigma_3  \sigma_4 \sigma_1   )(  \sigma_2  \sigma_3  \sigma_2^{-1}   )( \sigma_3   \sigma_4  \sigma_3^{-1} )\sigma_1
\\
&=
( \sigma_2  \sigma_3  \sigma_2^{-1}  ) \sigma_1^{-1}   (\underline{\sigma_1^{-1}    \sigma_4^{-1}  \sigma_3^{-1}} \sigma_2   \sigma_3   \sigma_2^{-1} \underline{\sigma_3  \sigma_4 \sigma_1}   )(  \sigma_2  \sigma_3  \sigma_2^{-1}   )( \sigma_3   \sigma_4  \sigma_3^{-1} )\sigma_1
\\
&=\hl{\sigma_{2}}\sigma_{3}\sigma_{2}^{-1}\sigma_{1}^{-1}(\sigma_{4}^{-1}\sigma_{3}^{-1}\sigma_{1}^{-1}\sigma_{2}\sigma_{3}\sigma_{2}^{-1}\sigma_{1}\sigma_{3}\sigma_{4}) (\sigma_{2}\sigma_{3}\sigma_{2}^{-1})\sigma_{3}\underline{\sigma_{4}\sigma_{3}^{-1}\sigma_{1}}
\\
&=\hl{\sigma_{2}}\sigma_{3} \underline{\sigma_{2}^{-1}\sigma_{1}^{-1}\sigma_{4}^{-1}}(\sigma_{3}^{-1}\sigma_{1}^{-1}\sigma_{2}\sigma_{3}\sigma_{2}^{-1}\sigma_{1}\sigma_{3})\sigma_{4}(\sigma_{2}\sigma_{3}\sigma_{2}^{-1})\sigma_{3}\sigma_{1}\sigma_{4}\sigma_{3}^{-1}
\\
&=\hl{\sigma_{2}}\underline{\sigma_{3}\sigma_{4}}^{-1}\sigma_{2}^{-1}\sigma_{1}^{-1}(\sigma_{3}^{-1}\sigma_{1}^{-1}\sigma_{2}\sigma_{3}\sigma_{2}^{-1}\sigma_{1}\sigma_{3})\sigma_{4}(\sigma_{2}\sigma_{3}\sigma_{2}^{-1})\sigma_{3}\sigma_{1} \underline{\sigma_{4}\sigma_{3}^{-1}}
\\
&=\hl{\sigma_{2}}(\sigma_{3}\sigma_{4}^{-1})\sigma_{2}^{-1}\sigma_{1}^{-1}(\sigma_{3}^{-1}\sigma_{1}^{-1} \underline{\sigma_{2}\sigma_{3}\sigma_{2}^{-1}} \sigma_{1}\sigma_{3})\sigma_{4}(\sigma_{2}\sigma_{3}\sigma_{2}^{-1})\sigma_{3}\sigma_{1}(\sigma_{4}\sigma_{3}^{-1})
\\
&=\hl{\sigma_{2}}(\sigma_{3}\sigma_{4}^{-1})\sigma_{2}^{-1}\sigma_{1}^{-1}(\sigma_{3}^{-1} \underline{\sigma_{1}^{-1}\sigma_{3}^{-1}} \sigma_{2} \underline{\sigma_{3}\sigma_{1}} \sigma_{3})\sigma_{4}(\sigma_{2}\sigma_{3}\sigma_{2}^{-1})\sigma_{3}\sigma_{1}(\sigma_{4}\sigma_{3}^{-1})
\\
&=\hl{\sigma_{2}}(\sigma_{3}\sigma_{4}^{-1})\sigma_{2}^{-1}\sigma_{1}^{-1}(\sigma_{3}^{-2}\sigma_{1}^{-1}\sigma_{2} \sigma_{1} \sigma_{3}^2)\sigma_{4}(\sigma_{2}\sigma_{3}\sigma_{2}^{-1})\sigma_{3}\sigma_{1}(\sigma_{4}\sigma_{3}^{-1})
\\
&=\hl{\sigma_{2}}(\sigma_{3}\sigma_{4}^{-1})\sigma_{2}^{-1}\sigma_{1}^{-1}\underline{\sigma_{3}^{-2} \sigma_{1}^{-1}} \sigma_{2} \underline{\sigma_{1} \sigma_{3}^2 \sigma_{4}} \sigma_{2}\sigma_{3}\sigma_{2}^{-1}\sigma_{3}\sigma_{1}(\sigma_{4}\sigma_{3}^{-1})
\\
&=\hl{\sigma_{2}}(\sigma_{3}\sigma_{4}^{-1})\sigma_{2}^{-1}\sigma_{1}^{-2} \sigma_{3}^{-2}  \sigma_{2} \sigma_{3}^2 \underline{\sigma_{4} \sigma_{1}}  \sigma_{2}\sigma_{3}\sigma_{2}^{-1}\sigma_{3}\sigma_{1}(\sigma_{4}\sigma_{3}^{-1})
\\
&=\hl{\sigma_{2}}(\sigma_{3}\sigma_{4}^{-1})\sigma_{2}^{-1}\sigma_{1}^{-2} \sigma_{3}^{-2}  \sigma_{2} \underline{\sigma_{3}^2 \sigma_{4} (\sigma_3^{-2}} \sigma_3^2 ) \sigma_{1}  \sigma_{2}\sigma_{3}\sigma_{2}^{-1}\sigma_{3}\sigma_{1}(\sigma_{4}\sigma_{3}^{-1})
\\
&=\hl{\sigma_{2}}(\sigma_{3}\sigma_{4}^{-1})\sigma_{2}^{-1}\sigma_{1}^{-2}\sigma_{3}^{-2}\sigma_{2}(\sigma_{3}^2 \sigma_{4}\sigma_{3}^{-2})\sigma_{3}^2\sigma_{1} \underline{\sigma_{2}\sigma_{3}\sigma_{2}^{-1}} \sigma_{3}\sigma_{1}(\sigma_{4}\sigma_{3}^{-1})
\\
&=\hl{\sigma_{2}}(\sigma_{3}\sigma_{4}^{-1})\sigma_{2}^{-1}\sigma_{1}^{-2}\sigma_{3}^{-2}\sigma_{2}(\sigma_{3}^2 \sigma_{4}\sigma_{3}^{-2}) \underline{\sigma_{3}^2 \sigma_{1}\sigma_{3} ^{-1}} \sigma_{2}\sigma_{3}\sigma_{3}\sigma_{1}(\sigma_{4}\sigma_{3}^{-1})
\\
&=\hl{\sigma_{2}}(\sigma_{3}\sigma_{4}^{-1})\sigma_{2}^{-1}\sigma_{1}^{-2}\sigma_{3}^{-2}\sigma_{2}(\sigma_{3}^2\sigma_{4}\sigma_{3}^{-2})\sigma_{3}\sigma_{1} \sigma_{2}\sigma_{3}^2\sigma_{1}(\sigma_{4}\sigma_{3}^{-1})
\end{align*}

Set $x=\sigma_{3}
\sigma_{1}\sigma_{2}\sigma_{3}^2\sigma_{1}$. Continuing from above, we have:
\begin{align*}
\cdots \ &=\hl{\sigma_{2}}(\sigma_{3}\underline{\sigma_{4}^{-1}) (\sigma_{2}}^{-1}\sigma_{1}^{-2}\sigma_{3}^{-2}\sigma_{2})(\sigma_{3}^2 \sigma_{4}\sigma_{3}^{-2})x(\sigma_{4}\sigma_{3}^{-1})
\\
&=\hl{\sigma_{2}}(\sigma_{3}\sigma_{4}^{-1})x^{-1} \underline{x}(\sigma_{2}^{-1}\sigma_{1}^{-2}\sigma_{3}^{-2}\sigma_{2})(\sigma_{3}^2 \sigma_{4}\sigma_{3}^{-2})x(\sigma_{4}\sigma_{3}^{-1})
\\
&=\hl{\sigma_{2}}(\sigma_{3}\sigma_{4}^{-1})x^{-1}(\sigma_{3}\sigma_{1}\sigma_{2}\sigma_{3}^2\underline{\sigma_{1})(\sigma_{2}^{-1}\sigma_{1}^{-1}}\sigma_{1}^{-1}\sigma_{3}^{-2}\sigma_{2})(\sigma_{3}^2\sigma_{4}\sigma_{3}^{-2})x(\sigma_{4}\sigma_{3}^{-1})
\\
&=\hl{\sigma_{2}}(\sigma_{3}\sigma_{4}^{-1})x^{-1}(\sigma_{3}\sigma_{1}\sigma_{2}\underline{\sigma_{3}^2} \sigma_{2}^{-1}\sigma_{1}^{-1}\sigma_{2}\sigma_{1}^{-1}\sigma_{3}^{-2}\sigma_{2})(\sigma_{3}^2 \sigma_{4}\sigma_{3}^{-2})x(\sigma_{4}\sigma_{3}^{-1})
\\
&=\hl{\sigma_{2}}(\sigma_{3}\sigma_{4}^{-1})x^{-1}(\sigma_{3}\sigma_{1}\sigma_{2}\sigma_{3}(\sigma_{2}^{-1}\sigma_{2})\sigma_{3}\sigma_{2}^{-1}\sigma_{1}^{-1}\sigma_{2}\sigma_{1}^{-1}\sigma_{3}^{-2}\sigma_{2})(\sigma_{3}^2 \sigma_{4}\sigma_{3}^{-2})x(\sigma_{4}\sigma_{3}^{-1})
\\
&=\hl{\sigma_{2}}(\sigma_{3}\sigma_{4}^{-1})x^{-1}(\sigma_{3}\sigma_{1}\underline{\sigma_{2}\sigma_{3}\sigma_{2}^{-1}}  \ \underline{\sigma_{2}\sigma_{3}\sigma_{2}^{-1}}\sigma_{1}^{-1}\sigma_{2}\sigma_{1}^{-1}\sigma_{3}^{-2}\sigma_{2})(\sigma_{3}^2 \sigma_{4}\sigma_{3}^{-2})x(\sigma_{4}\sigma_{3}^{-1})
\\
&=\hl{\sigma_{2}}(\sigma_{3}\sigma_{4}^{-1})x^{-1}(\sigma_{3}\sigma_{1}\sigma_{3}^{-1}\sigma_{2}\underline{\sigma_{3}\sigma_{3}^{-1}}\sigma_{2}\sigma_{3}\sigma_{1}^{-1}\sigma_{2}\sigma_{1}^{-1}\sigma_{3}^{-2}\sigma_{2})(\sigma_{3}^2 \sigma_{4}\sigma_{3}^{-2})x(\sigma_{4}\sigma_{3}^{-1})
\\
&=\hl{\sigma_{2}}(\sigma_{3}\sigma_{4}^{-1})x^{-1}(\underline{\sigma_{3}\sigma_{1}\sigma_{3}^{-1}}\sigma_{2}\sigma_{2}\sigma_{3}\sigma_{1}^{-1}\sigma_{2}\sigma_{1}^{-1}\sigma_{3}^{-2}\sigma_{2})(\sigma_{3}^2 \sigma_{4}\sigma_{3}^{-2})x(\sigma_{4}\sigma_{3}^{-1})
\\
&=\hl{\sigma_{2}}(\sigma_{3}\sigma_{4}^{-1})x^{-1}(\underline{\sigma_{3}\sigma_{3}^{-1}} \sigma_{1}\sigma_{2}\sigma_{2}\sigma_{3}\sigma_{1}^{-1}\sigma_{2}\sigma_{1}^{-1}\sigma_{3}^{-2}\sigma_{2})(\sigma_{3}^2 \sigma_{4}\sigma_{3}^{-2})x(\sigma_{4}\sigma_{3}^{-1})
\\
&=\hl{\sigma_{2}}(\sigma_{3}\sigma_{4}^{-1})x^{-1}(\sigma_{1}\sigma_{2}\sigma_{2}\underline{\sigma_{3}\sigma_{1}^{-1}} \sigma_{2}\sigma_{1}^{-1}\sigma_{3}^{-2}\sigma_{2})(\sigma_{3}^2 \sigma_{4}\sigma_{3}^{-2})x(\sigma_{4}\sigma_{3}^{-1})
\\
&=\hl{\sigma_{2}}(\sigma_{3}\sigma_{4}^{-1})x^{-1}(\sigma_{1}\underline{\sigma_{2}\sigma_{2}}\sigma_{1}^{-1}\sigma_{3}\sigma_{2}\sigma_{1}^{-1}\sigma_{3}^{-2}\sigma_{2})(\sigma_{3}^2 \sigma_{4}\sigma_{3}^{-2})x(\sigma_{4}\sigma_{3}^{-1})
\\
&=\hl{\sigma_{2}}(\sigma_{3}\sigma_{4}^{-1})x^{-1}(\sigma_{1}\sigma_{2}(\sigma_{1}^{-1} \sigma_{1})\sigma_{2}\sigma_{1}^{-1} \sigma_{3}\sigma_{2}\sigma_{1}^{-1}\sigma_{3}^{-2}\sigma_{2})(\sigma_{3}^2 \sigma_{4}\sigma_{3}^{-2})x(\sigma_{4}\sigma_{3}^{-1})
\\
&=\hl{\sigma_{2}}(\sigma_{3}\sigma_{4}^{-1})x^{-1}(\underline{\sigma_{1}\sigma_{2}\sigma_{1}^{-1}}  \ \underline{\sigma_{1}\sigma_{2}\sigma_{1}^{-1}} \sigma_{3}\sigma_{2}\sigma_{1}^{-1}\sigma_{3}^{-2}\sigma_{2})(\sigma_{3}^2 \sigma_{4}\sigma_{3}^{-2})x(\sigma_{4}\sigma_{3}^{-1})
\\
&=\hl{\sigma_{2}}(\sigma_{3}\sigma_{4}^{-1})x^{-1}(\sigma_{2}^{-1}\sigma_{1}\underline{\sigma_{2}\sigma_{2}^{-1}}\sigma_{1}\sigma_{2}\sigma_{3}\sigma_{2}\underline{\sigma_{1}^{-1}\sigma_{3}^{-2}}\sigma_{2})(\sigma_{3}^2 \sigma_{4}\sigma_{3}^{-2})x(\sigma_{4}\sigma_{3}^{-1})
\\
&=\hl{\sigma_{2}}(\sigma_{3}\sigma_{4}^{-1})x^{-1}(\sigma_{2}^{-1}\sigma_{1}\sigma_{1}\sigma_{2} \underline{\sigma_{3}\sigma_{2}\sigma_{3}^{-1}}\sigma_{1}^{-1}\sigma_{3}^{-1}\sigma_{2})(\sigma_{3}^2 \sigma_{4}\sigma_{3}^{-2})x(\sigma_{4}\sigma_{3}^{-1})
\\
&=\hl{\sigma_{2}}(\sigma_{3}\sigma_{4}^{-1})x^{-1}(\sigma_{2}^{-1}\sigma_{1}\sigma_{1}\underline{\sigma_{2}\sigma_{2}^{-1}}\sigma_{3}\sigma_{2}\sigma_{1}^{-1}\sigma_{3}^{-1}\sigma_{2})(\sigma_{3}^2 \sigma_{4}\sigma_{3}^{-2})x(\sigma_{4}\sigma_{3}^{-1})
\\
&=\hl{\sigma_{2}}(\sigma_{3}\sigma_{4}^{-1})x^{-1}(\sigma_{2}^{-1}\sigma_{1} \underline{\sigma_{1}\sigma_{3}}\sigma_{2}\sigma_{1}^{-1}\sigma_{3}^{-1}\sigma_{2})(\sigma_{3}^2 \sigma_{4}\sigma_{3}^{-2})x(\sigma_{4}\sigma_{3}^{-1})
\\
&=\hl{\sigma_{2}}(\sigma_{3}\sigma_{4}^{-1})x^{-1}(\sigma_{2}^{-1}\underline{\sigma_{1}\sigma_{3}}\sigma_{1}\sigma_{2}\sigma_{1}^{-1}\sigma_{3}^{-1}\sigma_{2})(\sigma_{3}^2 \sigma_{4}\sigma_{3}^{-2})x(\sigma_{4}\sigma_{3}^{-1})
\\
&=\hl{\sigma_{2}}(\sigma_{3}\sigma_{4}^{-1})x^{-1}(\sigma_{2}^{-1}\sigma_{1}(\sigma_2 \sigma_2^{-1}) \sigma_{3}\sigma_{1}\sigma_{2}\sigma_{1}^{-1}\sigma_{3}^{-1}\sigma_{2})(\sigma_{3}^2 \sigma_{4}\sigma_{3}^{-2})x(\sigma_{4}\sigma_{3}^{-1})
\\
&=\hl{\sigma_{2}}(\sigma_{3}\sigma_{4}^{-1})x^{-1}(\sigma_{2}^{-1}\sigma_{1}\underline{\sigma_{2})(\sigma_{2}^{-1}} \sigma_{3}\sigma_{1}\sigma_{2}\sigma_{1}^{-1}\sigma_{3}^{-1}\underline{\sigma_{2})(\sigma_{3}^2} \sigma_{4}\sigma_{3}^{-2})x(\sigma_{4}\sigma_{3}^{-1})
\\
&=\hl{\sigma_{2}}(\sigma_{3}\sigma_{4}^{-1})x^{-1}(\sigma_{2}^{-1}\sigma_{1}\sigma_{2})(x\sigma_{4}\sigma_{3}^{-1}\sigma_{3}\sigma_{4}^{-1}x^{-1})(\sigma_{2}^{-1}\sigma_{3}\sigma_{1}\sigma_{2}\sigma_{1}^{-1}\sigma_{3}^{-1}\sigma_{2})\\
& \qquad \qquad (x\sigma_{4}\sigma_{3}^{-1}\sigma_{3}\sigma_{4}^{-1}x^{-1}) (\sigma_{3}^2 \sigma_{4}\sigma_{3}^{-2})x(\sigma_{4}\sigma_{3}^{-1})
\\
&=\hl{\sigma_{2}}(\underline{\sigma_{3}\sigma_{4}^{-1}x^{-1}}\sigma_{2}^{-1}\sigma_{1}\sigma_{2}\underline{x\sigma_{4}\sigma_{3}^{-1}})(\underline{\sigma_{3}\sigma_{4}^{-1}x^{-1}}\sigma_{2}^{-1}\sigma_{3}\sigma_{1}\sigma_{2}\sigma_{1}^{-1}\sigma_{3}^{-1}\sigma_{2} \underline{x\sigma_{4}\sigma_{3}^{-1}}) \\
& \qquad \qquad (\underline{\sigma_{3}\sigma_{4}^{-1}x^{-1}}\sigma_{3}^2 \sigma_{4}\sigma_{3}^{-2}\underline{x\sigma_{4}\sigma_{3}^{-1}})
\end{align*}

Finally, plugging in $w=\sigma_3 \sigma_4^{-1} x^{-1}=\sigma_3\sigma_4^{-1} \sigma_1^{-1} \sigma_3^{-2} \sigma_2^{-1} \sigma_1^{-1} \sigma_3^{-1}$ yields
\begin{align*}
\quad \cdots \ &=\hl{\sigma_{2}}(w \sigma_2^{-1} \sigma_{1} \sigma_2 w^{-1})(w \sigma_2^{-1} \sigma_{3}\sigma_{1}\sigma_{2}\sigma_{1}^{-1}\sigma_{3}^{-1} \sigma_2 w^{-1}) ( w \sigma_{3}^2 \sigma_{4}\sigma_{3}^{-2} w^{-1}) = \beta' \hspace{3.75em}  \qedhere
\end{align*}
\end{proof}

\subsection{Additional handle diagrams.}\label{sec:handles}

Figures~\ref{fig:exterior}-\ref{fig:torus-dbc} below produce handle diagrams for the double branched covers of $B^4$ along the annulus $A$ and torus $T$ for Lemma~\ref{lem:DBCs}.

\smallskip

\begin{figure}[h!]\center
\def\svgwidth{.975\linewidth}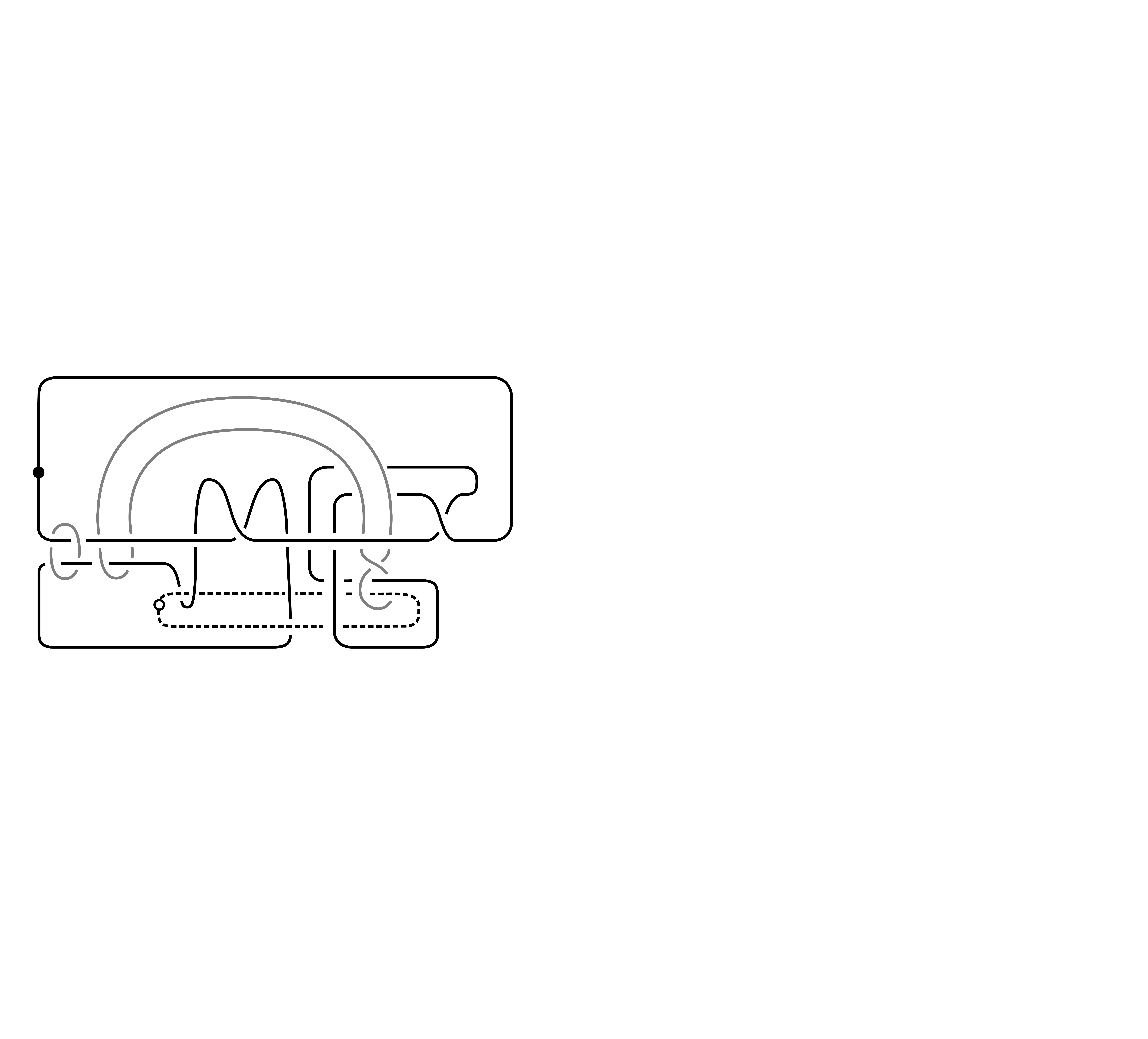
\caption{Simplifying a handle diagram for the exterior of the braided surface $A \subset B^4$.}\label{fig:exterior}
\end{figure}

\newgeometry{margin=1.1in, left=1.25in, right=1.15in}
\titleformat{\section}  {\normalfont\fontsize{12}{5}\bfseries}{}{\center 1em}{\center}

\begin{figure}[h!]\center
\smallskip
\def\svgwidth{.9\linewidth}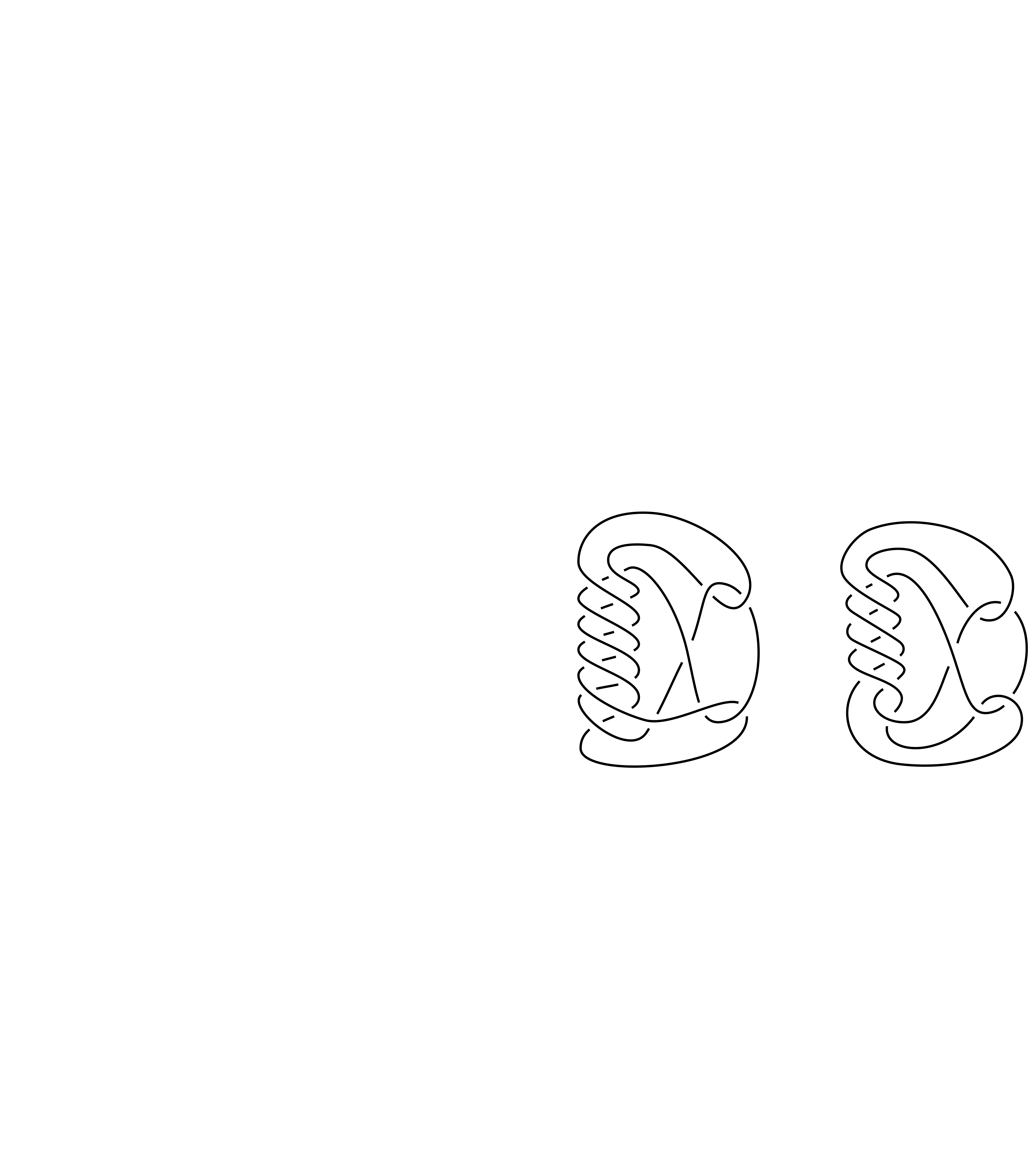
\caption{Simplifying the handle diagram for the branched double cover of $(B^4,A)$.}\label{fig:dbc}
\end{figure}

\begin{figure}\center
\smallskip
\def\svgwidth{.85\linewidth}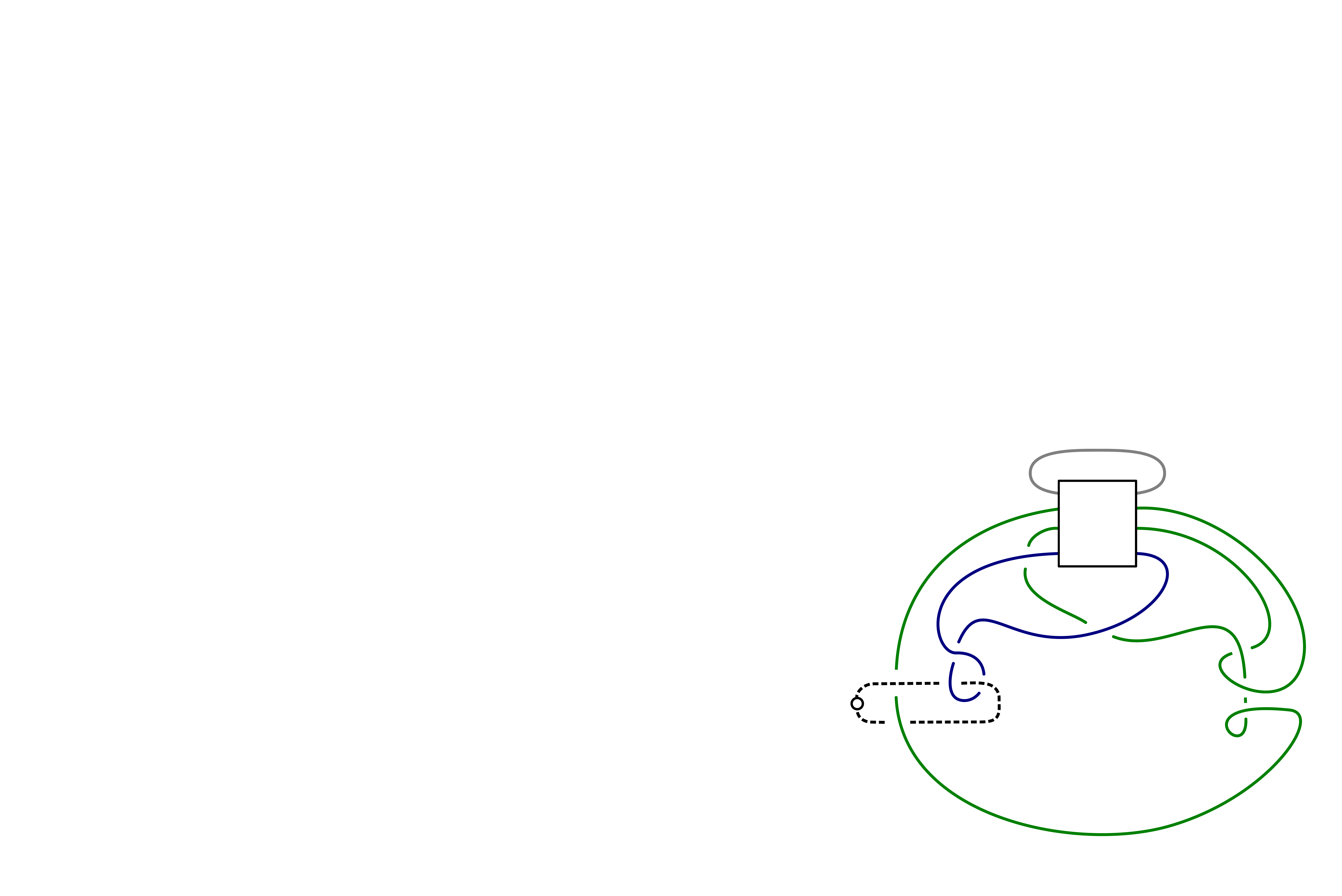
\caption{Parts (a) and (b) show handle diagrams for the exterior of the torus $T$ in $B^4$. Parts (c) and (d) depict steps in the simplification of the handle diagram for $\Sigma(B^4,T)$.}\label{fig:torus-dbc}
\end{figure}

\vspace{-.1in}

{\small \footnotesize \bibliographystyle{alphamod}
\bibliography{biblio}}

\newcommand{\etalchar}[1]{$^{#1}$}
\begin{thebibliography}{EMVHM11}

\bibitem[AK20]{akhmedov-katzarkov}
{\bfseries A~Akhmedov, L~Katzarkov}, \href
  {https://doi.org/10.1016/j.aim.2019.106920} {{\em Symplectic surgeries along
  certain singularities and new {L}efschetz fibrations}}, Adv. Math.,
  360:106920, 23, 2020.

\bibitem[Akb91]{akbulut:cork}
{\bfseries S~Akbulut}, \href
  {http://projecteuclid.org.proxy.bc.edu/euclid.jdg/1214446320} {{\em A fake
  compact contractible {$4$}-manifold}}, J. Differential Geom., 33(2):335--356,
  1991.

\bibitem[AM97]{akbulut-matveyev}
{\bfseries S~Akbulut, R~Matveyev}, {\em Exotic structures and adjunction
  inequality}, Turkish J. Math., 21(1):47--53, 1997.

\bibitem[AM98]{akbulut-matveyev:decomp}
{\bfseries S~Akbulut, R~Matveyev}, \href
  {https://doi.org/10.1155/S1073792898000245} {{\em {A convex decomposition
  theorem for 4-manifolds}}}, International Mathematics Research Notices,
  1998(7):371--381, 01 1998.

\bibitem[Aur15]{auroux:factorizations}
{\bfseries D~Auroux}, \href {https://doi.org/10.4310/JSG.2015.v13.n2.a1} {{\em
  Factorizations in {$SL(2,\Bbb Z)$} and simple examples of inequivalent
  {S}tein fillings}}, J. Symplectic Geom., 13(2):261--277, 2015.

\bibitem[BF98]{boileau-fourrier}
{\bfseries M~Boileau, L~Fourrier}, {\em Knot theory and plane algebraic
  curves}, Chaos Solitons Fractals, 9(4-5):779--792, 1998, Knot theory and its
  applications.

\bibitem[BK17]{baykur-korkmaz:exotic}
{\bfseries R I Baykur, M~Korkmaz}, \href
  {https://doi.org/10.1007/s00208-016-1466-2} {{\em Small {L}efschetz
  fibrations and exotic 4-manifolds}}, Math. Ann., 367(3-4):1333--1361, 2017.

\bibitem[BKW10]{bkw}
{\bfseries S~Baader, F~Kutzschebauch, E F Wold}, \href
  {https://doi.org/10.1515/CRELLE.2010.079} {{\em Knotted holomorphic discs in
  {$\Bbb C^2$}}}, J. Reine Angew. Math., 648:69--73, 2010.

\bibitem[BO01]{bo:qp}
{\bfseries M~Boileau, S~Orevkov}, {\em Quasi-positivit\'e d'une courbe
  analytique dans une boule pseudo-convexe}, C. R. Acad. Sci. Paris S\'er. I
  Math., 332(9):825--830, 2001.

\bibitem[BVHM18]{baykur-vhm}
{\bfseries R I Baykur, J~Van Horn-Morris}, \href
  {https://doi.org/10.1093/imrn/rnw281} {{\em Fillings of genus-1 open books
  and 4-braids}}, Int. Math. Res. Not. IMRN, (5):1329--1346, 2018.

\bibitem[CFHS96]{CFHS}
{\bfseries C~Curtis, M~Freedman, W~Hsiang, R~Stong}, \href
  {http://eudml.org/doc/144350} {{\em A decomposition theorem for h-cobordant
  smooth simply-connected compact 4-manifolds.}}, Invent. Math.,
  123(2):343--348, 1996.

\bibitem[CP19]{conway-powell}
{\bfseries A~Conway, M~Powell}, {\em Enumerating homotopy-ribbon slice discs},
  arXiv:1902.05321, 2019.

\bibitem[DHM20]{dhm:corks}
{\bfseries I~Dai, M~Hedden, A~Mallick}, {\em Corks, involutions, and heegaard
  floer homology}, arXiv preprint arXiv:2002.02326, 2020.

\bibitem[EK71]{edwards-kirby}
{\bfseries R D {Edwards}, R C {Kirby}}, {\em {Deformations of spaces of
  imbeddings.}}, {Ann. Math. (2)}, 93:63--88, 1971.

\bibitem[Eli90]{yasha:stein}
{\bfseries Y~Eliashberg}, {\em Topological characterization of {S}tein
  manifolds of dimension {$>2$}}, Internat. J. Math., 1(1):29--46, 1990.

\bibitem[Eli93]{yasha:knots}
{\bfseries Y~Eliashberg}, {\em {L}egendrian and transversal knots in tight
  contact $3$-manifolds}, In {\em Topological methods in modern mathematics
  (Stony Brook, NY, 1991)}, pages 171--193, Publish or Perish, Houston, TX,
  1993.

\bibitem[Eli95]{yasha:lagr-cyl}
{\bfseries Y~Eliashberg}, {\em Topology of {$2$}-knots in {${\bf R}\sp 4$} and
  symplectic geometry}, In {\em The Floer memorial volume}, volume 133 of {\em
  Progr. Math.}, pages 335--353, Birkh\"auser, Basel, 1995.

\bibitem[EMVHM11]{endo-mark-vhm}
{\bfseries H~Endo, T E Mark, J~Van Horn-Morris}, \href
  {https://doi.org/10.1112/jtopol/jtq041} {{\em Monodromy substitutions and
  rational blowdowns}}, J. Topol., 4(1):227--253, 2011.

\bibitem[FKV88]{fkv}
{\bfseries S M Finashin, M~Kreck, O Y Viro}, \href
  {https://doi.org/10.1007/BFb0082777} {{\em Nondiffeomorphic but homeomorphic
  knottings of surfaces in the {$4$}-sphere}}, In {\em Topology and
  geometry---{R}ohlin {S}eminar}, volume 1346 of {\em Lecture Notes in Math.},
  pages 157--198, Springer, Berlin, 1988.

\bibitem[FS99]{fs:symplectic}
{\bfseries R~Fintushel, R J Stern}, \href
  {http://projecteuclid.org.ezproxy.cul.columbia.edu/euclid.jdg/1214425276}
  {{\em Symplectic surfaces in a fixed homology class}}, J. Differential Geom.,
  52(2):203--222, 1999.

\bibitem[Glo98]{globevnik}
{\bfseries J~Globevnik}, \href {https://doi.org/10.1007/PL00004653} {{\em On
  {F}atou-{B}ieberbach domains}}, Math. Z., 229(1):91--106, 1998.

\bibitem[Gom]{gompf:planes}
{\bfseries R E Gompf}, {\em Topologically trivial proper 2-knots}, In
  preparation.

\bibitem[Gom93]{gompf:menagerie}
{\bfseries R E Gompf}, \href
  {http://projecteuclid.org.ezproxy.cul.columbia.edu/euclid.jdg/1214453429}
  {{\em An exotic menagerie}}, J. Differential Geom., 37(1):199--223, 1993.

\bibitem[Gom98]{gompf:stein}
{\bfseries R~Gompf}, {\em Handlebody construction of {S}tein surfaces}, Ann. of
  Math. (2), 148(2):619--693, 1998.

\bibitem[GS99]{GompfStipsicz4}
{\bfseries R E Gompf, A I Stipsicz}, {\em 4-manifolds and Kirby calculus.},
  number~20 in Graduate Studies in Mathematics, American Mathematical Society,
  1999.

\bibitem[Hay19]{hayden:cross}
{\bfseries K~Hayden}, \href {https://doi.org/10.1112/s0010437x19007012} {{\em
  Cross-sections of unknotted ribbon disks and algebraic curves}}, Compos.
  Math., 155(2):413--423, 2019.

\bibitem[Hay20]{hayden:disks}
{\bfseries K~Hayden}, {\em Exotically knotted disks and complex curves},
  arXiv:2003.13681, 2020.

\bibitem[HKK{\etalchar{+}}21]{hkkmps}
{\bfseries K~Hayden, A~Kjuchukova, S~Krishna, M~Miller, M~Powell, N~Sunukjian},
  {\em Brunnian exotic surface links in the 4-ball}, arXiv:2106.13776, 2021.

\bibitem[JMZ20]{jmz:exotic}
{\bfseries A~Juh{\'a}sz, M~Miller, I~Zemke}, {\em Transverse invariants and
  exotic surfaces in the 4-ball}, Geom.~Topol. (to appear), arXiv:2001.07191,
  2020.

\bibitem[LM98]{lisca-matic}
{\bfseries P~Lisca, G~Mati{\'c}}, {\em Stein $4$-manifolds with boundary and
  contact structures}, Topology Appl., 88:55--66, 1998.

\bibitem[Mat96]{matveyev}
{\bfseries R~Matveyev}, \href {https://doi.org/10.4310/jdg/1214459222} {{\em A
  decomposition of smooth simply-connected $h$-cobordant 4-manifolds}}, J.
  Differential Geom., 44(3):571--582, 1996.

\bibitem[Mon75]{montesinos}
{\bfseries J M Montesinos}, {\em Surgery on links and double branched covers of
  {$S^{3}$}}, In {\em Knots, groups, and {$3$}-manifolds ({P}apers dedicated to
  the memory of {R}. {H}. {F}ox)}, pages 227--259. Ann. of Math. Studies, No.
  84, 1975.

\bibitem[Neu89]{neumann}
{\bfseries W D Neumann}, \href {https://doi.org/10.1007/BF01393832} {{\em
  Complex algebraic plane curves via their links at infinity}}, Invent. Math.,
  98(3):445--489, 1989.

\bibitem[Oba20]{oba:surfaces}
{\bfseries T~Oba}, {\em Surfaces in the 4-disk with the same boundary and
  fundamental group}, Math. Res. Lett., 27(1):265--279, 2020.

\bibitem[Rud83a]{rudolph:braided-surface}
{\bfseries L~Rudolph}, {\em Braided surfaces and {S}eifert ribbons for closed
  braids}, Comment. Math. Helv., 58(1):1--37, 1983.

\bibitem[Rud83b]{rudolph:qp-alg}
{\bfseries L~Rudolph}, {\em Algebraic functions and closed braids}, Topology,
  22(2):191--202, 1983.

\bibitem[Ste72]{stehle}
{\bfseries J L Stehl\'{e}}, {\em Plongements du disque dans {$C^{2}$}}, In {\em
  S\'{e}minaire {P}ierre {L}elong ({A}nalyse), {A}nn\'{e}e 1970--1971}, pages
  119--130. Lecture Notes in Math., Vol. 275, 1972.

\bibitem[Uki16]{ukida}
{\bfseries T~Ukida}, \href {https://doi.org/10.1016/j.topol.2016.10.003} {{\em
  A genus zero {L}efschetz fibration on the {A}kbulut cork}}, Topology Appl.,
  214:127--136, 2016.

\bibitem[Wei72]{weinbaum}
{\bfseries C M Weinbaum}, \href
  {http://projecteuclid.org.ezproxy.cul.columbia.edu/euclid.ijm/1256052287}
  {{\em On relators and diagrams for groups with one defining relation}},
  Illinois J. Math., 16:308--322, 1972.

\bibitem[Yan67]{yanagihara}
{\bfseries N~Yanagihara}, {\em A remark on imbedding of the unit disk into
  {$C^{2}$}}, J. College Arts Sci. Chiba Univ., 5(1):21--24, 1967.

\end{thebibliography}

\end{document}